\documentclass[reqno,centertags,12pt]{amsart}
\usepackage{fullpage}
\usepackage{amsmath,amssymb,amscd}
\usepackage{color}
\usepackage{enumerate}
\usepackage{hyperref}
\usepackage{graphicx}
\usepackage{amsfonts}
\usepackage{fancyhdr}
\usepackage{indentfirst}



\newtheorem{theorem}{Theorem}[section]
\newtheorem{cor}[theorem]{Corollary}
\newtheorem{lemma}[theorem]{Lemma}

\newtheorem{prop}[theorem]{Proposition}

\theoremstyle{remark}
\newtheorem{remark}[theorem]{Remark}

\theoremstyle{definition}
\newtheorem{defin}[theorem]{Definition}

\newcommand{\Aa}{L^2_{x,t}}

\newcommand{\B}{X^{-s, 1+s}}
\newcommand{\DB}{X^{-s, s}}
\newcommand{\C}{X^{-3-4s, 2s+2}}
\newcommand{\DC}{X^{-3-4s, 2s+1}}
\newcommand{\K}{|D|^{-2s-2}X^{\frac{1}{4}, \frac{1}{4}}_{\tau=\frac{1}{4}\xi^3}}
\newcommand{\DK}{|\partial_t+\partial^3_x|^{-1}|D|^{-2s-2}X^{\frac{1}{4}, \frac{1}{4}}_{\tau=\frac{1}{4}\xi^3}}
\newcommand{\LP}{|D|L^\infty_{t,x}}
\newcommand{\DLP}{|D_t-D^3_x||D|L^\infty_{t,x}}
\newcommand{\DI}{|D|^{-s}|I|^{-\frac{1}{2}}L^2}
\newcommand{\Il}{|I_\lambda|}
\newcommand{\xll}{X^1[I_\lambda]}
\newcommand{\xaa}{X^1[I_\alpha]}
\newcommand{\xbb}{X^1[I_\beta]}
\newcommand{\xmm}{X^1[I_\mu]}

\newcommand{\zll}{Z[I_\lambda]}
\newcommand{\zaa}{Z[I_\alpha]}

\newcommand{\Imu}{|I_\mu|}
\newcommand{\ill}{I_\lambda}
\newcommand{\iaa}{I_\alpha}
\newcommand{\ibb}{I_\beta}

\newcommand{\imm}{I_\mu}
\newcommand{\threez}{-\frac{4}{3}(s+1)-\frac{1}{3} }
\newcommand{\sixz}{-\frac{2}{3}(s+1)+\frac{1}{3} }
\newcommand{\tail}{ \frac{2}{3}(s+1) }
\newcommand{\sm}{\sigma_m}
\newcommand{\sM}{\sigma_M}
\newcommand{\LL}{(\partial_t+\partial^3_{x})}
\newcommand{\ul}{u_\lambda}
\newcommand{\um}{u_\mu}
\newcommand{\xl}{X_{\lambda}}
\newcommand{\yl}{Y_{\lambda}}
\newcommand{\ua}{u_\alpha}
\newcommand{\vl}{v_\lambda}
\newcommand{\vm}{v_\mu}
\newcommand{\va}{v_\alpha}
\newcommand{\ub}{u_\beta}
\newcommand{\vb}{v_\beta}

\newcommand{\vr}{v_\gamma}
\newcommand{\Qm}{Q_{\sigma_m}}
\newcommand{\QM}{Q_{\gtrsim M}}
\newcommand{\pq}{P_\alpha Q_{\sigma}}

\newcommand{\rs}{\alpha^{2s+\frac{3}{2}}L^2_xL^{\infty}_t}
\newcommand{\elt}{\eta_{\lambda}(t)}
\newcommand{\eit}{\eta_{I}(t)}
\newcommand{\el}{\eta_\lambda}

\newcommand{\ea}{\eta_\alpha}

\newcommand{\ej}{\eta_J}
\newcommand{\emt}{\eta_\mu(t)}

\newcommand{\eb}{\eta_\beta}

\newcommand{\clx}{\chi^\lambda(x)}
\newcommand{\cljx}{\chi^\lambda_j(x)}

\newcommand{\cax}{\chi^\alpha(x)}
\newcommand{\txft}{L^2_xL^{\infty}_t}
\newcommand{\txt}{L^2_{t,x}}

\newcommand{\fxt}{L^{\infty}_{t,x}}
\newcommand{\fxtt}{L^{\infty}_xL^2_t}
\newcommand{\tr}{\tilde{R}}
\newcommand{\td}[1]{\tilde{#1}}
\newcommand{\ta}{{a}}

\newcommand{\lag}{\langle}
\newcommand{\rag}{\rangle}
\newcommand{\hu}{\hat{u}}
\newcommand{\ddt}{\frac{d}{dt}}
\newcommand{\beq}{\begin{eqnarray}}
\newcommand{\eeq}{\end{eqnarray}}
\newcommand{\ben}{\begin{eqnarray*}}
\newcommand{\een}{\end{eqnarray*}}
\newcommand{\xx}{X^s\cap X^s_{le}}
\newcommand{\ax}[1]{a(\xi_{#1})}
\newcommand{\xii}[1]{\xi_{#1}}
\newcommand{\lsm}{\lesssim}
\newcommand{\so}{-\frac{4}{5}}
\newcommand{\lh}{l^2_{\lambda}L^{\infty}_tH^s}

\newcommand{\leh}{l^{2}_{\lambda}l^{\infty}_j L^2_tH^{-s-\frac{3}{2}}}
\begin{document}


\title{A-priori bounds for  KdV equation below $H^{-\frac{3}{4}}$ }
\author{Baoping Liu}
\address{Department of Mathematics\\ University of California, Berkeley}
\email{baoping@math.berkeley.edu}
%
\subjclass[2000]{
35Q55. \newline\indent The author was supported in part by NSF
grant DMS0801261.}

\vspace{-0.3in}
\begin{abstract}
We consider the Korteweg-de Vries Equation (KdV) on the real line,
and prove that the smooth solutions satisfy a-priori local in time
$H^s$ bound in terms of the $H^s$ size of the initial data for
$s\geq\so$.
\end{abstract}

\maketitle \setcounter{tocdepth}{1}
\tableofcontents\section{Introduction} In this paper, we consider
the Korteweg-de Vries (KdV) equation,
\begin{equation}\left\{
\begin{array}{l} \partial_t u+\partial^3_x u+\partial_x (u^2)=0, \hspace{5mm} u:\mathbb{R}\times [0,T]\rightarrow \mathbb{R}, \\
u(0)=u_0 \in H^{s}(\mathbb{R}).
\end{array}\right.\label{kdv}\end{equation}
The equation is invariant respect to the scaling law
$$u(t,x)\rightarrow \lambda^2 u(\lambda^3 t,\lambda x),$$
which implies the scale invariance for initial data in
$\dot{H}^{-\frac{3}{2}}(\mathbb{R})$. It has been shown to be
locally well-posed (LWP) in $H^s$ for $s>-\frac{3}{4}$ by Kenig,
Ponce and Vega~\cite{KPV1} using a bilinear estimate. They
constructed solution on a time interval $[0,\delta]$, with
$\delta$ depending on $\|u_0\|_{H^s(\mathbb{R})}$. Later, the
result was extended to global well-posedness (GWP) for
$s>-\frac{3}{4}$ by Colliander, Keel, Staffilani, Takaoka and
Tao~\cite{CKSTT} using the I-method and almost conserved
quantities. See also the references \cite{BS}, \cite{Cohen},
\cite{Kato}, \cite{GTV}, \cite{KPV3}, \cite{Bou1}, \cite{KPV4} for
earlier results, and  \cite{CCT}, \cite{Guo}, \cite{Kishi} for
local and global results at the endpoint $s=-\frac{3}{4}$.

In ~{\cite{NTT}}, Nakanishi, Takaoka and Tsutsumi showed that the
essential bilinear estimate fails if $s<-\frac{3}{4}$. In fact,
Christ, Colliander and Tao~{\cite{CCT}} proved a weak form of
illposedness of the $\mathbb{R}$-valued KdV equation for
$s<-\frac{3}{4}$. Precisely, they showed that the solution map
fails to be uniformly continuous. See \cite{KPV2} for the
corresponding result for the $\mathbb{C}$-valued KdV equation.

On the other hand, the same question was posed in the periodic
setting ($u:\mathbb{T}\times [0,T]\rightarrow \mathbb{R}$), where
for $s\geq -1/2$, we have the results of LWP{\cite{KPV1}} and
GWP{\cite{CKSTT}}. Also, Kappeler and Topalov{~\cite{KaTo}}, using
the inverse scattering method~\cite{IST1}, proved GWP for inital
data in $H^\beta({\mathbb{T}}),\beta\geq -1$ in the sense that the
solution map is $C^0$ globally in time. Their proof depends
heavily  on the complete integrability of the KdV equation.
Interested readers are also referred to the work of Lax and
Levermore~\cite{Lax}, Deift  and Zhou~\cite{Deift1},
~\cite{Deift2}. There they used inverse scattering and
Riemann-Hilbert methods to study the semiclassical limit of the
completely integrable equations.

Concerning the KdV problem with initial data in
$H^{-1}(\mathbb{R})$, there has been several results recently. In
~\cite{Molinet1}, Molinet showed that the solution map can not be
continuously extended in $H^s{(\mathbb{R})}$ when $s<-1$.
In~\cite{KPST}, Kappeler, Perry, Shubin and Topalov showed that
given certain assumptions on the initial data $u_0\in H^{-1}$ ,
there exists a global weak solution to the KdV equation.
Buckmaster and Koch~\cite{BK} proved the existence of weak
solutions to KdV equation with $H^{-1}$ initial data. The approach
in ~\cite{KPST} and ~\cite{BK} both use the Miura transformation
to link the KdV equation to the mKdV equation, and the proofs
involve the  study of   Muria map, and the existence of weak $L^2$
solutions to mKdV or mKdV around a soliton.

In addition, there is an interesting result by Molinet and
Ribaud~\cite{MR1} on the initial-value problem for KdV-Burgers
equation.
\begin{equation}\left\{
\begin{array}{l} \partial_t u+\partial^3_x u+\partial_x (u^2)-\partial_x^2u=0, \hspace{5mm} t\in \mathbb{R}_{+},\hspace{2mm} x\in \mathbb{R}  \textnormal{  or  }  \mathbb{T},\\
u(0)=u_0 \in H^{s}(\mathbb{R}).
\end{array}\right.\label{kdvBurgers}\end{equation}
They showed that~(\ref{kdvBurgers}) is GWP in the space $H^s
(\mathbb{R})$ for $s \geq -1$, and ill-posed when $s <-1$ in the
sense that the corresponding solution map is not $C^2$. This is a
bit surprising since the initial-value problem for the Burgers
equation
\begin{equation}\left\{
\begin{array}{l} \partial_t u+\partial_x (u^2)-\partial_x^2u=0, \hspace{5mm} t\in \mathbb{R}_{+},\hspace{2mm} x\in \mathbb{R}  ,\\
u(0)=u_0 \in H^{s}(\mathbb{R}).
\end{array}\right.\label{Burgers}\end{equation}
is known to be LWP in the space $H^s (\mathbb{R})$ for $s \geq
-\frac{1}{2}$, and is ill-posed in $H^s (\mathbb{R})$ for $s <
-\frac{1}{2}$, see references~\cite{Bek} and ~\cite{Dix}. Notice
that the critical result for Burgers equation (\ref{Burgers})
agrees with prediction from usual scaling arguments. While
KdV-Burgers equation(\ref{kdvBurgers}) has no scaling invariance,
the sharp   result  by Molinet and Ribaud $s=-1$ is lower than
$s=-\frac{3}{4}$ for KdV, and $s=-\frac{1}{2}$ for Burgers
equation.

From all the  results mentioned before, it seems reasonable to
conjecture well-posedness  of KdV equation (\ref{kdv}) in
$H^s(\mathbb{R})$, in the range $-1\leq s<-\frac{3}{4}$, with some
continuous but not uniform continuous dependence on the initial
data.

Another related topic is  one dimensional cubic Nonlinear
Schr\"{o}dinger equation (NLS)
\begin{equation}\left\{
\begin{array}{l} i\partial_t u+\partial^2_x u\pm|u|^2u=0, \hspace{5mm}u:\mathbb{R}\times [0,T]\rightarrow \mathbb{C}, \\
u(0)=u_0 \in H^{s}(\mathbb{R}).
\end{array}\right.\end{equation}
The NLS has scaling invariance for initial data in
$\dot{H}^{-\frac{1}{2}}(\mathbb{R})$. It has GWP for initial data
in $u_0\in L^2$ and locally in time the solution has a uniform
Lipschitz dependence on the initial data in balls. But below this
scale, it has been shown that uniform dependence
fails~{\cite{CCT}},{\cite{KPV2}}. Koch and Tataru~\cite{KT2}
proved an a-priori local-in-time bounds for initial data in
$H^{s}, s\geq -\frac{1}{4}.$ Similar results were previously
obtained by Koch and Tataru~\cite{KT1} for $s\geq-\frac{1}{6}$,
and  by Colliander, Christ and Tao \cite{CCT2} for $s>
-\frac{1}{12}$. These a-priori estimates ensure that the equation
is satisfied in the sense of distributions even for weak limits,
and hence they also obtain existence of global weak solutions
without uniqueness.

Inspired by the results above, we look at the KdV equation with
initial data in $H^{s}$ when $s<-\frac{3}{4}$,  and prove that the
solution satisfies a-prori local in time $H^{s}$ bounds in terms
of the $H^{s}$ size of the initial data, for $s\geq \so$. The
advantage here is that we
 performed detailed analysis about the
interactions in the nonlinearity, which gives us better
understanding of the real obstruction towards establishing
wellposedness result in low regularity.

Our main result is as follows:
\begin{theorem}\label{aprioribound}
(A-priori bound) Let $s\geq \so$. For any $M
> 0$ there exists time $T$ and constant $C$, so that for any
initial data in $H^{-\frac{3}{4}}$ satisfying
\[\|u_0\|_{H^{s}}<M,\] there exists a solution $u \in
C([0,T],H^{-\frac{3}{4}})$ to the KdV equation which satisfies
\beq\label{unibound}\|u\|_{L^{\infty}_tH^{s}}\leq C
\|u_0\|_{H^{s}}.\eeq
\end{theorem}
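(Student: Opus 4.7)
The plan is to carry out a continuity/bootstrap argument on smooth solutions to establish the a-priori $H^s$ bound uniformly on a time interval $T=T(M)$, and then pass to the limit on approximations of $u_0$ in $H^{-\frac{3}{4}}$, using the Kenig--Ponce--Vega local well-posedness as a stability crutch to produce the claimed $C([0,T],H^{-\frac{3}{4}})$ solution. The core argument is frequency-localized: Littlewood--Paley decompose $u=\sum_\lambda \ul$ with $\ul=\pl u$, and associate to each dyadic frequency $\lambda$ a short time interval $\ill\subset[0,T]$ whose length $\Il$ shrinks with $\lambda$ according to the exponent $\tail$ that already appears in the function-space macros. On $\ill$ we measure $\ul$ in a tailored space $\xll$ that pairs an $X^{s,b}$-style weight with the modulation threshold $\sigma+\Il^{-1}$ together with the Strichartz, Kato smoothing, and maximal-function ingredients combined into $\sll$; the nonlinear output is placed in a dual space $\zll$. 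The bootstrap quantity is essentially $\bigl(\sum_\lambda \lambda^{2s}\|\ul\|_{\xll}^2\bigr)^{1/2}$, which dominates the $\lh$ norm and hence the target $L^\infty_tH^s$ norm.

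First I would set up the linear theory on $\ill$: energy, Strichartz, Kato smoothing in $\fxtt$, and maximal function bounds in $\txft$ for the free KdV group, all absorbed into $\sll$, together with the Duhamel estimate that converts $\zll$ data into $\xll$ gain. The precise choice of $\Il$ is dictated by requiring that modulations below $\Il^{-1}$ be recoverable by the free flow on the interval while simultaneously matching the scaling of the critical bilinear estimate described below.

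The heart of the proof is a family of frequency-localized bilinear estimates for the quadratic nonlinearity $\partial_x(u^2)$. Decomposing into low$\times$high, high$\times$low, high$\times$high$\to$low, and high$\times$high$\to$high interactions, the goal is an inequality of the form
\[\|\pl\partial_x(\ua\ub)\|_{\zll}\lsm c_{\alpha\beta\lambda}\|\ua\|_{\xaa}\|\ub\|_{\xbb}\]
with coefficients $c_{\alpha\beta\lambda}$ that sum through a Littlewood--Paley square function. Non-resonant pieces are handled by the standard KdV modulation lower bound $|\tau-\xi^3|\gtrsim\lambda_M^2\lambda_m$ combined with bilinear smoothing, while resonant high$\times$low contributions are managed via the maximal-function norm on the low input and smoothing on the high output.

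The main obstacle is the high$\times$high$\to$low interaction, which is precisely what obstructs the Kenig--Ponce--Vega bilinear estimate at $s=-\frac{3}{4}$. For output frequency $\lambda$ and two input frequencies of size $\mu\gg\lambda$, the modulation descends to $\sim\lambda\mu^2$, and the combined effect of the derivative loss and the $\lambda^s$ weight on the output produces a critical factor $\mu^{-s-\frac{3}{4}}$ that just fails to sum. Exploiting the short-interval modulation enhancement $\Il^{-1}$, the smoothing norm on the high inputs, and a maximal-type norm on the low output recovers a fractional power of $\mu$; the precise book-keeping of these exponents is what yields the threshold $s\geq\so$. Given the multilinear estimates, closing the bootstrap reduces to a quadratic inequality in the $\xll$-summed norm, and the KdV scaling invariance allows reduction of the general case to small data at the cost of shrinking the time of existence to $T=T(M)>0$.
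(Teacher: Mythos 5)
Your setup (Littlewood--Paley decomposition, frequency-dependent time intervals $I_\lambda$, tailored spaces combining $X^{s,b}$-type weights with Strichartz/smoothing/maximal norms, and a dual space for the nonlinearity) matches the paper's architecture, and the continuity argument is the right skeleton. But there is a genuine gap at the point where you claim that ``closing the bootstrap reduces to a quadratic inequality in the $\xll$-summed norm.'' It does not. The linear estimate necessarily has the form $\|u\|_{X^s}\lesssim\|u\|_{\lh}+\|\partial_x(u^2)\|_{Y^s}$, so the energy norm $\|u\|_{\lh}$ (which controls the target $L^\infty_tH^s$ bound) sits on the right-hand side and must be propagated by a separate mechanism; the bilinear estimate alone cannot produce it, and the naive energy identity $\frac{d}{dt}\langle a(D)u,u\rangle=\Lambda_3(M_3)$ does not close at this regularity. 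The paper devotes Sections 5 and 6 to exactly this: a variant of the I-method constructing modified energies $E_3=E_2+\Lambda_3(\sigma_3)$ and $E_4=E_3+\Lambda_4(\sigma_4)$ with slowly varying symbols $a\in S^s_\epsilon$, plus a local-energy-decay functional built from a commutator with $\phi_\lambda$, and then estimates the quartic error $\int_0^1\Lambda_4(M_4)\,dt$. Crucially, the threshold $s\geq-\frac{4}{5}$ comes from summability of that quartic error term (the $\{\mu,\mu,\lambda,\lambda\}$ and $\{\mu,\mu,\lambda,\alpha\}$ frequency configurations), \emph{not} from the high$\times$high$\to$low bilinear interaction as you assert: the paper's nonlinear estimate (Proposition 1.5) holds for all $s>-1$. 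Attributing the exponent $-\frac{4}{5}$ to the bilinear estimate is therefore not just a misplaced emphasis but a sign that the argument as you describe it would either close for all $s>-1$ (which is false) or not close at all.

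A secondary but still substantive issue: a clean bilinear bound $\|\pl\partial_x(\ua\ub)\|_{\zll}\lesssim c_{\alpha\beta\lambda}\|\ua\|_{\xaa}\|\ub\|_{\xbb}$ is actually false in one regime, namely a high-frequency low-modulation factor against a low-frequency high-modulation (elliptic-region) factor. The paper handles this by reiterating the equation (substituting the Duhamel expression for the low-frequency high-modulation piece), converting the offending bilinear term into trilinear ones; this is why the right-hand side of Proposition 1.5 carries both $\|u\|^2$ and $\|u\|^3$. You would also need the local energy space $X^s_{le}$ and its companion $Y^s_{le}$ to control interval summations in several high$\times$high cases. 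Without the modified-energy propagation and the reiteration step, the bootstrap cannot be closed.
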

Using the uniform bound (\ref{unibound}), together with the
uniform bound on nonlinearity
\[\|\chi_{[-T,T]}u\|_{\xx}+\|\chi_{[-T,T]}\partial_x(u^2)\|_{\xx}\lsm
\|u_0\|_{H^s},\] which come as a byproduct of our analysis in the
previous theorem, one may also prove the existence of weak
solution following a similar argument as in \cite{CCT2}.

\begin{theorem}
(Existence of weak solution) Let $s\geq \so$. For any $M
> 0$ there exists time $T$ and constant $C$, so that for any
initial data in $H^{s}$ satisfying
\[\|u_0\|_{H^{s}}<M,\] there exists a weak solution $u \in
C([0,T],H^s)\cap (\xx)$ to the KdV equation which satisfies
\[\|u\|_{L^{\infty}_tH^{s}}+\|\chi_{[-T,T]}u\|_{\xx}+\|\chi_{[-T,T]}\partial_x(u^2)\|_{\xx}\leq C \|u_0\|_{H^{s}}.\]
\end{theorem}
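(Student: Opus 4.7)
The plan is to implement a standard compactness scheme for producing weak solutions from the a-priori bound of Theorem~\ref{aprioribound}, in the spirit of~\cite{CCT2}. First, I would regularize the initial data by setting $u_0^n=P_{\leq n}u_0$, so that each $u_0^n$ is smooth, $\|u_0^n\|_{H^s}<M$ uniformly, and $u_0^n\to u_0$ in $H^s$. Classical theory produces a smooth global solution $u^n$ to~(\ref{kdv}) with data $u_0^n$. Applying Theorem~\ref{aprioribound} to each $u^n$, together with the nonlinearity bound stated immediately after that theorem, yields a common existence time $T=T(M)>0$ such that
\[
\|u^n\|_{L^\infty_t H^s([0,T])} + \|\chi_{[-T,T]}u^n\|_{\xx} + \|\chi_{[-T,T]}\partial_x((u^n)^2)\|_{\xx} \lsm M
\]
uniformly in $n\in\mathbb{N}$.

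Since the spaces on the left are duals of separable Banach spaces, Banach--Alaoglu lets me extract a subsequence (not relabeled) and a limit $u\in L^\infty_t H^s\cap\xx$ such that $u^n\stackrel{\ast}{\rightharpoonup}u$ and $\partial_x((u^n)^2)\rightharpoonup v$ in the corresponding weak topologies. Lower semicontinuity of the norms then gives the quantitative estimate required in the theorem. What remains is to verify that $v=\partial_x(u^2)$ in the sense of distributions, so that $u$ actually solves KdV weakly on $[0,T]$.

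This identification of the nonlinear limit is the main obstacle. Weak convergence alone is insufficient to pass to the limit in the quadratic term, and at regularity $s<-\tfrac{1}{2}$ the product is not even classically defined for a generic $H^s$ distribution. I would combine two ingredients. From the equation $\partial_t u^n=-\partial_x^3 u^n-\partial_x((u^n)^2)$, the time derivatives $\partial_t u^n$ are uniformly bounded in a fixed local negative Sobolev space on $[0,T]\times\mathbb{R}$; together with the uniform $L^\infty_t H^s$ bound in space and the compact embedding $H^s_{\mathrm{loc}}\hookrightarrow H^{s-\varepsilon}_{\mathrm{loc}}$, an Aubin--Lions type argument gives strong convergence $u^n\to u$ in $C([0,T];H^{s-\varepsilon}_{\mathrm{loc}})$ for every $\varepsilon>0$. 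The quadratic term is then handled by a frequency decomposition: splitting $u^n=P_{\leq N}u^n+P_{>N}u^n$ and expanding $(u^n)^2-u^2$ into low--low, low--high, and high--high pieces, the low-frequency interactions pass to the limit via the strong local convergence, while the remaining high-frequency pieces are controlled using the uniform $\xx$-bounds on $u^n$ and on $\partial_x((u^n)^2)$, via the same bilinear estimates that underlie Theorem~\ref{aprioribound}; letting $N\to\infty$ forces $v=\partial_x(u^2)$.

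Finally, strong $H^s$-time continuity of $u$ follows from Duhamel's formula: the free KdV group is strongly continuous on $H^s$, and the Duhamel integral of the nonlinearity inherits $H^s$-continuity from the uniform $\xx$-regularity of $\partial_x(u^2)$ through the linear estimate that underpins the $X^s$ theory. This upgrades the weak-$\ast$ limit to $u\in C([0,T],H^s)\cap\xx$ and completes the construction of a weak solution with the asserted bounds.
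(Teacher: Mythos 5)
Your proposal is correct and is precisely the argument the paper intends: the paper gives no detailed proof of this theorem, remarking only that the uniform bound of Theorem~\ref{aprioribound} together with the nonlinearity bound in $\xx$ allow one to follow the compactness scheme of~\cite{CCT2}. Your plan (regularize the data, apply the a-priori and $\xx$ bounds uniformly, extract weak limits, identify the quadratic term via strong local convergence combined with the uniform bilinear estimates, and recover time continuity from Duhamel) is exactly that scheme.
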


\begin{remark}{We can always rescale the initial data and hence
just need to prove the theorems in case $M\ll 1$.
 }\end{remark}
We begin    with a Littlewood-Paley   frequency decomposition of
the solution $u$,
\[u=\sum_{\lambda\geq 1, \text{ dyadic}}u_{\lambda}\]
Here we put all frequencies smaller than 1 into one piece.

For each $\lambda$ we also use a spatial partition of unity on the
$\lambda^{4s+5}$ scale
\[1=\sum_{j\in\mathbb{Z}}\chi_j^\lambda(x),\hspace{5mm} \chi_j^\lambda(x)=\chi(\lambda^{-4s-5}x-j),\]
with $\chi(x)\in C_0^\infty(-1,1)$.

In order to prove the theorem, we need Banach spaces
 \begin{itemize}
    \item $X^s$ and $X^s_{le}$ to measure the
regularity of the solution $u$. The first one measures dyadic
pieces of the solution on a frequency dependent timescale, and the
second one measures the spatially localized size of the solution
on unit time scale. They are similar to the ones used by Koch and
Tataru in~\cite{KT2}.
    \item The corresponding $Y^s$ and $Y^s_{le}$ to measure the regularity of the
    nonlinear term.
    \item Energy spaces
    \[\|u\|^2_{l^2_{\lambda}L^{\infty}_tH^s}=\sum_{\lambda\geq 1}\lambda^{2s}\|u_{\lambda}\|^2_{L^{\infty}_tL^2_x},\]
    and a local energy space
    \[\|u\|^2_{l^{2}_{\lambda}l^{\infty}_j L^2_tH^{-s-\frac{3}{2}}}=\sum_{\lambda\geq 1}\lambda^{-2s-5}\sup_j\|\chi_j^{\lambda}\partial_x u_{\lambda}\|^2_{L^2_{x,t}}.\]
\end{itemize}

With the spaces above, we will prove the following three
propositions.

The first one is about the linear equation.
\begin{prop} \label{linearprop}The following energy estimates hold
for \textnormal{(\ref{kdv}):} \beq\|u\|_{X^s}\lesssim
\|u\|_{l^2_{\lambda}L^{\infty}_tH^s}+\|\LL u\|_{Y^s},\eeq
\beq\|u\|_{X_{le}^s}\lesssim \|u\|_{l^{2}_{\lambda}l^{\infty}_j
L^2_tH^{-s-\frac{3}{2}}}+\|\LL u\|_{Y_{le}^s}.\eeq
\end{prop}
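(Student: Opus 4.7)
The plan is to prove both inequalities by combining Littlewood--Paley decomposition, time (and for the second bound spatial) localization matching the definitions of $X^s$ and $X^s_{le}$, and Duhamel's principle for the linear Airy equation $\LL v=f$. Both estimates are of the general form \emph{(solution norm)} $\lesssim$ \emph{(data/energy norm)} $+$ \emph{(forcing norm)}, and in each case the ``energy'' side is extracted from the homogeneous part of Duhamel's formula while the forcing side is extracted from the inhomogeneous part by the (dual) definition of $Y^s$, resp.\ $Y^s_{le}$.

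For the first estimate I write $u=\sum_\lambda u_\lambda$. Since $X^s$ is built by summing dyadic $\xll$-norms with weight $\lambda^{2s}$ on frequency-dependent subintervals $\ill$, it suffices to bound each $\|u_\lambda\|_{\xll}$. After inserting a smooth time cutoff $\elt$ adapted to $\ill$ and choosing $t_0\in\ill$, Duhamel's formula gives
\[\elt u_\lambda(t)=\elt\,e^{-(t-t_0)\partial^3_x}u_\lambda(t_0)+\elt\int_{t_0}^{t}e^{-(t-t')\partial^3_x}P_\lambda\LL u(t')\,dt'.\]
The homogeneous piece has $\xll$-norm controlled by $\|u_\lambda(t_0)\|_{L^2_x}$; averaging over $t_0\in\ill$ this is controlled by $\|u_\lambda\|_{L^\infty_tL^2_x}$, and after the $\lambda^{2s}$ summation this contributes $\|u\|_{\lh}$. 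The Duhamel piece, being $\LL^{-1}$ applied to the time-cut forcing $P_\lambda\LL u$, is bounded by the dyadic $Y^s$-contribution by the very definition of $Y^s$ as the space for which this inversion is continuous; summation in $\lambda$ yields $\|\LL u\|_{Y^s}$.

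For the second estimate I additionally introduce the spatial partition of unity $\{\cljx\}_j$ on scale $\lambda^{4s+5}$ and work with $\cljx u_\lambda$ on the unit time interval. The key identity is
\[\LL(\cljx u_\lambda)=\cljx\,\LL u_\lambda+[\partial_x^3,\cljx]\,u_\lambda.\]
Each derivative landing on $\cljx$ produces a factor $\lambda^{-4s-5}$, and this scale has been chosen precisely so that, weighted against the $\lambda^{-2s-5}$ factor built into $\leh$, the commutator is absorbable into the local energy norm after a standard fattening $\cljx\leadsto\tilde\chi^\lambda_j$ supported on a slightly larger cube to handle overlap with neighbors. Applying Duhamel to $\cljx u_\lambda$ and invoking Kato-type local smoothing for the Airy group, the homogeneous contribution is dominated by the weighted local quantity $\sup_j\|\cljx\partial_x u_\lambda\|_{L^2_{x,t}}$, i.e.\ by $\|u\|_{\leh}$, while the forcing piece is controlled by $\|\LL u\|_{Y^s_{le}}$ by duality.

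The main obstacle is the commutator term $[\partial_x^3,\cljx]u_\lambda$ in the local energy estimate together with the overlap bookkeeping. Verifying that the commutator gain $\lambda^{-4s-5}$ really dominates the frequency losses from derivatives on $u_\lambda$ in the regime $s\geq\so$, and iterating on fattened cutoffs so that the leftover error can be reabsorbed on the left-hand side, is the delicate step; everything else follows from the linear Airy propagator estimates and the duality definitions of the nonlinearity spaces $Y^s$ and $Y^s_{le}$.
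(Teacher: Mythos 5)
Your first estimate follows essentially the paper's route (Duhamel plus the characterization of the $X^{0,1}[I]$ norm), though you gloss over the one structural point that actually carries the proof: $Y_\lambda$ is a \emph{sum} of spaces, $\DI+DS$, not a dual space, so ``controlled by the definition of $Y^s$'' is not quite an argument. The paper splits $f_\lambda=f_{1,\lambda}+f_{2,\lambda}$, produces $v_\lambda\in S$ with $\LL v_\lambda=f_{2,\lambda}$, subtracts it, and applies the identity $\|\phi_\lambda\|_{X^1[I]}\approx\lambda^s|I|^{-\frac12}\|\phi_\lambda\|_{L^2_{t,x}[I]}+\lambda^s|I|^{\frac12}\|\LL\phi_\lambda\|_{L^2_{t,x}[I]}$ to the remainder; the $DS$ component is handled by fiat and the $L^2$ component by this identity. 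You should make that splitting explicit, since without it the inhomogeneous bound is circular.

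For the local energy estimate you take a genuinely different route. The paper localizes the \emph{data and forcing} with $\chi_k^\lambda$ and exploits the rapid off-diagonal decay of the kernel of $\chi_j^\lambda e^{t\partial_x^3}P_\lambda\chi_k^\lambda$ for $|t|\leq\lambda^{4s+3}$ (the time scale is chosen so that frequency-$\lambda$ waves travel only one spatial cell of size $\lambda^{4s+5}$), obtaining a $\langle j-k\rangle^{-N}$ estimate and summing in $k$. You instead localize the \emph{solution} and pay the commutator $[\partial_x^3,\cljx]u_\lambda$. Your approach can be made to work, and it sidesteps the kernel estimate entirely, but your accounting of the commutator is off in one respect: the worst term $\chi'\partial_x^2u_\lambda$ carries $\lambda^{-4s-5}\cdot\lambda^2=\lambda^{-4s-3}$, which exactly cancels the ratio $|J|=\lambda^{4s+3}$ between the forcing weight $|J|^{1/2}$ and the data weight $|J|^{-1/2}$ in $X^{0,1}[J]$. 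So the commutator contribution is of \emph{unit strength} relative to the local energy norm, not small; it cannot be ``reabsorbed into the left-hand side'' by iterating on fattened cutoffs. What saves you is that it is directly dominated by the right-hand side $\|u\|_{\leh}$ (a sup over $j$ of exactly the quantity you produce on the fattened cell), so no absorption is needed. Two further small corrections: the homogeneous contribution is controlled by replacing the fixed-time data norm with its time average over $J$ (the $X^{0,1}$ identity again), not by Kato local smoothing, which points in the opposite direction; and $\cljx u_\lambda$ is no longer sharply frequency-localized, which the paper handles by working with $P_\lambda(\chi_j^\lambda u_\lambda)$ and the fast decay of $\chi_k^\lambda P_\lambda\chi_j^\lambda$. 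With these repairs your argument closes, and the trade-off is clear: the commutator route avoids the propagator kernel bound at the cost of a borderline bookkeeping step, while the paper's route yields the stronger off-diagonal decay for free.
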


The second one controls the nonlinearity.
\begin{prop}\label{bilinearprop}
Let $s> -1$ and  $u\in \xx$ be  a solution to equation
\textnormal{(\ref{kdv})}, then \beq\|\partial_x(u^2)\|_{Y^s\cap
Y^s_{le}}\lesssim \|u\|^2_{X^s\cap X^s_{le}}+\|u\|^3_{X^s\cap
X^s_{le}}.\eeq
\end{prop}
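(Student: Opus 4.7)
The plan is to decompose $\partial_x(u^2)$ dyadically via Littlewood--Paley and reduce the bilinear bound, separately in $Y^s$ and in $Y^s_{le}$, to a case analysis in which the short-time structure of $X^s$ and the local smoothing component of $X^s_{le}$ cooperate. Writing $u=\sum_\lambda u_\lambda$ and expanding
\[
\partial_x(u^2) \;=\; \sum_{\lambda,\lambda_1,\lambda_2} P_\lambda\,\partial_x(u_{\lambda_1}u_{\lambda_2}),
\]
symmetry reduces us to three essential frequency configurations: (HH$\to$L) $\lambda\ll\lambda_1\sim\lambda_2$, (HH$\to$H) $\lambda\sim\lambda_1\sim\lambda_2$, and (HL$\to$H) $\lambda\sim\lambda_1\gg\lambda_2$. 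In each configuration I would further partition each factor in $x$ on the scale $\lambda_{\max}^{4s+5}$ using the cutoffs $\chi_j^\lambda$, matching the spatial localization to the frequency-dependent time slabs on which the short-time Bourgain $X^s$-norm is defined.

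On each slab I use $X^{s,b}$-duality to reduce the estimate to bilinear $L^2_{t,x}$ pairings for Airy packets, with the KdV resonance identity $\xi_1^3+\xi_2^3-(\xi_1+\xi_2)^3 = -3\xi_1\xi_2(\xi_1+\xi_2)$ driving the modulation analysis. In (HL$\to$H) the low factor is placed in $L^\infty_{t,x}$ via a Bernstein/local-energy embedding from $X^s_{le}$, while the high factor sits in $X^s$ and absorbs the $\partial_x$ harmlessly; here no resonance gain is needed. In (HH$\to$H) the resonance is as large as $\lambda^3$, so one of the three factors must carry large modulation and can be placed in $L^2_{t,x}$, leaving the remaining two to be paired by a bilinear Strichartz-type estimate that exploits the transversality of the dispersion curves at separated high frequencies.

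The (HH$\to$L) case is the real bottleneck. The resonance gain $3\xi_1\xi_2(\xi_1+\xi_2)\sim\lambda\lambda_1^2$ is too weak at small output frequency $\lambda$, and I would instead pair the two high factors using the local smoothing norm embedded in $X^s_{le}$, which is precisely calibrated to control $\|\chi_j^{\lambda_1}\partial_x u_{\lambda_1}\|_{L^2_{t,x}}$ via the $\ell^2_\lambda\ell^\infty_j L^2_tH^{-s-3/2}$ structure. The spatial partition guarantees that the $\ell^\infty_j$ and $\ell^2_{\lambda_1}$ summations are preserved through the bilinear operation, allowing one to close the estimate in $Y^s_{le}$ on the output side. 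The cubic term $\|u\|^3_{X^s\cap X^s_{le}}$ arises when, in transferring between $X^s$- and $Y^s$-type norms on extended time intervals, one must substitute the KdV equation $\partial_t u + \partial_x^3 u = -\partial_x(u^2)$ to handle time derivatives of temporal cutoffs; the resulting self-referential nonlinearity is absorbed by a bootstrap.

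I expect the main technical difficulty to lie in (HH$\to$L), specifically in the high-modulation subregion where one must avoid a logarithmic loss in the summation over $\lambda_1$ and reconcile the mismatch between the spatial slab sizes $\lambda_1^{4s+5}$ of the two high inputs and $\lambda^{4s+5}$ of the low output. This will require a square-function argument to collect the $j$-sums together with careful use of Cauchy--Schwarz in the $\ell^2_\lambda\ell^\infty_j L^2_tH^{-s-\frac{3}{2}}$ norm, and is the step where the analysis is genuinely sensitive to $s$; the broader range $s>-1$ in the statement reflects the fact that the bilinear estimate alone tolerates more than the linear energy propagation of Proposition~\ref{linearprop}.
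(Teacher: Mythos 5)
Your proposal has the right skeleton (dyadic decomposition, three frequency configurations, modulation analysis via the resonance identity, local smoothing for interval summation), but it misses the case that is actually the obstruction in the paper, and it misattributes the origin of the cubic term. The paper's spaces are sums, $X_\lambda[I]=X^1[I]+S[I]$, and the whole case analysis is organized by which summand each factor lies in; the genuinely delicate case is \emph{not} HH$\to$L but the high--low interaction $\lambda\gg\alpha$ in which the \emph{low}-frequency factor $v_\alpha$ carries very high modulation ($v_\alpha\in Z$, i.e. $|\tau-\xi^3|\gtrsim\alpha^3$) while the high-frequency factor $u_\lambda$ has low modulation ($u_\lambda\in X^1$, Case 3.3.2). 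Your treatment of HL$\to$H --- ``place the low factor in $L^\infty_{t,x}$ via Bernstein, no resonance gain needed'' --- fails exactly there: the $L^\infty$ (or $L^3$/$L^6$) control available on a $Z$-piece is too weak, no bilinear $L^2$ estimate closes (this is the failure recorded in Remark~\ref{remarkonSS}, estimate (\ref{l2ulbvaz})), and the resonance identity gives nothing because the output modulation need not be large when an input already carries it.

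The paper's resolution is Lemma~\ref{backflow}: one substitutes the equation into the bad piece, writing $Q_{\sigma\gtrsim\alpha^3}w_\alpha=\LL^{-1}\alpha P_\alpha Q_\sigma(w^2)$, decomposes the resulting quadratic expression into two structured terms $M_1,M_2$ plus a remainder $R$ controlled in $L^2_xL^\infty_t$, and then estimates the resulting \emph{trilinear} expressions $\lambda u_\lambda M_1$, $\lambda u_\lambda M_2$, $\lambda u_\lambda R$ one by one. This reiteration is the sole source of the cubic term $\|u\|^3_{X^s\cap X^s_{le}}$; it has nothing to do with time derivatives of temporal cutoffs, which is what your proposal claims. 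Without this step your argument cannot produce the correct right-hand side, and the HL$\to$H case does not close. (Two smaller points: the restriction $s>-1$ actually enters through the frequency summation in the HH$\to$L case with high output modulation, Case 1.2(a), not through a comparison with the linear estimate; and the HH$\to$L case you flag as the bottleneck is handled in the paper by the bilinear Strichartz estimate (\ref{L2XXHHL}) plus the local energy space for the interval summation, essentially as you describe, so that part of your plan is sound.)
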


Finally, to close the argument we need to propagate the energy
norms. \begin{prop} \label{energypropagation}Let $s\geq \so$ and
$u$ be a solution to the \textnormal{(\ref{kdv})} with
$$\|u\|_{l^2_{\lambda}L^{\infty}_tH^s}\ll 1.$$ Then we have the bound
for energy norm
\beq\label{energybound}\|u\|_{l^2_{\lambda}L^{\infty}_tH^s}\lesssim
\|u_0\|_{H^s}+\sum_{k=3}^6\|u\|^k_{X^s\cap X^s_{le}},\eeq and
respectively the local energy norm
\beq\label{localenergybound}\|u\|_{\leh} \lesssim
\|u_0\|_{H^s}+\sum_{k=3}^6\|u\|^k_{X^s\cap X^s_{le}}.\eeq
\end{prop}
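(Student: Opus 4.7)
The plan is to derive both bounds via a frequency-localized modified-energy method, in the spirit of the $I$-method and the normal-form approach that Koch and Tataru use in~\cite{KT2}. I would begin from the standard energy identity: applying $\pl$ to \eqref{kdv}, taking the $L^2_x$ inner product with $\ul$ and integrating in time gives
\begin{equation*}
\|\ul(t)\|_{L^2}^2 - \|\pl u_0\|_{L^2}^2 = -2\int_0^t\!\!\int \ul\,\pl\partial_x(u^2)\,dx\,dt',
\end{equation*}
the linear dispersive term dropping out as a pure $x$-derivative. Expanding the right-hand side in Fourier over $\xi_1+\xi_2+\xi_3=0$ produces a cubic form whose dangerous contributions come from the high-high-to-low interactions $\lambda_1\sim\lambda_2\gg\lambda$; below $s=-3/4$ these are not directly absorbable.

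To neutralise these interactions I would introduce a cubic correction
\begin{equation*}
E^{(3)}_\lambda(u) \;=\; \int_{\xi_1+\xi_2+\xi_3=0}\frac{m_\lambda(\xi_1,\xi_2,\xi_3)}{\xi_1^3+\xi_2^3+\xi_3^3}\,\hat u(\xi_1)\hat u(\xi_2)\hat u(\xi_3),
\end{equation*}
exploiting the KdV resonance identity $\xi_1^3+\xi_2^3+\xi_3^3=3\xi_1\xi_2\xi_3$ on $\xi_1+\xi_2+\xi_3=0$, which makes the divided symbol harmless away from the origin. The symbol $m_\lambda$ is chosen so that differentiating $E^{(3)}_\lambda$ along the \emph{linear} flow cancels exactly the resonant cubic contribution in $\frac{d}{dt}\|\ul\|_{L^2}^2$; the nonlinear part of $\partial_t u$ then leaves behind only a quartic remainder. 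If that residue is still marginal at the threshold $s=\so$, I would iterate with a quartic, and if necessary a quintic, correction so the error is pushed all the way to sextic order. The resulting modified energy $\td E_\lambda(u)=\|\ul\|_{L^2}^2 + E^{(3)}_\lambda + E^{(4)}_\lambda + E^{(5)}_\lambda$ should satisfy $\td E_\lambda(u)(t)-\td E_\lambda(u)(0)=\sum_{k=4}^6 R^{(k)}_\lambda(u)(t)$ with multilinear remainders $R^{(k)}_\lambda$; simultaneously one checks $|E^{(j)}_\lambda(u)|\lsm \lambda^{-2s}\|u\|^j_{\xx}$, so the smallness hypothesis $\|u\|_{\lh}\ll 1$ guarantees $\td E_\lambda\sim\|\ul\|_{L^2}^2$ and the modified energy can be inverted to recover \eqref{energybound}.

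The multilinear residues $R^{(k)}_\lambda$ I would estimate by Littlewood-Paley expanding each input, pairing adjacent frequencies through bilinear $L^2_{t,x}$ estimates of Bourgain-Strichartz and local-smoothing type, and controlling the remaining factors by the $\xx$-norm. Summing $\lambda^{2s}$-weighted, taking suprema in $t$ and extracting the $l^2_\lambda$ norm gives \eqref{energybound}. For \eqref{localenergybound} I would repeat the scheme after inserting the spatial cutoff $\cljx$ and forming the corresponding $\chi$-weighted modified energy: the identity then produces commutator terms from $[\pl,\cljx]$ and $[\partial^3_x,\cljx]$ which, thanks to the frequency-tuned cutoff scale $\lambda^{4s+5}$, are genuinely lower order and absorb into the $X^s_{le}$-component of the left-hand side; the nonlinear contribution is treated exactly as before, now producing the $\leh$ bound.

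The hard part will be estimating the 4-, 5- and 6-linear residues $R^{(k)}_\lambda$ uniformly down to $s=\so$. After dividing by $\xi_1^3+\xi_2^3+\xi_3^3$ and repeatedly substituting the equation, the symbols lose just enough derivatives that the coherent high-high-to-low cascades become borderline; closing the bound without a logarithmic loss will demand careful use of the local-smoothing component of $\xx$, the spatially-localized $L^\infty_xL^2_t$ pieces supported on the $\lambda^{4s+5}$-scale, and a case analysis based on the relative sizes of interacting frequencies and modulations — the same analysis that fixes $s\geq\so$ as the precise threshold of the method.
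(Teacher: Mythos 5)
Your overall strategy---frequency-localized modified energies with cubic (and higher) corrections built on the resonance identity $\xi_1^3+\xi_2^3+\xi_3^3=3\xi_1\xi_2\xi_3$---is the right family of ideas, and the paper does run a CKSTT-style correction scheme. But there are two genuine gaps.

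First, the passage from your per-frequency modified energies to the norm $\lh$ does not close as written. Your claimed bound $|E^{(j)}_\lambda(u)|\lsm \lambda^{-2s}\|u\|^j_{\xx}$ does not give $\td E_\lambda\sim\|\ul\|^2_{L^2}$ (the right-hand side need not be small compared with $\|\ul\|^2_{L^2}$, only compared with $\lambda^{-2s}$), and after multiplying by $\lambda^{2s}$ and summing over $\lambda$ it produces a divergent series $\sum_\lambda\|u\|^j_{\xx}$; you would need correction and remainder bounds that are $l^1_\lambda$-summable after the $\lambda^{2s}$ weighting, which is exactly the delicate point. The paper sidesteps this by working with a \emph{single} scalar energy $E_2(u)=\langle a(D)u,u\rangle$ for a symbol $a$ in the slowly-varying class $S^s_\epsilon$, adapted to a frequency envelope $\beta_\lambda$ of the data; running the correction once per choice of envelope peak $\lambda_0$ gives $\sup_t\lambda_0^s\|u_{\lambda_0}(t)\|_{L^2}\lsm\beta_{\lambda_0}^{1/2}(\cdots)$, and summing $\beta_{\lambda_0}$ recovers the $l^2_\lambda$-of-sup structure. (Relatedly, the paper stops at the cubic correction $\sigma_3$ and estimates the quartic error $\Lambda_4(M_4)$ directly---the powers $k=5,6$ on the right of \eqref{energybound} come from reiterating the equation inside the multilinear estimates, not from quartic or quintic energy corrections, so your $E^{(4)},E^{(5)}$ are unnecessary machinery.)

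Second, your plan for \eqref{localenergybound} misidentifies the mechanism. Inserting the cutoff $\cljx$ into the energy and treating $[\partial_x^3,\cljx]$ as a lower-order commutator to be absorbed cannot work: the local energy norm $\|\chi_j^\lambda\partial_x\ul\|_{\txt}$ gains a full derivative over the energy, and that gain comes precisely from the commutator. The paper uses a monotone weight $\phi_\lambda$ with $\phi_\lambda'=\psi_\lambda^2\ge 0$ and the indefinite form $\widetilde E_2(u)=\frac12\int(\phi\,a(D)+a(D)\phi)u\,u\,dx$, so that $\langle(a(D)\phi_x+\phi_x a(D))u_x,u_x\rangle$ appears with a sign in $\frac{d}{dt}\widetilde E_2$ and \emph{is} the local smoothing term $2\|\psi a(D)^{1/2}Du\|^2_{L^2}$ modulo controllable errors. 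Without this positive-commutator structure the left-hand side of \eqref{localenergybound} never appears.
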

We organize our paper as follows: In section~\ref{sec2}, we will
define the spaces $X^s, X^s_{le}$, respectively $Y^s, Y^s_{le}$,
and  establish the linear mapping properties in
Proposition~\ref{linearprop}. In section \ref{sec3} we discuss the
linear and bilinear Strichartz estimates for free solutions, and
collect some useful estimates related to our spaces. In
section~\ref{sec4} we control the nonlinearity as in Proposition
\ref{bilinearprop}. In  sections \ref{sec5}, \ref{sec6} we use a
variation of the I-method to construct a quasi-conserved energy
functional and compute its behavior along the flow, thus proving
Proposition~\ref{energypropagation}.

Now we end this section by showing that the three propositions
imply Theorem~\ref{aprioribound}.
\begin{proof}Since $u_0\in H^{-\frac{3}{4}}$,
we  can solve the equation iteratively to get a solution up to
time 1, which implies that $u\in \lh$ and also that $u\in\xx$,
because the space we use has the nesting property $X^{s_1}\subset
X^{s_2}, s_1<s_2$, same for $\lh$ and $X^s_{le}$.

Then we use a continuity  argument. Suppose $\epsilon$ is a small
constant and $\|u_0\|_{H^s{(\mathbb{R})}}< \epsilon$. Take a small
$\delta$, so that $\epsilon\ll\delta\ll 1$, denote
\[A=\{ T\in[0,1]; \hspace{1mm}\|u\|_{\lh([0,T]\times\mathbb{R})} \leq2\delta,\hspace{3mm}
\|u\|_{\xx([0,T]\times\mathbb{R})}\leq 2\delta\}\] and we just
need to prove $A=[0,1]$. Clearly A is not empty and $0\in A$. We
need to prove that it is closed and open.

From definition in the next section, we can see that the norms
used in $A$ are continuous with respect to $T$, so A is closed.

Secondly, if $T\in A$, we have by proposition
\ref{energypropagation}
\[\|u\|_{\lh([0,T]\times\mathbb{R})}\lsm \epsilon +\delta^3,\]
and by proposition \ref{linearprop} and \ref{bilinearprop}, we
have
\[\|u\|_{\xx([0,T]\times\mathbb{R})}\lsm \epsilon+\delta^2+\delta^3.\]
So by taking $\epsilon $ and $\delta$ sufficiently small, we can
conclude that
\[\|u\|_{\lh([0,T]\times\mathbb{R})} \leq \delta,\hspace{2mm}
\|u\|_{\xx([0,T]\times\mathbb{R})}\leq  \delta.\] Since the norms
are continuous with respect to  $T$, it follows that a
neighborhood of $T$ is in $A$. Hence we proved
Theorem~\ref{aprioribound}.
\end{proof}

\section {Function spaces}
\label{sec2} The idea here follows the work of Koch and
Tataru~\cite{KT1}\cite{KT2}. We begin with some heuristic
argument: If the initial data in (\ref{kdv}) has norm
$\|u_0\|_{H^{-\frac{3}{4}}}\leq 1$, then the equation can be
solved iteratively up to time 1. Now when taking the same problem
with initial data $u_0\in H^{s}, s<-\frac{3}{4}$, localized at
frequency $\lambda$, the initial data will have norm
$\|u\|_{H^{-\frac{3}{4}}}\leq \lambda^{-s-\frac{3}{4}} $. Now if
we rescale it  to have $H^{-\frac{3}{4}}$ norm 1, we  see that the
evolution will still be described by linear dynamics on time
intervals of size $\lambda^{4s+3} $. 
So we decompose our solution into frequency pieces
$u=\sum_{\lambda\geq 1}u_{\lambda}$ and measure each piece
uniformly in size $\lambda^{4s+3} $ time intervals.

Another important idea is to look at waves of frequency $\lambda$
travelling with speed $\lambda^2$, so for time $\lambda^{4s+3} $,
it travels in spatial region of size $\lambda^{4s+5}$. So we also
decompose the space into a grid of size $\lambda^{4s+5}$ by using
the partition of unity
\[1=\sum_{j\in \mathbb{Z}}\chi_j^{\lambda}(x).\]
$\chi_j^{\lambda}(x)$ is defined as before, and it's easy to see
that the spatial scales increase with $\lambda$.

Bourgain's $X^{s,b}$ spaces are defined by
\[\|u\|^2_{X^{s,b}_{\tau=\xi^3}}=\int |\hat{u}(\tau,\xi)|^2(1+|\xi|
^2)^s(1+|\tau-\xi^3|^2)^b d\tau d\xi.\] We will use a modified
version of it on frequency or modulation dyadic pieces.

We start with spatial Littlewood-Paley decomposition,
\[u=\sum_{\lambda\geq 1 ~ \textnormal{dyadic} }P_{\lambda}u=\sum_{\lambda\geq 1
~ \textnormal{dyadic} }u_{\lambda}.\]

Also, we will use  the decomposition with respect to modulation
$|\tau-\xi^3|$
\[1=\sum_{\lambda\geq 1 ~ \textnormal{dyadic} }Q_{\lambda}.\] Both
decompositions are inhomogeneous, and uniformly bounded on
$X^{s,b}$ spaces.

Denote $\eta_I{(t)}$ as sharp time cutoff with respect to any time
interval $I$. Let $I_\lambda$   be a time interval of size
$\lambda^{4s+3}$, then we  use
 $\elt$ or $\el$  as a simplified notation for $\eta_{I_\lambda}(t)$.
And  $\chi^\lambda(x)$ is the smooth space cutoff with respect to
spatial intervals of size $\lambda^{4s+5}$ as before.

Define $|D|^{\alpha}$  to be the multiplier operator with Fourier
multiplier $|\xi|^{\alpha}$. We use the convention that $f\in
|D|^{-s} X \Leftrightarrow
\|f\|^2=\sum\lambda^{2s}\|f_\lambda\|^2_X<\infty$ in our
definitions.
\begin{defin}The spaces we use contain the following elements:
\begin{enumerate}[(i)]
\item Given an interval $I=[t_0,t_1]$, we define the space
\[\|\phi\|^2_{X^{0,1}[I]}=\|\phi(t_0)\|_{L^2}^2+|I|\|(\partial_t+\partial^3_{x})\phi\|_{L^2[I]}^2,\]
\[\|\phi\|^2_{X^1[I]}=\sum_{\lambda}\lambda^{2s}\|\phi_{\lambda}\|_{X^{0,1}[I]}^2.\]
$X^1[I]$ is used to control the low modulation part of the
solution in a classical space, which is extendable on the real
line. \item We use sums of spaces, i.e.
$\|u\|_{A+B}=\inf\{\|u_1\|_A+\|u_2\|_B, u=u_1+u_2\}$ to define
\[Z=(X^{-3-4s,2s+2}_{\tau=\xi^3}+\K)\cap \LP.\]
 $Z$
will always be used for very high modulations ($\geq |\xi|^3$),
i.e. in what are called the elliptic region.

\item The space $S$ is defined by putting high and low modulation
in different spaces.
\[\|u_{\lambda}\|_{S}=\lambda^{3s+\frac{3}{2s}+\frac{11}{2}}\|Q_{\sigma\leq\lambda^{4+\frac{3}{2s}} }\ul\|_{\Aa}+\|Q_{\lambda^{4+\frac{3}{2s}}\leq\sigma \leq\frac{1}{10}
\lambda^3}u_{\lambda}\|_{X^{-s,1+s}_{\tau=\xi^3}}+\|Q_{\geq\frac{1}{10}
\lambda^3}u_{\lambda}\|_{Z}.\] The good thing here is space $S$ is
stable with respect to sharp time truncations, the $L^2$ structure
deals with the tails when multiplying by a time-interval cutoff.
\\ In particular, we have
\[\|\elt\ul\|_{S}\lesssim\|\ul\|_{S}.\]

\item Let $X_\lambda[I]=X^1[I]+S[I]$. Now we can define $X^s$ norm
in a time interval $I$ by measuring the dyadic parts of $u$ on
small frequency-dependent time scales
\[\|u\|^2_{X^s[I]}=
\sum_{\lambda\geq 1}\sup_{|J|=\lambda^{4s+3}, J\subset I} \|
\eta_{J}(t)u_\lambda\|_{X_{\lambda}[J]}^2,\] $X^s_{le}$ measures
the spatially localized size of the solution on the unit time
scale
\[\|u\|_{X^s_{le}[I]}^2=\sum_{\lambda\geq 1}\sup_j\sum_{|J|=\lambda^{4s+3}, J\subset
I}\|\chi^{\lambda}_j(x)\eta_{J}(t)u_\lambda\|_{X_{\lambda}[J]}^2.\]
\item Correspondingly, we have the space $Y^s$ and $Y^s_{le}$
\[\|u\|^2_{Y^s[I]}=
\sum_{\lambda\geq 1}\sup_{|J|=\lambda^{4s+3}, J\subset I} \|
\eta_{J}(t)u_\lambda\|_{Y_{\lambda}[J]}^2,\]
\[\|u\|_{Y^s_{le}[I]}^2=\sum_{\lambda\geq 1}\sup_j\sum_{|J|=\lambda^{4s+3}, J\subset
I}\|\chi^{\lambda}_j(x)\eta_{J}(t)u_\lambda\|_{Y_{\lambda}[J]}^2.\]
Here \[Y_\lambda[I]=|D_x|^{-s}|I|^{-\frac{1}{2}}L^2+DS[I],\] $DS
=\{f= (\partial_t+\partial^3_{x})u;\hspace{1mm} u\in S \}$ with
the induced norm and $DS[I]=\{f|_{I}, f\in DS\}$.

Through our paper, we will mostly drop the interval $I$ in the
notation if $I=[0,1]$.
\end{enumerate}
\end{defin}

\begin{remark} \label{functionspaceremark}We look at each of the spaces in detail.
\begin{enumerate}
\item \label{remarkonx} $X^1[I]$ is not stable with respect to
sharp time truncation as it would cause jumps at both ends. Also
in order to talk about modulation, we need to extend functions so
that they are defined on the real line. To fix the problem, we
define
\[\|\phi\|^2_{X^{0,1}_I}=\|\phi(t_0)\|_{L^2}^2+|I|\|(\partial_t+\partial^3_{x})\phi\|_{\txt}^2,\]
\[\|\phi\|^2_{X^1_I}=\sum_{\lambda}\lambda^{2s}\|\phi_{\lambda}\|_{X^{0,1}_I}^2.\]
Now take any function  $u\in X^1[I]$, denote
$u_{E}=\theta(t)\widetilde{u}$, where $\widetilde{u}$ is the
extension of $u$ by free solutions with matching data at both ends
and $\theta(t)$ is a smooth cutoff on a neighborhood of $I$.
Clearly, $\|u\|_{X^1[I]}=\|u_E\|_{X^1_I}$, and when we talk about
function   $u\in X^1[I]$, we always mean $u_E$.

While $S[I]$ is stable with sharp time cutoff, $DS[I]$ is not. We
can extend functions in $S[I]$ by 0 outside the interval. And from
the definition, functions in  $DS[I]$ always come from  interval
restriction of functions in $DS$, which are defined on the real
line. \item The space $X^1[I]$ is compatible with  solutions to
the homogeneous equation. Namely for any smooth time cutoff
$\eta(t)$, we can prove
\[\|\eta (t)e^{t\partial^3_{x}}u_{0}\|_{X^1[I]}\lesssim \|u_{0}\|_{H^s},\]
It is also  compatible with energy estimates
\[\|u\|_{L^{\infty}_t(I;H^s)}\lesssim \|u\|_{X^1[I]}.\]


\item In our paper,  we will ignore the subscript notation
$\tau=\xi^3$ in the $X^{s,b}_{\tau=\xi^3}$ space except for the
special curve $\tau=\frac{1}{4}\xi^3$ which arises when two high
frequency wave interact and generate an almost equally high
frequency. \item \label{modulationmatch}Since we are using sums of
spaces, it is interesting to compare the norms of these spaces. We
note the following facts by Bernstein inequality.
\[\left\{
\begin{array}{ll}
    \|\ul\|_{X_{\lambda}[I_\lambda]}\approx \|\ul\|_{X^1[I_\lambda]}, & \mbox{when $|\tau-\xi^3|\lesssim \lambda^{4+\frac{3}{2s}}$,} \\
    \|\ul\|_{Z}\approx \|\ul\|_{\K \cap \LP}, & \mbox{when $|\tau-\frac{1}{4}\xi^3|\leq \frac{1}{10}\lambda^3 $,} \\
    \|\ul\|_{Z}\approx \|\ul\|_{\C\cap\LP}\approx\|\ul\|_{\B}, & \mbox{when $|\tau-\xi^3|\approx\frac{1}{10}\lambda^3 $.}
\end{array}
\right.\]

The $X^1$ and $S$ norm balance at modulation $|\tau-\xi^3|\approx
\lambda^{4+\frac{3}{2s}}$, which is also where we split $S$ into
the $L^2$ structure and $\B$. Hence whenever we split  into an
$X^1$ and an $S$ part, we always assume the $S$ part have
modulation larger than $\lambda^{4+\frac{3}{2s}}$ (which is larger
than $\lambda^2$). The same applies for $\DI$ and $DS$.

The third equality is because   when modulation is around
$\frac{1}{10}\lambda^3$, the $Z$ norm is in fact $\C\cap\LP$.
Using Bernstein, we can see that it matches with $\B$.

\end{enumerate}

\end{remark}
Now let us prove Proposition \ref{linearprop}.
\begin{proof}  It suffices to prove the Proposition for a fixed dyadic frequency $\lambda$.
We restrict our attention to time interval $J=[a,b]$ with size
$\lambda^{4s+3}$,  and we need to prove that
\beq\label{proveenergyestimate}\|\ul\|_{\xl[J]}\lesssim
\|\ul\|_{L^\infty_tH^s}+\|f_\lambda\|_{\yl[J]}, \hspace{0.5in} \LL
\ul =f_\lambda.\eeq We now split $f_\lambda$ into two components
\[f_\lambda=f_{1,\lambda}+f_{2,\lambda}, \hspace{0.2in}f_{1,\lambda}\in L^2, \hspace{0.2in}f_{2,\lambda}\in DS.\] Pick $\vl$ such that $\LL \vl=f_{2,\lambda},
\|f_{2,\lambda}\|_{DS}=\|\vl\|_S$. (or
$({v^i_{\lambda}})_1^{\infty}$ with
$\|v^i_{\lambda}\|_S\rightarrow \|f_2\|_{DS}$.)

Then we have $\LL(\ul-\vl)=f_{1,\lambda}$.

Notice the fact that, for any function $\phi$   and time interval
$I=[t_0,t_1]$
\[\|\phi_\lambda\|_{X^1[I]}\approx \lambda^s|I|^{-\frac{1}{2}}\|\phi_\lambda\|_{\txt[I]}+\lambda^s|I|^{\frac{1}{2}}\|\LL \phi_\lambda\|_{\txt[I]}.\]

So we get
\ben\|\ul\|_{\xl[J]}&\lesssim& \|\ul-\vl\|_{X^1[J]}+\|\vl\|_{S[J]}\\
&\lesssim& \lambda^{s}|J|^{-\frac{1}{2}}\|\ul-\vl\|_{\txt[J]}+\|f_{1,\lambda}\|_{\DI[J]}+\|f_{2,\lambda}\|_{DS[J]}\\
&\lesssim& \|\ul\|_{L^\infty_tH^s}
+\|f_{1,\lambda}\|_{\DI[J]}+\|f_{2,\lambda}\|_{DS[J]}. \een

Here we used the fact
\[\lambda^{s}|J|^{-\frac{1}{2}}\|\vl\|_{\txt[J]}\lesssim\|\vl\|_{S[J]},\]
which can be checked easily.

For the second estimate about local energy space, we can still
localize to fixed frequency, and need to show that
\beq\label{expressionforlocalenergy}\sup_j\sum^{|J|=\lambda^{4s+3}}_{
J\subset I}\|\chi^{\lambda}_j u_\lambda\|_{\xl[J]}^2\lesssim
\sup_j\sum^{|J|=\lambda^{4s+3}}_{ J\subset
I}(\lambda^{-2s-5}\|\chi_j^{\lambda}\partial_x
u_{\lambda}\|^2_{\txt[J]}+\|\chi^{\lambda}_j
f_\lambda\|_{\yl[J]}^2)\eeq To prove the estimate, let us consider
the inhomogeneous problem on interval $J=[a,b]$ of size
$|J|=\lambda^{4s+3}$,
\[ \LL \ul^k=P_\lambda \chi_k^\lambda f_\lambda, \hspace{0.2in} \ul^k(a)=\chi_k^\lambda u_{0,\lambda}\]
and prove that
\beq\label{provelocalenergyestimate}\|\chi^{\lambda}_j
\ul^k\|_{\xl[J]}\lesssim \left<j-k\right>^{-N} (
\lambda^s|J|^{-\frac{1}{2}}\|\chi_k^{\lambda} \ul^k\|_{L^2_{t,x}}
+\|\chi^{\lambda}_k f_\lambda\|_{\yl[J]}).\eeq When $j\approx k$,
it is essentially the same as (\ref{proveenergyestimate}). Notice
in the process of proving  (\ref{proveenergyestimate}), we get
\ben \|\ul\|_{\xl[J]}\lesssim
\lambda^s|J|^{-\frac{1}{2}}\|\ul\|_{L^2_{t,x}[J]}+\|f_\lambda\|_{\yl[J]}.\een
When $|j-k|\gg 1$, it follows from the rapid decay estimate on the
kernel $K_{jk} $ of $\chi_j^\lambda
e^{t\partial_x^3}P_\lambda\chi_k^\lambda$:
\[|K_{jk}(t,x,y)|\lesssim \lambda^{-N}\left<j-k\right>^{-N}, \hspace{0.2in }|t|\leq \lambda^{4s+3}.\]
Since $\ul=\sum_k \ul^k$, so we sum up $k$ in
(\ref{provelocalenergyestimate}), and get
\[ \|\chi^{\lambda}_j(x)\ul\|_{\xl[J]}\lesssim \sum_k\left<j-k\right>^{-N} ( \lambda^s|J|^{-\frac{1}{2}}\|\chi_k^{\lambda} \ul\|_{L^2_{t,x}[J]}
+\|\chi^{\lambda}_k(x)f_\lambda\|_{\yl[J]}),\] which is equivalent
to (\ref{expressionforlocalenergy}).
\end{proof}

\section{Linear and  bilinear estimate}
\label{sec3}

In this section, we look at solutions to the Airy equation,
\begin{equation}
\partial_t u+ \partial^3_xu=0, \hspace{3mm}  u(0,x)=u_0(x). \label{Airy}
\end{equation}
Solutions satisfy the following Strichartz and local smoothing
estimate \cite{KPV3}\cite{Tao}.

\begin{prop}Let $(q, r)$ be Strichartz pair
\begin{equation}\frac{2}{q}+\frac{1}{r}=\frac{1}{2}, \hspace{3mm} 4\leq q\leq\infty. \label{StrichartzPair}\end{equation}

 Then the solution of the Airy equation
satisfies
\[\|u\|_{L^q_tL^r_x}\lesssim \||D|^{-\frac{1}{q}}u_0\|_{L^2}.\]
\end{prop}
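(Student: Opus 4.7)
The plan is a frequency-localized $TT^*$ argument at the two endpoints of the scaling line $\frac{2}{q}+\frac{1}{r}=\frac{1}{2}$, followed by Riesz-Thorin interpolation. The first endpoint, $(q,r)=(\infty,2)$, carries no derivative loss and reduces immediately to the conservation law $\|e^{-t\partial_x^3}u_0\|_{L^2_x}=\|u_0\|_{L^2_x}$ via Plancherel. The content of the estimate is concentrated at the other endpoint $(q,r)=(4,\infty)$, where the derivative weight $|D|^{-1/4}$ appears.

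To handle the $(4,\infty)$ endpoint, I would work dyadically in frequency and prove, for each $\lambda\geq 1$, the bound $\|P_\lambda e^{-t\partial_x^3}u_0\|_{L^4_tL^\infty_x}\lesssim \lambda^{-1/4}\|P_\lambda u_0\|_{L^2}$. The key analytic input is the frequency-refined dispersive bound
\[
\|P_\lambda e^{-\tau\partial_x^3}\|_{L^1_x\to L^\infty_x}\lesssim \min\bigl(\lambda,\;(\lambda|\tau|)^{-1/2}\bigr),
\]
which follows from stationary phase applied to $\int e^{i(x\xi+\tau\xi^3)}\chi(\xi/\lambda)\,d\xi$: on the support of $\chi(\cdot/\lambda)$ the second derivative of the phase in $\xi$ equals $6\tau\xi$ and has size $|\tau|\lambda$, producing the factor $(\lambda|\tau|)^{-1/2}$ once $|\tau|\gtrsim \lambda^{-3}$, while the trivial Bernstein bound $\lambda$ controls shorter times. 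By the $TT^*$ principle, it suffices to show that $f\mapsto \int P_\lambda e^{-(t-s)\partial_x^3}f(s,\cdot)\,ds$ maps $L^{4/3}_tL^1_x$ to $L^4_tL^\infty_x$ with operator norm $\lesssim \lambda^{-1/2}$. This splits into a short-time piece $|t-s|\leq \lambda^{-3}$, handled by Young's inequality against the convolution kernel $\lambda\mathbf{1}_{\{|\cdot|\leq\lambda^{-3}\}}\in L^2_t$ of norm $\sim\lambda^{-1/2}$, and a long-time piece, handled by Hardy-Littlewood-Sobolev in time at the matching exponent $\gamma=1/2=2/q$, which again produces a factor $\lambda^{-1/2}$. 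Taking square roots yields the claim.

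The main technical obstacle is passing from the dyadic bound to the global estimate at the endpoint $r=\infty$, since the Littlewood-Paley square-function characterization fails there and a naive $\ell^2$ Cauchy-Schwarz summation does not close cleanly against $\||D|^{-1/4}u_0\|_{L^2}$. This is standardly circumvented either by running $TT^*$ directly on $u_0\mapsto e^{-t\partial_x^3}u_0$ at the level of the full kernel, combined with an atomic-type decomposition that respects the $L^\infty_x$ endpoint, or by the argument in the cited references \cite{KPV3,Tao}. Once the two endpoints are in hand, the intermediate pairs follow by Riesz-Thorin (Stein complex interpolation) applied to the analytic family $u_0\mapsto |D|^{1/q}e^{-t\partial_x^3}u_0$ viewed as a map from $L^2_x$ into $L^q_tL^r_x$, which covers exactly the range $4\leq q\leq \infty$ with $\frac{2}{q}+\frac{1}{r}=\frac{1}{2}$ and produces the derivative weight $|D|^{-1/q}$ on the initial data as the interpolated loss.
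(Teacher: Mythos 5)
The paper offers no proof of this proposition at all: it is stated as a known result and attributed to \cite{KPV3} and \cite{Tao}, so there is nothing in the source to compare your argument against line by line. Your proof is the standard one and is essentially correct. The frequency-localized dispersive bound $\|P_\lambda e^{-\tau\partial_x^3}\|_{L^1_x\to L^\infty_x}\lesssim\min(\lambda,(\lambda|\tau|)^{-1/2})$ is right (the phase has second derivative $6\tau\xi\sim\tau\lambda$, and the two regimes match at $|\tau|\sim\lambda^{-3}$), the $TT^*$ bookkeeping at $(4,\infty)$ is right (Young against an $L^2_t$ kernel of norm $\lambda^{-1/2}$ for short times, Hardy--Littlewood--Sobolev with exponent $1/2$ for long times, square root at the end), and you correctly identify the genuine weak point: the dyadic pieces do not resum against $\||D|^{-1/4}u_0\|_{L^2}$ at $r=\infty$, since Cauchy--Schwarz in $\lambda$ loses a logarithm. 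The cleanest repair, which is exactly the Kenig--Ponce--Vega proof, is to skip the Littlewood--Paley decomposition entirely and run $TT^*$ on the global operator using the derivative-weighted dispersive estimate $\||D|^{1/2}e^{-t\partial_x^3}\|_{L^1_x\to L^\infty_x}\lesssim|t|^{-1/2}$, which follows from van der Corput applied to $\int e^{i(x\xi+t\xi^3)}|\xi|^{1/2}\,d\xi$ uniformly in $x$; then HLS with $\gamma=1/2<1$ closes the $(4,\infty)$ endpoint with no summation issue. I would also note that for $4<q<\infty$ you do not need the Stein interpolation of the analytic family $|D|^{z}e^{-t\partial_x^3}$: each pair on the line follows directly from $TT^*$ with the interpolated dispersive bound $\||D|^{2/q}e^{-t\partial_x^3}\|_{L^{r'}_x\to L^r_x}\lesssim|t|^{-2/q}$ and HLS at exponent $2/q<1$; interpolation is only a convenience, not a necessity. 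So your plan is sound, but the dyadic detour is redundant given that the global argument is what actually proves the one case ($q=4$) where care is needed.
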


\begin{prop}
The solution of the Airy equation satisfies the local smoothing
estimate
\[\|\partial_x u\|_{L^{\infty}_xL^2_t}\lesssim \|u_0\|_{L^2}.\]
\end{prop}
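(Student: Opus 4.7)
The plan is to establish this classical Kato-type local smoothing estimate pointwise in $x$ via Plancherel in $t$, exploiting the change of variables $\tau = \xi^3$ to convert the time Fourier side into the spatial frequency side, so that the derivative $\partial_x$ supplies exactly the factor of $|\xi|$ needed to absorb the singularity of the Jacobian at $\xi = 0$.

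First, I would fix $x \in \mathbb{R}$ and write the solution of the Airy equation by Fourier inversion as
\[\partial_x u(t,x) = \int_{\mathbb{R}} i\xi\, e^{ix\xi}\, e^{it\xi^3}\, \hat{u}_0(\xi)\, d\xi.\]
Since $\xi \mapsto \tau := \xi^3$ is a smooth bijection of $\mathbb{R}$ with Jacobian $d\tau = 3\xi^2\, d\xi$, I would substitute to rewrite the above as a one-dimensional Fourier integral in the $\tau$ variable,
\[\partial_x u(t,x) = \int_{\mathbb{R}} \frac{i\, \xi(\tau)}{3\, \xi(\tau)^2}\, e^{ix\xi(\tau)}\, \hat{u}_0(\xi(\tau))\, e^{it\tau}\, d\tau,\]
where $\xi(\tau)$ denotes the real cube root of $\tau$.

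Next I would apply Plancherel in $t$ at fixed $x$ and change variables back to $\xi$. The factor $|\xi|^{-2}$ produced by squaring the amplitude is exactly balanced by the $\xi^2$ from the inverse Jacobian, giving
\[\|\partial_x u(\cdot, x)\|_{L^2_t}^2 \;\lesssim\; \int_{\mathbb{R}} \frac{|\hat{u}_0(\xi)|^2}{\xi^2}\cdot \xi^2\, d\xi \;=\; c\, \|u_0\|_{L^2}^2.\]
Since the right-hand side is independent of $x$, taking the supremum in $x$ yields the desired estimate.

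There is no serious obstacle here; the only point requiring attention is the behavior of the substitution near $\xi = 0$, where $d\tau/d\xi = 3\xi^2$ degenerates. It is precisely the $|\xi|$ gained from $\partial_x$ that makes the cancellation work, which also explains why the same argument cannot give a pointwise-in-$x$ estimate on $u$ itself at the $L^2$ level of regularity.
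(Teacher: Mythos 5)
Your proof is correct: the paper does not prove this proposition but simply cites it to Kenig--Ponce--Vega and Tao, and your Plancherel-in-$t$ argument via the substitution $\tau=\xi^3$ is precisely the classical proof given in those references (indeed it yields the estimate as an identity, uniformly in $x$). The only cosmetic caveat is that one should first take $\hat{u}_0$ supported away from $\xi=0$ (or split $\xi>0$ and $\xi<0$) so the change of variables is unambiguous, and then pass to the limit; as you note, the factor $|\xi|$ from $\partial_x$ makes the resulting amplitude square-integrable against the Jacobian.
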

\begin{prop}
The solution of the Airy equation satisfies maximal function
estimate
\[\|\partial^{-\frac{1}{4}}_x u\|_{L^{4}_xL^{\infty}_t}\lesssim \|u_0\|_{L^2}.\]
\end{prop}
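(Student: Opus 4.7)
The statement is the classical Kenig-Ponce-Vega maximal function estimate for the Airy group, and my plan is a Littlewood-Paley reduction followed by a $TT^*$ argument driven by stationary phase. First, by a Littlewood-Paley square function estimate in $L^4_x$, it suffices to prove the single-frequency version
\[ \|e^{t\partial_x^3} P_\lambda u_0\|_{L^4_x L^\infty_t} \lesssim \lambda^{1/4} \|P_\lambda u_0\|_{L^2} \]
for each dyadic $\lambda \geq 1$. The scaling $u_0(x) \mapsto \lambda^{1/2} u_0(\lambda x)$, under which the Airy flow is invariant and the $L^4_x L^\infty_t$ norm scales by $\lambda^{1/4}$ while $L^2$ is preserved, reduces this in turn to the single frequency-unit case: for $\tilde{\psi}$ a bump on $|\xi| \sim 1$ and $\tilde{P}_1$ the corresponding Fourier multiplier,
\[ \|e^{t\partial_x^3} \tilde{P}_1 f\|_{L^4_x L^\infty_t} \lesssim \|f\|_{L^2}. \]

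For this localized estimate, I would let $S f = e^{t\partial_x^3} \tilde{P}_1 f$ and apply $TT^*$: the sought bound is equivalent to $SS^* : L^{4/3}_x L^1_t \to L^4_x L^\infty_t$, and $SS^*$ is space-time convolution with the kernel
\[ K(t,x) = \int e^{i(x\xi + t\xi^3)} \tilde{\psi}(\xi)^2\, d\xi. \]
The heart of the matter is then the kernel estimate. On the support $|\xi|\sim 1$ the phase $\phi(\xi) = x\xi + t\xi^3$ satisfies $\phi''(\xi) = 6t\xi \sim |t|$, so standard stationary phase gives $|K(t,x)| \lesssim \langle t \rangle^{-1/2}$ uniformly in $x$, while non-stationary phase integration by parts provides additional rapid decay in $x$ off the Airy cone $x \sim -3t\xi_0^2$. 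The desired mapping property of $SS^*$ then follows from Minkowski in the $x$ variable together with a one-dimensional endpoint Hardy-Littlewood-Sobolev fractional integration in $t$ governed by the $|t|^{-1/2}$ decay of the kernel.

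The main obstacle is the interplay between the spatial and temporal variables: the pointwise bound $|K(t,x)| \lesssim \langle t\rangle^{-1/2}$ alone is too crude to close the $L^4_x$ integration directly, so I would split $K$ into a piece concentrated on a neighborhood of the Airy cone of width $\sim |t|^{1/3}$ in $x$, where a sharper pointwise bound and a clean $L^4_x$ structure are available, and an off-cone rapidly decaying piece which is essentially trivial. Careful bookkeeping of these two contributions delivers the required $L^4_x L^\infty_t$ maximal estimate, after which the square-summation in $\lambda$ closes the proof.
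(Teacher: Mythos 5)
The paper itself gives no proof of this proposition --- it is quoted from Kenig--Ponce--Vega \cite{KPV3} (see also \cite{Tao}) --- so the only question is whether your argument stands on its own. Your skeleton ($TT^*$ plus stationary phase on the kernel $K(t,x)=\int e^{i(x\xi+t\xi^3)}\tilde{\psi}(\xi)^2\,d\xi$) is the right one, but the closing step is aimed at the wrong variable. For the mapping $SS^*:L^{4/3}_xL^1_t\to L^4_xL^\infty_t$ the time exponents $L^1_t\to L^\infty_t$ cost nothing --- one just takes absolute values and integrates out $s$ --- and there is no ``endpoint Hardy--Littlewood--Sobolev fractional integration in $t$'' from $L^1_t$ to $L^\infty_t$ (HLS fails at those endpoints; convolution against $|t|^{-1/2}$ is the $L^{4/3}_t\to L^4_t$ Strichartz mechanism, not the maximal one). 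The entire burden is the gain $L^{4/3}_x\to L^4_x$, and for that you need the kernel bound $|K(t,x)|\lesssim\langle x\rangle^{-1/2}$ \emph{uniformly in $t$}; then $\sup_t|SS^*g|\le \langle\cdot\rangle^{-1/2}\ast\|g\|_{L^1_t}$ pointwise in $x$ and HLS in the $x$ variable finishes. You in fact have all the ingredients for this bound: when $|x|\not\sim|t|$ your non-stationary-phase estimate gives decay in $\max(|x|,|t|)$, and when $|x|\sim|t|$ your van der Corput bound $|t|^{-1/2}$ is exactly $|x|^{-1/2}$. But as written, ``Minkowski in $x$ plus fractional integration in $t$'' does not produce the $L^{4/3}_x\to L^4_x$ improvement, so the proof does not close.

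A second, smaller gap is the opening reduction. Summing the dyadic pieces by the triangle inequality gives $\|\partial_x^{-1/4}u\|_{L^4_xL^\infty_t}\le\sum_\lambda\|\partial_x^{-1/4}P_\lambda u\|_{L^4_xL^\infty_t}\lesssim\sum_\lambda\|P_\lambda u_0\|_{L^2}$, which is a Besov $B^{0}_{2,1}$ norm of the data rather than $\|u_0\|_{L^2}$, and there is no room to spare since the exponent $-1/4$ is sharp. The Littlewood--Paley square function theorem does not simply pass through the inner $\sup_t$, because the functions $\sup_t|P_\lambda u(\cdot,t)|$ are no longer frequency localized. The cleanest fix --- and the route taken in \cite{KPV3} --- is to dispense with the decomposition entirely and run $TT^*$ on $D_x^{-1/4}e^{t\partial_x^3}$ directly, proving $\bigl|\int|\xi|^{-1/2}e^{i(x\xi+t\xi^3)}\,d\xi\bigr|\lesssim|x|^{-1/2}$ uniformly in $t$ and then applying HLS in $x$.
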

Once we have estimates for linear equation, we can extend it
to $X^1$.
\begin{cor} Let $(q,r) $ be a
Strichartz pair as   in relation
\textnormal{(\ref{StrichartzPair})}.  Then we have \beq\|\eit
\ul\|_{L^q_tL^r_x}\lesssim
\lambda^{-\frac{1}{q}-s}\|\ul\|_{X^1[I]},\label{StrichartzX}\eeq
Also, the following smoothing estimate and maximal function
estimate hold \beq\|  \eit  \ul\|_{L^{\infty}_xL^2_t}\lesssim
\lambda^{-1-s}\|\ul\|_{X^1[I]}\label{localsmoothing},\eeq
\beq \|\eit
\ul\|_{L^{4}_xL^{\infty}_t}\lesssim\lambda^{\frac{1}{4}-s}\|\ul\|_{X^1[I]},\eeq
\end{cor}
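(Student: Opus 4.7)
The plan is a standard Duhamel / transference argument. Fix a dyadic frequency $\lambda$ and an interval $I=[t_0,t_1]$ with $|I|\le 1$. By Duhamel's formula applied to the (extended) function $u_\lambda$,
\[u_\lambda(t) = e^{-(t-t_0)\partial_x^3}u_\lambda(t_0) + \int_{t_0}^t e^{-(t-s)\partial_x^3}\bigl[\LL u_\lambda\bigr](s)\,ds,\qquad t\in I,\]
so it suffices to bound the two pieces separately in each of the three target norms. Throughout, the sharp cutoff $\eit$ only restricts to $I$ and does not enlarge the right-hand side norms (the $L^q_t$, $L^2_t$, $L^\infty_t$ ingredients are all monotone under truncation), so I can and will ignore it.

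For the homogeneous piece, I apply the three free-equation estimates just established directly to $e^{-(t-t_0)\partial_x^3}u_\lambda(t_0)$. This gives bounds in terms of $\||D|^{-1/q}u_\lambda(t_0)\|_{L^2}$, $\||D|^{-1}u_\lambda(t_0)\|_{L^2}$, and $\||D|^{-1/4}u_\lambda(t_0)\|_{L^2}$; since $u_\lambda$ is frequency-localized at scale $\lambda$, Bernstein reduces these to $\lambda^{-1/q}$, $\lambda^{-1}$, $\lambda^{-1/4}$ times $\|u_\lambda(t_0)\|_{L^2}$, respectively. The definition of $X^1[I]$ gives $\|u_\lambda(t_0)\|_{L^2}\le \lambda^{-s}\|u_\lambda\|_{X^1[I]}$, which yields exactly the powers $\lambda^{-1/q-s}$, $\lambda^{-1-s}$, $\lambda^{1/4-s}$ claimed in the corollary.

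For the Duhamel piece I first apply Minkowski's integral inequality to pull the (possibly mixed) spatial-temporal norm inside the $s$-integral, which is legitimate for each of the three target norms since they are all of the form $\|\cdot\|_{L^p_A L^q_B}$ with $p,q\ge 1$. I then apply the same free estimate to each $e^{-(t-s)\partial_x^3}[\LL u_\lambda](s)$, and collapse the resulting integral $\int_I \|[\LL u_\lambda](s)\|_{L^2_x}\,ds$ via Cauchy--Schwarz in $s$ to $|I|^{1/2}\|\LL u_\lambda\|_{\txt(I)}$. Combining with the Bernstein factors coming from the free estimate, the Strichartz bound becomes $\lambda^{-1/q}|I|^{1/2}\|\LL u_\lambda\|_{\txt}$, which by the second half of the $X^{0,1}[I]$ definition is $\lesssim \lambda^{-1/q-s}\|u_\lambda\|_{X^1[I]}$; the local smoothing and maximal-function cases are identical with $\lambda^{-1/q}$ replaced by $\lambda^{-1}$ and $\lambda^{-1/4}$ respectively.

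No real obstacle is expected: the argument is a faithful adaptation of the classical $X^{s,b}$ transference principle, with the high-modulation $L^2$ representation of $X^{s,b}$ replaced by the explicit $\LL$-based second term in $\|\cdot\|_{X^{0,1}[I]}$. The one detail worth double-checking is the validity of Minkowski in the mixed norm $\fxtt$ arising in the smoothing case, i.e.
\[\Bigl\|\int_I \Phi(s,\cdot,\cdot)\,ds\Bigr\|_{\fxtt}\le \int_I \|\Phi(s,\cdot,\cdot)\|_{\fxtt}\,ds,\]
which follows from Minkowski's inequality for $L^p$ of $L^q$ with $p=\infty$ in $x$ and $p=2$ in $t$.
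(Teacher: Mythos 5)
Your argument is correct and is essentially the paper's proof: both expand $u_\lambda$ by Duhamel, treat the homogeneous part with the free estimates plus the bound $\|u_\lambda(t_0)\|_{L^2}\le\lambda^{-s}\|u_\lambda\|_{X^1[I]}$, and treat the inhomogeneous part by reducing to $|I|^{1/2}\|(\partial_t+\partial_x^3)u_\lambda\|_{L^2_{t,x}}$ via Cauchy--Schwarz (the paper cites the $L^1_tL^2_x$ inhomogeneous Strichartz estimate where you derive it by Minkowski, which is the same thing since truncating to $\{s\le t\}$ only decreases these solid mixed norms). One harmless slip: in the maximal-function case the free estimate controls $\||D|^{-1/4}u\|_{L^4_xL^\infty_t}$, so Bernstein produces a factor $\lambda^{+1/4}$ (not $\lambda^{-1/4}$), consistent with the exponent $\lambda^{1/4-s}$ you correctly state in the conclusion.
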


\begin{proof}
The results follow by expanding
 $u_\lambda$ via Duhamel's formula.

If $\LL \ul =f$, then
\[\ul=e^{-t\partial^3_x}\ul(t_0)+\int_{t_0}^te^{-(t-s)\partial^3_x}f(s)ds.\]
From Strichartz estimate, and its dual form - the inhomogeneous
Strichartz estimate, see Theorem 2.3 in Tao~\cite{Tao} section
2.3,  and we get \ben
\|\eit \ul\|_{L^q_tL^r_x}&\lesssim& \|\eit e^{-t\partial^3_x}\ul(t_0) \|_{L^q_tL^r_x}+\lambda^{-\frac{1}{q}}\|\eit f\|_{L^1_tL^2_x}\\
&\lesssim&\lambda^{-\frac{1}{q}-s}\|\ul\|_{X^1[I]}. \een
We can prove the local smoothing and maximal estimate in the same
way. 
\end{proof}

We will also need the bilinear estimate as in  \cite{Axel}.
\begin{prop}\label{freebil}
Let $I^s_{\pm}$ be defined by its Fourier transform in the space
variable:
\[\mathcal{F}_xI^s_{\pm}(f,g)(\xi):=\int_{\xi_1+\xi_2=\xi}|\xi_1\pm\xi_2|^s \hat{f}(\xi_1)\hat{g}(\xi_2)d\xi_1.\]
Assume $u, v$ be two solutions to the Airy equation with initial
data $u_0,v_0$.  Then we have the bilinear estimate
\beq\|I^{\frac{1}{2}}_{+}I^{\frac{1}{2}}_{-}(u,v)\|_{L^2_{x,t}}\lesssim
\|u_0\|_{L^2_x}\|v_0\|_{L^2_x}.~\label{freebilinear}\eeq

\end{prop}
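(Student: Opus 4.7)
The plan is to reduce everything to a phase-space computation via Plancherel. Writing $\hat{u}(t,\xi_1)=e^{it\xi_1^3}\widehat{u_0}(\xi_1)$ and similarly for $v$, a direct computation gives
\[\mathcal{F}_{t,x}\bigl[I_+^{1/2}I_-^{1/2}(u,v)\bigr](\tau,\xi)=\int_{\xi_1+\xi_2=\xi}|\xi|^{1/2}|\xi_1-\xi_2|^{1/2}\,\delta(\tau-\xi_1^3-\xi_2^3)\,\widehat{u_0}(\xi_1)\widehat{v_0}(\xi_2)\,d\xi_1,\]
so by Plancherel it suffices to show that the $L^2_{\tau,\xi}$ norm of the right-hand side is controlled by $\|u_0\|_{L^2}\|v_0\|_{L^2}$.

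Next I would perform the change of variables $(\xi_1,\xi_2)\mapsto(\xi,\tau):=(\xi_1+\xi_2,\xi_1^3+\xi_2^3)$. The Jacobian is
\[\left|\det\begin{pmatrix}1 & 1\\ 3\xi_1^2 & 3\xi_2^2\end{pmatrix}\right|=3|\xi_1+\xi_2||\xi_1-\xi_2|=3|\xi||\xi_1-\xi_2|,\]
which resolves the delta function: on the constraint set the identity $(\xi_1-\xi_2)^2=(4\tau-\xi^3)/(3\xi)$ shows the map is (generically) two-to-one, and evaluating $\delta(\tau-\xi_1^3-\xi_2^3)$ on each branch contributes a factor $1/(3|\xi||\xi_1-\xi_2|)$. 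Squaring the resulting sum-of-two-branches expression and changing back from $(\tau,\xi)$ to $(\xi_1,\xi_2)$ eats exactly one Jacobian factor $3|\xi||\xi_1-\xi_2|$.

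The key algebraic point is that the weight $|\xi|^{1/2}|\xi_1-\xi_2|^{1/2}$ built into $I_+^{1/2}I_-^{1/2}$ is precisely the square root of the Jacobian. After the change of variables one is left with
\[\|I_+^{1/2}I_-^{1/2}(u,v)\|_{L^2_{t,x}}^2\lesssim \int |\widehat{u_0}(\xi_1)|^2|\widehat{v_0}(\xi_2)|^2\,d\xi_1\,d\xi_2=\|u_0\|_{L^2}^2\|v_0\|_{L^2}^2,\]
after one application of Cauchy--Schwarz to pass the sum over the two branches outside the square.

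The only subtlety I expect is handling the degeneracies where $\xi=0$ or $\xi_1=\xi_2$: at those points the Jacobian vanishes, but so does the weight $|\xi|^{1/2}|\xi_1-\xi_2|^{1/2}$, and the ratio stays bounded, so the degeneracy is purely notational and can be dealt with by a standard limiting/absolute-continuity argument. This will be the main (mild) obstacle; everything else is the classical Fefferman--Stein type change-of-variables trick adapted to the Airy curve.
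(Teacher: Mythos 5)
Your proposal is correct and follows essentially the same route as the paper: reduce to the spacetime Fourier side via Plancherel, resolve the delta function on the Airy surface by the change of variables $(\xi_1,\xi_2)\mapsto(\xi,\tau)=(\xi_1+\xi_2,\xi_1^3+\xi_2^3)$, and observe that the weight $|\xi_1+\xi_2|^{1/2}|\xi_1-\xi_2|^{1/2}$ is exactly the square root of the Jacobian $3|\xi_1+\xi_2||\xi_1-\xi_2|$. Your explicit treatment of the two-to-one branching and of the degeneracies at $\xi=0$ and $\xi_1=\xi_2$ is a mild refinement of details the paper leaves implicit, but the argument is the same.
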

\begin{proof} For a solution to the Airy equation, we can write
down its Fourier transform,
\[
\widetilde{u}=\delta(\tau-\xi^3)\hat{u}_0,~~~~ \mbox{~~}
\widetilde{v}=\delta(\tau-\xi^3)\hat{v}_0.\]Then
\[\widetilde{I^{\frac{1}{2}}_{+}I^{\frac{1}{2}}_{-}(u,v)}(\tau,\xi)
=\int_{\xi_1+\xi_2=\xi}|\xi_1+\xi_2|^{\frac{1}{2}}|\xi_1-\xi_2|^{\frac{1}{2}}\hat{u}_0(\xi_1)\hat{v}_0(\xi_2)\delta(\tau-\xi_1^3-\xi_2^3)\,d\xi_1.\]
 Let us make change of variable
$\xi_1+\xi_2=\xi, \tau-\xi_1^3-\xi_2^3=\eta$.\\ With $\tau$, $\xi$
fixed, we have
\[d\xi_1=\frac{1}{3|\xi_1+\xi_2||\xi_1-\xi_2|}d\eta,\] hence we get
\[\widetilde{I^{\frac{1}{2}}_{+}I^{\frac{1}{2}}_{-}(u,v)}(\tau,\xi)=
\frac{1}{3|\xi_1+\xi_2|^{\frac{1}{2}}|\xi_1-\xi_2|^{\frac{1}{2}}}\hat{u}_0(\xi_1)\hat{v}_0(\xi_2).\]
Now $\xi_1$, $\xi_2$ are solutions to
\[ \xi_1+\xi_2=\xi, \hspace{0.2in}
\xi_1^3+\xi_2^3=\tau,\] So we have
\[d\tau d\xi=3|\xi_1^2-\xi_2^2|d\xi_1d\xi_2,\]
and it follows
\[\|I^{\frac{1}{2}}_{+}I^{\frac{1}{2}}_{-}(u,v)\|_{L^2_{x,t}}\lesssim \|u_0\|_{L^2_x}\|v_0\|_{L^2_x}.\]
\end{proof}
\begin{remark} Propostion~\ref{freebil}  gives us the usual $L^2$ estimate on product of two free solutions
 whenever they have frequency separation,
i.e. $|\xi_1\pm\xi_2|\neq 0$, $\xi_1\in supp{
\hspace{0.03in}\hat{u}}$, $\xi_2\in supp{
\hspace{0.03in}\hat{v}}$.

It is very useful especially when we localize the solutions into
dyadic frequency pieces, then the operators $I^{s}_{\pm}$ can be
simply replaced by scaler multiplication. We have the following
cases:

When $|\xi_1|\approx \mu,  |\xi_2|\approx \lambda, \mu\gg\lambda$,
then we get
\beq\|uv\|_{\txt}\lesssim\mu^{-1}\|u_0\|_{L^2}\|v_0\|_{L^2}.\label{bilinearhighlow}\eeq

When $|\xi_1|\approx |\xi_2|\approx \lambda$, and $\xi_1,\xi_2$
have opposite sign, so the output has frequency
$|\xi_1+\xi_2|\approx \alpha\lesssim \lambda$, then we get
\beq\|uv\|_{\txt}\lesssim\lambda^{-\frac{1}{2}}\alpha^{-\frac{1}{2}}\|u_0\|_{L^2}\|v_0\|_{L^2}.\label{bilinearhh}\eeq

\end{remark}

In case $|\xi_1|\approx |\xi_2|\approx \lambda$, but $\xi_1,\xi_2$
have same sign, the output lies close to a new curve
$\tau=\frac{1}{4}\xi^3$. Following the idea in~\cite{KPV1}, we
have the following Proposition.

\begin{prop} \label{propNC}Assume u,v are two smooth solutions to the Airy equation with initial data $u_0$, $v_0$,   localized at frequencies about
 the comparable size and also the same  sign,
and $I$ be an interval of size less than 1, then we have the
following estimate \beq \|\eit
uv\|_{X^{\frac{1}{4},\frac{1}{4}}_{\tau=\frac{1}{4}\xi^3}}
\lesssim \|u_0\|_{L^2_x}\|v_0\|_{L^2_x}.\label{bilinearNC}\eeq
\end{prop}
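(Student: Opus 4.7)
The strategy is to identify the weight of the $X^{1/4,1/4}_{\tau=\xi^3/4}$ norm with the symbol of the operator $I^{1/2}_+ I^{1/2}_-$ from Proposition~\ref{freebil}, thereby reducing the estimate to the free bilinear inequality (\ref{freebilinear}). At any point $(\tau,\xi)$ in the Fourier support of $uv$, parametrize $\xi_1 = \xi/2 + h$ and $\xi_2 = \xi/2 - h$, so that $h = (\xi_1-\xi_2)/2$. The elementary identity $\xi_1^3+\xi_2^3 = \xi^3/4 + 3\xi h^2$ yields
\[
\tau - \xi^3/4 = 3\xi h^2, \qquad |\xi|^{1/4}|\tau-\xi^3/4|^{1/4} \simeq |\xi_1+\xi_2|^{1/2}|\xi_1-\xi_2|^{1/2},
\]
which is precisely the symbol of $I^{1/2}_+ I^{1/2}_-$. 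Geometrically, when two high-frequency waves of the same sign interact, the product concentrates along the new characteristic curve $\tau = \xi^3/4$, and its distance from that curve matches the bilinear weight exactly.

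Given this identification, I apply the change of variables $(\xi_1,\xi_2) \mapsto (\xi,\tau) = (\xi_1+\xi_2,\ \xi_1^3+\xi_2^3)$, a diffeomorphism on each quadrant where $\xi_1,\xi_2$ keep their signs and are unequal, with Jacobian $3|\xi||\xi_1-\xi_2|$. The resulting formula $\widetilde{uv}(\tau,\xi) = \hat{u}_0(\xi_1)\hat{v}_0(\xi_2)/(3|\xi||\xi_1-\xi_2|)$ on the image, substituted into the definition of the $X^{1/4,1/4}_{\tau=\xi^3/4}$ norm and combined with the weight identification above, causes the singular Jacobian factors to cancel, leaving a constant multiple of $\|I^{1/2}_+ I^{1/2}_-(u,v)\|_{L^2_{x,t}}^2$. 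Proposition~\ref{freebil} then furnishes
\[
\|uv\|_{X^{1/4,1/4}_{\tau=\xi^3/4}} \lesssim \|u_0\|_{L^2}\|v_0\|_{L^2}.
\]

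It remains to insert the sharp time cutoff $\eta_I$. For this I invoke the standard stability of Bourgain-type spaces under sharp time truncation: for $|b|<1/2$ and any interval $I$, multiplication by $\eta_I$ is bounded on $X^{s,b}_{\tau=\phi(\xi)}$ with norm independent of $|I|$. Since $b = 1/4 < 1/2$, this applies directly and gives the claim. The main obstacle is the weight identification of the first paragraph, which relies on the same-sign comparable-frequency hypothesis to ensure that the output concentrates near $\tau = \xi^3/4$ at distance exactly $3\xi h^2$; by contrast, the cutoff step is routine and uses only the subcritical Bourgain index $b<1/2$, which is what motivates the particular choice of the space.
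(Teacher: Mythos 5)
Your main step is the same as the paper's: the identity $\xi_1^3+\xi_2^3=\tfrac14\xi^3+3\xi h^2$ with $h=(\xi_1-\xi_2)/2$ shows that on the Fourier support of $uv$ one has $|\xi|^{1/4}|\tau-\tfrac14\xi^3|^{1/4}\simeq|\xi_1+\xi_2|^{1/2}|\xi_1-\xi_2|^{1/2}$, and the change of variables $(\xi_1,\xi_2)\mapsto(\xi_1+\xi_2,\xi_1^3+\xi_2^3)$ with Jacobian $3|\xi_1^2-\xi_2^2|$ then reduces everything to Proposition~\ref{freebil}. That part is correct. The gap is in what this computation actually delivers: the Jacobian cancels against the square of the \emph{homogeneous} weight $|\xi|^{1/4}|\tau-\tfrac14\xi^3|^{1/4}$, so you obtain a bound in $\dot X^{1/4,1/4}_{\tau=\frac14\xi^3}$ only. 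The proposition is stated in the inhomogeneous space, whose weight is $\gtrsim 1$ on the low-modulation region $|\tau-\tfrac14\xi^3|\lesssim 1$, i.e.\ $|\xi|\,|\xi_1-\xi_2|^2\lesssim 1$. There the cancellation disappears and one is left, after the change of variables, with
\[
\int\!\!\int_{|\xi_1-\xi_2|\lesssim\lambda^{-1/2}}\frac{|\hat u_0(\xi_1)|^2|\hat v_0(\xi_2)|^2}{|\xi_1-\xi_2|}\,d\xi_1\,d\xi_2,
\]
which is \emph{not} controlled by $\|u_0\|_{L^2}^2\|v_0\|_{L^2}^2$ (it diverges logarithmically for data concentrated near the diagonal). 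So the cutoff-free inhomogeneous estimate $\|uv\|_{X^{1/4,1/4}}\lesssim\|u_0\|_{L^2}\|v_0\|_{L^2}$ that your second step takes as input is false, and the plan "prove the inhomogeneous bound, then insert $\eta_I$ by the $b<1/2$ truncation lemma" cannot be executed in that order.

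The tell is that your argument never uses the hypothesis $|I|\le 1$, which is in the statement precisely to rescue the low-modulation piece. The paper's fix is to recover the inhomogeneous norm from the homogeneous one \emph{after} truncation: for the region where the modulation weight is $O(1)$ one needs only an $L^2_{t,x}$ bound on $\eta_I(t)\,uv$, and
\[
\|\eta_I f\|_{L^2_{t,x}}\lesssim |I|^{\frac14}\|f\|_{L^4_tL^2_x}\lesssim \|f\|_{\dot X^{0,\frac14}_{\tau=\frac14\xi^3}}
\]
by H\"older on the interval of length $\le 1$ followed by Sobolev embedding in $t$; the high-modulation piece is where the homogeneous and inhomogeneous weights agree and your computation applies (and is stable under the sharp cutoff, as you say). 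Your truncation step is routine only once this low-modulation repair is inserted; as written, it is the point where the proof breaks.
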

\begin{proof}
The proof is essentially the same as Proposition~\ref{freebil}.
There we first take two frequency really close, but have small
separation, i.e.  $|\xi_1-\xi_2|\geq\epsilon$, so that all the
calculation are still true, and we get the
estimate~(\ref{freebilinear}). Notice that
\[\xi_1+\xi_2=\xi, \hspace{0.2in} \xi_1^3+\xi_2^3=\tau.\]
So we solve for $\xi_1, ~~\xi_2$ and get
$|(\tau-\frac{1}{4}\xi^3)\xi|^{\frac{1}{2}}=\frac{3}{4}|(\xi_1+\xi_2)(\xi_1-\xi_2)|$,
which is exactly the multiplier we have in the space
$\dot{X}^{\frac{1}{4},\frac{1}{4}}_{\tau=\frac{1}{4}\xi^3}$. Then
we take the limit as $\epsilon\rightarrow 0$, and the norm
converges as long as we are considering smooth functions. So we
get \beq \|
uv\|_{\dot{X}^{\frac{1}{4},\frac{1}{4}}_{\tau=\frac{1}{4}\xi^3}}
\lesssim \|u_0\|_{L^2_x}\|v_0\|_{L^2_x}.\label{bilinearNC2}\eeq To
pass to
nonhomogeneous space
, notice the following estimate
\[\|\eit f\|_{\txt}\lesssim |I|^{\frac{1}{4}}\|f\|_{L^4_tL^2_x}\lesssim \|f\|_{\dot{X}^{0,\frac{1}{4}}_{\tau=\frac{1}{4}\xi^3}}.\]
The last inequality is by Sobolev embedding.
\end{proof}
In Proposition \ref{biliinearprop}, we will extend these estimates
(\ref{bilinearhighlow}) (\ref{bilinearhh}) from free solutions to
functions in $X^1$.


Now we list some $L^p$ estimates, which are mostly
straightforward.
\begin{prop}\label{easyestimate} When $-1\leq s\leq -\frac{3}{4}$, we have the following estimates.
\beq\|\eit Q_{\sigma}u_\lambda\|_{L^2_{x,t}}\lesssim
\sigma^{-1}\lambda^{-s}|I|^{-\frac{1}{2}}\|u_\lambda\|_{X^1},
\label{l2Asigma}\eeq
\begin{equation}\|Q_{\lambda^{4+\frac{3}{2s}}\lesssim \sigma\lesssim\lambda^3}u_\lambda\|_{L^2_{x,t}}\lesssim
\lambda^{-3s-\frac{3}{2s}-\frac{11}{2}}\|\ul\|_{\B} \lesssim
\lambda^{-2-s}\|u_\lambda\|_{\B},\label{L2B}\end{equation}
\begin{equation}\|Q_{\lambda^{4+\frac{3}{2s}}\lesssim \sigma\lesssim\lambda^3}u_\lambda\|_{L^\infty_{x,t}}
\lesssim
\lambda^{-2s-1}\|u_\lambda\|_{\B},\label{LinftyB}\end{equation}
\begin{equation}\|\ul\|_{L^3_{x,t}}\lesssim \lambda^{-\frac{1}{3}-2s-2}\|\ul\|_{\K},\label{L3K}\end{equation}
\beq\|Q_{\gtrsim\lambda^3}\ul\|_{L^2_{t,x}}\lesssim
\lambda^{-2s-3}\|\ul\|_{\C},\label{L2C} \eeq
\begin{equation}\|Q_{\gtrsim\lambda^3}\ul\|_{L^q_{t,x}}\lesssim \lambda^{1-\frac{4(s+2)}{q}}
\|\ul\|_{\C\cap\LP}, \mbox{~} 2\leq q\leq
p,\label{LqC}\end{equation}
\beq\|Q_{\gtrsim\lambda^3}\ul\|_{L^3_{t,x}}\lesssim
\lambda^{-\frac{4}{3}(s+1)-\frac{1}{3} }\|\ul\|_{Z},
\label{l3z}\eeq
\beq\|Q_{\gtrsim\lambda^3}\ul\|_{L^6_{t,x}}\lesssim
\lambda^{-\frac{2}{3}(s+1) +\frac{1}{3} }\|\ul\|_{Z}.
\label{l6z}\eeq

\end{prop}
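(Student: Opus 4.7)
The plan is to treat all eight estimates as Fourier-side Bernstein/embedding computations, grouped by which underlying space appears on the right. The uniform strategy is to express $\|Q_\sigma u_\lambda\|_{L^2_{t,x}}$ in terms of the weighted norm defining the relevant $X^{s,b}$-type space, apply Bernstein to upgrade to $L^\infty$ when needed, and sum the dyadic series in $\sigma$ after identifying the worst modulation.

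For (\ref{l2Asigma}), I would start from the identity $\|\LL u_\lambda\|_{L^2_{t,x}}\lesssim \lambda^{-s}|I|^{-1/2}\|u_\lambda\|_{X^1}$, which is immediate from the definition of $X^{0,1}$ and the dyadic normalization; composing with $Q_\sigma$ then inverts $\LL$ on the shell $|\tau-\xi^3|\approx\sigma$ and gains the factor $\sigma^{-1}$. For (\ref{L2B}), the identity $\|Q_\sigma u_\lambda\|_{L^2}\approx \lambda^s\sigma^{-(1+s)}\|Q_\sigma u_\lambda\|_\B$ makes the dyadic sum over $\sigma\in[\lambda^{4+3/(2s)},\lambda^3]$ peak at the low endpoint (since $1+s\ge 0$), which after arithmetic produces the exponent $-3s-3/(2s)-11/2$; the second half of (\ref{L2B}) is a purely algebraic check for $s\in[-1,-3/4]$. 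For (\ref{LinftyB}) I would compose (\ref{L2B}) with Bernstein in $x$ (cost $\lambda^{1/2}$) and in $t$ along each modulation shell (cost $\sigma^{1/2}$); this shifts the worst modulation to $\sigma\approx\lambda^3$ and collapses the power to $-2s-1$.

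For (\ref{L3K}), I would unfold $\K=|D|^{-2s-2}X^{1/4,1/4}_{\tau=\xi^3/4}$ to reduce the inequality to $\|u_\lambda\|_{L^3_{t,x}}\lesssim \lambda^{-1/3}\|u_\lambda\|_{X^{1/4,1/4}_{\tau=\xi^3/4}}$, and then invoke the bilinear picture from Proposition \ref{propNC}: an atom in this space at frequency $\lambda$ is a product $uv$ of two free Airy solutions at frequency $\approx\lambda$ of the same sign, so H\"older and the $L^6_{t,x}$ Strichartz bound $\|u\|_{L^6}\lesssim\lambda^{-1/6}\|u_0\|_{L^2}$ on each factor give $\|uv\|_{L^3}\lesssim \lambda^{-1/3}\|u_0\|_{L^2}\|v_0\|_{L^2}$. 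I expect this step to be the main obstacle, since the $b$-index $1/4$ is below the transference threshold $1/2$ and so the $L^3$ embedding cannot be read off from a standard $X^{s,b}$-Strichartz argument; the bilinear interpretation is essential.

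Finally, for (\ref{L2C})--(\ref{l6z}), the same modulation-shell expansion applied to $\C=X^{-3-4s,2s+2}$ gives (\ref{L2C}) with worst case $\sigma\approx\lambda^3$ (since $2s+2\in[0,1/2]$) and exponent $-2s-3$; (\ref{LqC}) follows by real interpolation between (\ref{L2C}) and the trivial bound $\|u_\lambda\|_{L^\infty_{x,t}}\lesssim\lambda\|u_\lambda\|_{\LP}$ at $\theta=2/q$, reproducing $1-4(s+2)/q$. Specializing to $q=3$ and $q=6$ yields (\ref{l3z}) and (\ref{l6z}); the contribution of the $\K$-component to the $Z$-norm is absorbed either by (\ref{L3K}) (which is in fact strictly stronger when $s\ge -1$) or by the equivalence $\K\approx\C$ at modulation $\approx\lambda^3$ recorded in Remark \ref{functionspaceremark}(\ref{modulationmatch}).
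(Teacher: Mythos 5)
Your treatment of (\ref{l2Asigma}), (\ref{L2B}), (\ref{LinftyB}), (\ref{L2C}), (\ref{LqC}), (\ref{l3z}) and (\ref{l6z}) is correct and is essentially the paper's proof: these are definition-chasing plus Bernstein on modulation shells and $L^2$--$L^\infty$ interpolation, and your exponent bookkeeping (identification of the worst shell, and the algebraic check that $-3s-\frac{3}{2s}-\frac{11}{2}\le -2-s$ on $[-1,-\frac34]$) is right.

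The gap is in (\ref{L3K}). You correctly reduce to $\|u_\lambda\|_{L^3_{t,x}}\lesssim\lambda^{-1/3}\|u_\lambda\|_{X^{1/4,1/4}_{\tau=\frac14\xi^3}}$, but then assert that an atom of this space at frequency $\lambda$ \emph{is} a product of two free Airy solutions of comparable frequency and the same sign. That is not true, and Proposition \ref{propNC} goes in the opposite direction: it says such products land in the space, not that they generate it. Concretely, under the change of variables $(\xi_1,\xi_2)\mapsto(\xi,\tau)=(\xi_1+\xi_2,\xi_1^3+\xi_2^3)$ the products correspond exactly to rank-one tensors $\hat u_0\otimes\hat v_0$ (times the Jacobian weight); decomposing a general element of the $L^2$-based space $X^{1/4,1/4}_{\tau=\frac14\xi^3}$ into such atoms with $\sum_j\|u_{0,j}\|_{L^2}\|v_{0,j}\|_{L^2}$ under control would require the trace-class norm of an arbitrary kernel to be comparable to its Hilbert--Schmidt norm, which fails. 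Nor can one foliate over $\tau_0=\tau-\frac14\xi^3$ and apply a Strichartz bound slice by slice, because $b=\frac14<\frac12$ leaves the $d\tau_0$ integral divergent --- as you yourself observe, transference is unavailable here. The paper instead proves (\ref{L3K}) as a genuine restriction-type estimate for the curve $\tau=\frac14\xi^3$: it interpolates the analytic family $S(\sigma)$ with multiplier $e^{\sigma^2}\Gamma(\sigma)(\tau-\frac14\xi^3\pm i0)^{-\sigma}$ between $L^2\to L^2$ at $\operatorname{Re}\sigma=0$ and $L^1\to L^\infty$ with bound $\lambda^{-1/2}$ at $\operatorname{Re}\sigma=\frac32$ (the latter from the stationary-phase decay $\|\int\theta_\lambda(\xi)e^{ix\xi+i\frac{1}{4}t\xi^3}\,d\xi\|_{L^\infty}\lesssim(t\lambda)^{-1/2}$ convolved with the $|t|^{1/2}$ bound on the fractional-integration kernel), obtaining $S(\frac12):L^{3/2}\to L^3$ with norm $\lambda^{-1/6}$, and then runs $TT^*$ with $T$ given by the multiplier $(\tau-\frac14\xi^3\pm i0)^{-1/4}$ to get $\|TP_\lambda\|_{L^2\to L^3}\lesssim\lambda^{-1/12}$, which is exactly the missing embedding. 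Some argument of this kind is needed; the bilinear picture alone does not supply it.
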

\begin{proof}
The proofs are mostly simple.
(\ref{l2Asigma}) is by definition combined with the size of the
interval.
(\ref{L2B}) (\ref{LinftyB})
(\ref{L2C}) are consequences of Bernstein inequality. (\ref{LqC})
is  by interpolating the $L^2$ estimate with $L^p$.

The only nontrivial one is  (\ref{L3K}),
Similar to~\cite{Ta1}, we look at the operator $S(\sigma)$ defined
by multiplier
$e^{\sigma^2}\Gamma(\sigma)\frac{1}{(\tau-\frac{1}{4}\xi^3\pm
i0)^{\sigma}}$, where $\Gamma(\sigma)$ is the complex valued
Gamma-function. We claim that
\[S(0+iy)P_\lambda: \txt\rightarrow L^2_{x,t},\]
\[S(\frac{3}{2}+iy)P_\lambda:L^1_{x,t}\rightarrow \fxt.\]
Let us prove the second one by computing its   Fourier inversion,
\[\mathcal{F}^{-1}\frac{\theta_{\lambda}(\xi)}{(\tau-\frac{1}{4}\xi^3\pm i0)^{\frac{3}{2}+iy}}=
\mathcal{F}^{-1}{(\tau\pm i0)}^{-\frac{3}{2}-iy} \cdot
\mathcal{F}^{-1}[\theta_\lambda(\xi)\delta_{\tau=\frac{1}{4}\xi^3}].\]
$\theta_\lambda(\xi)$ is some smooth bump function around
$\xi=\lambda$, which we used to define $P_\lambda$.

From direct computation, we have
\[\|e^{(\frac{3}{2}+iy)^2}\Gamma(\frac{3}{2}+iy)\mathcal{F}^{-1}{(\tau\pm i0)}^{-\frac{3}{2}-iy}\|_{L^\infty}\lesssim t^{\frac{1}{2}},\]
and by stationary phase we get
\[\|\mathcal{F}^{-1}[\theta_\lambda(\xi)\delta_{\tau=\frac{1}{4}\xi^3}]\|_{L^\infty}=\|\int \theta_\lambda(\xi)e^{ix\xi+i\frac{1}{4}t\xi^3}\,d\xi\|_{L^\infty}\lesssim (t\lambda)^{-\frac{1}{2}}.\]
Combining them together,  we get
\[\|S(\frac{3}{2}+iy)P_\lambda\|_{L^1_{x,t}\rightarrow \fxt}
\lesssim \lambda^{-\frac{1}{2}}.\] Also notice the trivial bound
\[\|S(0+iy)P_\lambda\|_{\txt\rightarrow L^2_{x,t}}\lesssim C.\]
We interpolate to get
\[\|S(\frac{1}{2}+iy)P_\lambda\|_{L^{\frac{3}{2}}_{x,t}\rightarrow L^3_{x,t}}\lesssim \lambda^{-\frac{1}{6}}.\]
Define the operator $T$ by multiplier
$\frac{1}{(\tau-\frac{1}{4}\xi^3\pm i0)^{\frac{1}{4}}}$, and
$S(\frac{1}{2})=cTT^*$, $c=e^{\frac{1}{4}}\Gamma{(\frac{1}{2})}$.
So by the $TT^*$ argument~\cite{Tomas} ~\cite{Tao}, we have
$\|TP_\lambda\|_{\txt\rightarrow L^3_{x,t}}\lesssim
\lambda^{-\frac{1}{12}}$.
\\
Hence we get
\[\|\ul\|_{L^3_{x,t}}\lesssim \lambda^{-\frac{1}{3}-2s-2}\|\ul\|_{\K}.\]

If we take $q=3$ in (\ref{LqC}), combining with (\ref{L3K}) we get
(\ref{l3z}).

If we take $q=6$ in (\ref{LqC}), also compare with \ben
\|Q_{\sigma\approx\lambda^3}\ul\|_{L^6_{t,x}}\lesssim(\lambda\sigma)^{\frac{1}{6}}\|\ul\|_{L^3_{t,x}}\lesssim\lambda^{-2s-2+\frac{1}{3}}\|\ul\|_{\K},
\een we get (\ref{l6z}). From
Remark~\ref{functionspaceremark}(\ref{modulationmatch}), we only
put pieces in $\K$ norm when it lies close to the special curve,
and in that case its modulation is
 close to $\lambda^3$.
\end{proof}

Also, let us collect some bilinear estimates that will be very
useful in the next section.
\begin{prop}\label{biliinearprop}
For $\mu\gg\lambda\geq\alpha$, as before $\elt$ is the sharp
cutoff on time interval $I_\lambda$ of size $\Il=\lambda^{4s+3}$.
We have the following estimates:
 \beq\|\emt\um\vl\|_{\txt}\lesssim \mu^{-1-s}\lambda^{-s}\|\um\|_{\xmm}\|\vl\|_{\xll},\label{L2XX}\eeq
 \beq \|\emt P_{\approx \lambda}(\um\vm)\|_{\txt}\lesssim \mu^{-\frac{1}{2}-2s}\lambda^{-\frac{1}{2}}\|\um\|_{\xmm}\|\vl\|_{\xmm}
,\label{L2XXHHL}\eeq
\beq\|\elt \ul\va\|_{\txt}\lsm
\max{\{\lambda^{-\frac{1}{3}-2s-2}\alpha^{-\frac{1}{6}-s},\lambda^{-2-s}\alpha^{\frac{1}{2}-s}\}}\|\ul\|_{S[\ill]}\|\va\|_{X^1[\iaa]}.\label{L2SSX}\eeq

\end{prop}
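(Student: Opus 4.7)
The plan is to reduce each of the three estimates to combinations of the free-solution bilinear estimates (Propositions \ref{freebil} and \ref{propNC}) with the $L^p$ bounds in Proposition \ref{easyestimate}, via a dyadic decomposition in modulation. The governing heuristic is that a piece with modulation $\sigma \lesssim |I|^{-1}$ essentially behaves on $I$ like a superposition of free Airy solutions with controlled $L^2_x$ data (via Duhamel), whereas higher-modulation pieces admit direct $L^p$ control coming from the $X^{s,b}$ structure.

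For (\ref{L2XX}), I would split $\um = \sum_\sigma Q_\sigma \um$ and $\vl = \sum_\tau Q_\tau \vl$. When both $\sigma \lesssim |I_\mu|^{-1}$ and $\tau \lesssim |I_\lambda|^{-1}$, the sharp cutoffs $\emt$, $\elt$ represent each piece as a superposition of free solutions with $L^2_x$ data of comparable size, and the free high-low bilinear estimate (\ref{bilinearhighlow}) yields the desired $\mu^{-1}$ factor. When one modulation is higher, I would pair that factor in $L^2_{t,x}$ via (\ref{l2Asigma}) against an $L^\infty_{t,x}$-bound on the other (obtained from Bernstein applied to the $L^\infty_t L^2_x$ norm of a frequency-localized function). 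Dyadic summation in $\sigma,\tau$ produces (\ref{L2XX}). The proof of (\ref{L2XXHHL}) is structurally identical, with (\ref{bilinearhh}) replacing (\ref{bilinearhighlow}) in the low-modulation regime, since the output frequency $\lambda$ is much smaller than the two input frequencies $\mu$ and the opposite-sign hypothesis places the interaction in the oscillatory-separation regime.

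For (\ref{L2SSX}) the difficulty is that $\ul \in S[\ill]$ admits a three-part decomposition into an $L^2$-structure piece, a Bourgain $\B$-piece, and a $Z$-piece, and the bound inside the maximum records the best of two Hölder pairings. The estimate $\lambda^{-2-s}\alpha^{\frac{1}{2}-s}$ comes from $\|\ul \va\|_{\txt} \le \|\ul\|_{\txt}\|\va\|_{\fxt}$, combining (\ref{l2Asigma}), (\ref{L2B}), and (\ref{L2C}) for the respective modulation regimes of $\ul$ with the Bernstein-energy bound $\|\va\|_{\fxt} \lesssim \alpha^{\frac{1}{2}-s}\|\va\|_{X^1[\iaa]}$. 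The estimate $\lambda^{-\frac{1}{3}-2s-2}\alpha^{-\frac{1}{6}-s}$ comes from $\|\ul\|_{L^3_{t,x}}\|\va\|_{L^6_{t,x}}$, using (\ref{L3K})/(\ref{l3z}) for the $\K$-component of $\ul$ (which concentrates near the curve $\tau=\tfrac{1}{4}\xi^3$) and the $(q,r)=(6,6)$ Strichartz estimate (\ref{StrichartzX}) for $\va$. Taking the max ensures each modulation regime of $\ul$ may be routed through whichever pairing is more efficient.

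The main obstacle I anticipate is bookkeeping for sharp time truncations combined with the mismatch in interval sizes $|I_\mu|, |I_\lambda|, |I_\alpha|$. Because $S$ is stable under sharp cutoffs while $X^1$ requires the free-Airy extension of Remark \ref{functionspaceremark}, in (\ref{L2XX}) and (\ref{L2XXHHL}) one must first subdivide the longer interval into pieces at the shorter scale before invoking Propositions \ref{freebil} and \ref{propNC}; the subinterval count is absorbed by the $|I|^{-1/2}$ gain built into the $X^1$ norm. In (\ref{L2SSX}) the analogous care must be taken when restricting $\va \in X^1[\iaa]$ down to the shorter interval $\ill$, and when splitting the $\B$-piece of $\ul$ under sharp truncations so as not to lose derivatives. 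Once this bookkeeping is set up, the three estimates reduce to dyadic summations over the modulation indices, all of which converge thanks to the strict inequalities among the exponents in the target bounds.
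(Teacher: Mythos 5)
Your treatment of (\ref{L2SSX}) is essentially the paper's proof: decompose the $S$-part of $\ul$ by modulation into its $\B$, $\C$ and $\K$ components, pair the $\B$ and $\C$ pieces in $L^2_{t,x}\times L^\infty_{t,x}$ against the Bernstein bound $\|\va\|_{L^\infty_{t,x}}\lesssim\alpha^{\frac12-s}\|\va\|_{X^1[\iaa]}$, pair the $\K$ piece in $L^3_{t,x}\times L^6_{t,x}$ using (\ref{L3K}) and the $L^6$ Strichartz bound for $\va$, and take the maximum of the resulting coefficients. That part is fine.

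For (\ref{L2XX}) and (\ref{L2XXHHL}) there is a genuine gap. The paper does \emph{not} split by modulation: it expands the full $X^1[I]$ functions by Duhamel, $u(t)=e^{-(t-t_0)\partial_x^3}u(t_0)+\int\chi_{\{s<t\}}e^{-(t-s)\partial_x^3}f(s)\,ds$, so that $\um\vl$ becomes a superposition of products of (sharply truncated) free solutions with total data mass $\mu^{-s}\|\um\|_{\xmm}\cdot\lambda^{-s}\|\vl\|_{\xll}$, and then applies (\ref{bilinearhighlow}) resp.\ (\ref{bilinearhh}) to each pair; no separate high-modulation case arises, because the Duhamel representation already captures all modulations. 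Your decomposition creates such a case and then handles it in a way that does not close: pairing $Q_\sigma\um$ in $L^2_{t,x}$ via (\ref{l2Asigma}) against $\|\vl\|_{L^\infty_{t,x}}\lesssim\lambda^{\frac12-s}\|\vl\|_{\xll}$ and summing over $\sigma\gtrsim|I_\mu|^{-1}$ yields
\begin{equation*}
\sum_{\sigma\gtrsim\mu^{-4s-3}}\sigma^{-1}\mu^{-s}|I_\mu|^{-\frac12}\lambda^{\frac12-s}\ \approx\ \mu^{s+\frac32}\lambda^{\frac12-s},
\end{equation*}
which exceeds the target $\mu^{-1-s}\lambda^{-s}$ by the factor $\mu^{2s+\frac52}\lambda^{\frac12}\gg 1$ throughout the relevant range $s\ge-\frac45$ (and the symmetric pairing with the high modulation on $\vl$ fails similarly). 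The point is that the $L^2\times L^\infty$ H\"older pairing discards the bilinear gain $\mu^{-1}$ (resp.\ $\mu^{-\frac12}\lambda^{-\frac12}$), which is exactly what these estimates are asserting; to recover it you must keep the high-modulation contribution inside the Duhamel integral and apply the free bilinear estimate there, as in Lemma 3.4 of \cite{CKSTT2}. Dropping the modulation splitting entirely and running the Duhamel argument on the whole of $\um$ and $\vl$ repairs the proof.
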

\begin{proof} For
(\ref{L2XX})  and (\ref{L2XXHHL}), we expand $u, v$ via Duhamel's
formula, and apply the bilinear estimates (\ref{bilinearhighlow})
(\ref{bilinearhh}) repeatedly. See~\cite{CKSTT2} Lemma 3.4 for a
similar proof.

For (\ref{L2SSX}), we still break $\ul$ by the size of modulation,
and see that the worst estimate comes when $\ul\in\K\cap\LP$. Then
we use $L^3$ for $\ul$, and $L^6$ for $\va$. \ben\|\el
\ul\va\|_{\txt}\lesssim\|\el\ul\|_{\txt}\|\ea\va\|_{\fxt}\lesssim
\lambda^{-2s-3}\alpha^{\frac{1}{2}-s}\|\ul\|_{\C[\ill]}\|\va\|_{\xaa},\een
\ben\|\el
\ul\va\|_{\txt}\lesssim\|\el\ul\|_{L^3_{t,x}}\|\ea\va\|_{L^6_{t,x}}\lesssim
\lambda^{-\frac{1}{3}-2s-2}\alpha^{-\frac{1}{6}-s}\|\ul\|_{\K[\ill]}\|\va\|_{\xaa}.\een
By comparing the coefficients in the   estimates above, we get
\beq\|\elt \ul\va\|_{\txt}\lsm
\lambda^{-\frac{1}{3}-2s-2}\alpha^{-\frac{1}{6}-s}\|\ul\|_{Z[\ill]}\|\va\|_{X^1[\iaa]}.\label{L2SX}\eeq

If we also consider the case of $\ul\in \B$,
\beq\label{L2BXHL}\|\el
\ul\va\|_{\txt}\lesssim\|\el\ul\|_{L^2_{t,x}}\|\ea\va\|_{L^\infty_{t,x}}\lesssim
\lambda^{-2-s}\alpha^{\frac{1}{2}-s}
\|\ul\|_{\B[\ill]}\|\va\|_{\xaa}\eeq and compare the coefficients,
we get (\ref{L2SSX}).
\end{proof}
\begin{remark}\label{remarkonSS}
We don't have a good $L^2$ estimate on the product of two pieces
both in  $S$. But we will still list here some of the cases, which
are manageable.


When $\ul, \va\in \B$, bound $\ul$ in $\txt$, and $\va$ in $\fxt$.
\beq\label{L2BB}\|\el
\ul\va\|_{\txt}\lsm\lambda^{-2-s}\alpha^{-2s-1}\|\ul\|_{\B[\ill]}\|\va\|_{\B[\iaa]}.\eeq

When $\ul, \va\in Z$, bound $\ul$ in $L^3$, and $\va$ in $L^6$, we
get \beq\|\elt \ul\va\|_{\txt}\lsm
\lambda^{\threez}\alpha^{\sixz}\|\ul\|_{\zll
}\|\va\|_{\zaa}.\label{L2SS}\eeq

When $\ul\in Z, \va\in\B$, bound $\ul$ in $L^3$, and $\va$ in
$L^6$ which comes from Bernstein together with $L^2$ bound, we get
\beq\label{L2ZHBL}\|\el \ul\va\|_{\txt}\lsm\lambda^{\threez}
\max{\{\alpha^{-s-1},\alpha^{-\frac{5}{3}-2s}\}}\|\ul\|_{Z[\ill]}\|\va\|_{\B[\iaa]}.\eeq
The above three inequalities imply that \beq\|\elt
\ul\va\|_{\txt}\lsm
\lambda^{\threez}\alpha^{\sixz}\|\ul\|_{S[\ill]
}\|\va\|_{S[\iaa]}.\label{L2SSS}\eeq is true except for the case
$\ul\in \B, \va\in Z$, which corresponds to case the
high-frequency low modulation interacting with low-frequency high
modulation.

To estimate the case  $\ul\in \B, \va\in Z$, use $L^2$ on $\ul$,
$L^\infty$ on $\va$, and we get \beq\label{l2ulbvaz} \|\el
\ul\va\|_{\txt}\lsm \|
\el\ul\|_{\txt}\|\el\va\|_{L^\infty_{t,x}}\eeq \ben
\hspace{1.6in}\lsm\lambda^{-2-s}\alpha\|\ul\|_{\B[\ill]}\|\va\|_{Z[\iaa]}.\een
The bound here is worse than the one in (\ref{L2SSS}).
\end{remark}

\section{Estimating the nonlinearity }
\label{sec4}

The goal of this part is to estimate the nonlinearity as in
Proposition~\ref{bilinearprop}. Since functions in $\xx$ have
different piece measured differently, we show that the estimate
\beq\|\partial_x(uv)\|_{Y^s\cap Y^s_{le}}\lesssim \|u\|_{X^s\cap
X^s_{le}}\|v\|_{X^s\cap X^s_{le}}\label{primaryestimate}\eeq is
almost true except for one special case.

Let us expand the estimate (\ref{primaryestimate}), the energy
norm takes the form \ben &&\sum_{\lambda\geq
1}\sup_{|J|=\lambda^{4s+3}, J\subset
[0,1]} \| \eta_{J}(t)P_\lambda(\partial_x(uv))\|_{Y_{\lambda}[J]}^2\\
&\lesssim& \sum_{\lambda\geq 1}\sup_{|J|=\lambda^{4s+3},J\subset
[0,1] } \| \eta_{J}(t)\lambda \sum_{\alpha\ll\lambda} P_\lambda(\ul\va)\|_{Y_{\lambda}[J]}^2\\
&+& \sum_{\lambda\geq 1}\sup_{|J|=\lambda^{4s+3},J\subset [0,1]}
\| \eta_{J}(t)\lambda
\sum_{\mu\gtrsim\lambda}P_\lambda(\um\vm)\|_{Y_{\lambda}[J]}^2.
\een We can do same expansion for the local energy norm.

In the case of high-low frequency interaction, our goal would be
to prove \beq\|\lambda   \ej(t) P_\lambda(\ul\va)\|_{\yl[J
]}\lesssim  C \|  \ul\|_{\xl[J]} \|
\va\|_{X_\alpha[K]}.\label{provehl}\eeq Here
$C=C(\lambda,\alpha)\lesssim 1$, and $K$ is a time interval with
size $ \alpha^{4s+3}$, so that $J\subset K $.

Now given (\ref{provehl}), we get bound for energy norm in the
case of high-low interaction \ben &&\sum_{\lambda\geq
1}\sup_{|J|=\lambda^{4s+3},J\subset [0,1]} \|
\eta_{J}(t)\lambda\sum_{\alpha\ll\lambda}P_\lambda(\ul\va)\|_{Y_{\lambda}[J]}^2
\\
&\lesssim& \sum_{\lambda\geq
1}C(\lambda,\alpha)^2\sup_{|J|=\lambda^{4s+3},J\subset [0,1]}  \|
\ul\|^2_{\xl[J]}\sum_{\alpha\ll\lambda}\|\va\|_{X^s}^2
\\
&\lesssim& \|u\|_{X^s}^2\|v\|_{X^s}^2. \een And we can prove  a
spatial localized version of (\ref{provehl}) in exactly the same
way. \beq \|\lambda   \chi^\lambda_j(x)\ej(t)
P_\lambda(\ul\va)\|_{\yl[J ]}\lesssim  C \| \chi^\lambda_j(x)
\ul\|_{\xl[J]} \|   \va\|_{X_\alpha[K]}.\label{provehlspatial}
\eeq Then we also get bound for local energy norm in the case of
high-low interaction \ben &&\sum_{\lambda\geq
1}\sup_j\sum_{|J|=\lambda^{4s+3},J\subset
[0,1]}\|\chi^{\lambda}_j(x)\eta_{J}(t)\lambda\sum_{\alpha\ll\lambda}P_\lambda(\ul\va)\|_{Y_{\lambda}[J]}^2
\\
&\lesssim& \sum_{\lambda\geq
1}C(\lambda,\alpha)^2\sup_j\sum_{|J|=\lambda^{4s+3},J\subset
[0,1]} \|\chi^{\lambda}_j(x)
\ul\|^2_{\xl[J]}\sum_{\alpha\ll\lambda}\|\va\|_{X^s}^2
\\
&\lesssim& \|u\|_{X^s_{le}}^2\|v\|_{X^s}^2. \een
%
%
%
One remark here is that we secretly turn the summation of $\alpha$
from $l^1$ to $l^2$ summation, which is not true in general.
Luckily, in our proof for (\ref{provehl}), the bound
$C(\lambda,\alpha)$ mostly involves negative power of $\alpha$ or
$\lambda$, which makes the summation valid. The only case worth
attention is case 1.1(b), where we illuminate  the $\alpha$
summation in detail.

In the case of high-high frequency interaction, we need to measure
each $\um$ on smaller time interval $\imm\subset J$, of size
$|\imm |=\mu^{4s+3}$.

We will prove the estimate \beq\| \lambda \ej
P_\lambda(\um\vm)\|_{\yl[J ]}\lesssim C
\hspace{.1in}\sup_{\imm\subset J} \| \um\|_{X_\mu[\imm]} \|
\vm\|_{\xx},\label{provehh}\eeq and its corresponding spatial
localized version \beq\| \lambda \ej\chi_j^\lambda(x)
P_\lambda(\um\vm)\|_{\yl[J ]}\lesssim C
\hspace{.1in}\sup_{\imm\subset J} \|\chi^{\mu}_{k(j)}
\um\|_{X_\mu[\imm]} \| \vm\|_{\xx}.\label{provehhspatial}\eeq Here
$\chi_{k(j)}^\mu(x)$ is a chosen spatial cutoff so that
$\chi_{j}^\lambda(x)\leq \chi_{k(j)}^\mu(x)$
  (we might need  two adjacent spatial cutoffs), $C=C(\lambda,\mu)\lesssim 1$.

Given (\ref{provehh}), we get the bound for energy norm in the
case of high-high interaction \ben &&\sum_{\lambda\geq
1}\sup^{|J|=\lambda^{4s+3}}_{J\subset [0,1]} \| \lambda
\eta_{J}(t)
\sum_{\mu\gtrsim\lambda}P_\lambda(\um\vm)\|_{Y_{\lambda}[J]}^2
\\
&\lesssim& \sum_{\lambda\geq 1}
C(\lambda,\mu)^2\left(\sum_{\mu\gtrsim\lambda}\sup^{|\imm|=\mu^{4s+3}}_{\imm\subset
[0,1]} \|  \um\|^2_{X_\mu[\imm]}\right)\sum_{\mu\gtrsim\lambda}\| \vm\|^2_{\xx}\\
&\lesssim& \| \um\|^2_{X^s}\| \vm\|^2_{\xx}. \een And with
(\ref{provehhspatial}), we can bound the local energy norm in the
case of high-high interaction \ben &&\sum_{\lambda\geq
1}\sup_j\sum_{|J|=\lambda^{4s+3},J\subset
[0,1]}\|\chi^{\lambda}_j(x)\eta_{J}(t)\lambda
\sum_{\mu\gtrsim\lambda}P_{\lambda}(\um\vm)\|_{Y_{\lambda}[J]}^2
\\
&\lesssim& \sum_{\lambda\geq
1}C(\lambda,\mu)^2\left(\sum_{\mu\gtrsim\lambda}\sup_{k(j)}\sum_{|\imm|=\mu^{4s+3},
\imm\subset [0,1]}\|\chi^{\mu}_{k(j)}(x)
  \um\|^2_{X_\mu[\imm]}\right)\sum_{\mu\gtrsim\lambda}\| \vm\|^2_{\xx}\\
&\lesssim& \| \um\|^2_{X^s_{le}}\| \vm\|^2_{\xx}. \een In both of
the estimates, we need change the order of $\lambda$, $\mu$
summation. Luckily the bound $C(\lambda,\mu)$ in (\ref{provehh})
(\ref{provehhspatial}) will help us to perform the $\lambda$
summation.


Since   the proofs for (\ref{provehlspatial})
(\ref{provehhspatial})
 are essentially the same as (\ref{provehl}) (\ref{provehh}). We will discard the spatial cutoff in our proofs unless needed.
\begin{remark} To be more precise, for  spatial localization, instead of  writing a function as $\ul=\sum_j\chi_j^\lambda(x)\ul$,
we need to decompose each function as
\[\ul=\sum_j u_{\lambda,j}, \hspace{.5in} u_{\lambda,j}=P_\lambda(\chi_j^\lambda \ul).\]
In this way, we preserve the frequency localization while blurring
the spatial localization. But thanks  to the fast decay property
of the kernel of $\chi^\lambda_k(x)P_\lambda \chi^\lambda_j(x)$,
we have
\[|\chi_k^\lambda u_{\lambda,j}|\lesssim |k-j|^{-N}\lambda^{-N}\|\chi_j^\lambda\ul\|_{L^\infty_tL^2_x}, \hspace{0.3in}{|k-j|\gg 1}.\]
So the difference of the two decompositions is really negligible.
Similar reasoning applies when we interchange the modulation
localization and time localization.
\end{remark}
%

Before getting into detail, notice that
$\widetilde{uv}(\tau,\xi)=\widetilde{u}(\tau_1,\xi_1)\ast\widetilde{v}(\tau_2,\xi_2)$,
so we have \[\tau=\tau_1+\tau_2,\hspace{0.2in}\xi=\xi_1+\xi_2,\]
and the resonance identity
\begin{equation}
\tau-\xi^3=(\tau_1-\xi_1^3)+(\tau_2-\xi_2^3)-3\xi\xi_1\xi_2.
\end{equation}
Also, the following high modulation relation is quite useful in
our proof.
\begin{equation}
\sm=\max(|\tau-\xi^3|, |\tau_1-\xi_1^3|, |\tau_2-\xi_2^3|)\gtrsim
|\xi\xi_1\xi_2|. \label{HiMode}
\end{equation}
This relation forces high modulation either on the input or on
output, which gives a gain.
\subsection{Estimate for $X^1\times X^1$.} When $u, v\in X^1$, we
break them into dyadic pieces and discuss the problem in different
cases. As pointed out in Remark
~\ref{functionspaceremark}(\ref{remarkonx}), for function $\ul\in
X^1[I_\lambda]$, $|I_\lambda|=\lambda^{4s+3}$, we think of it as
its extension $u_{\lambda,E}$, which is defined on the whole real
time line and still  supported on neighborhood of $I_\lambda$.

\noindent\textbf{Case 1.1:} High-Low frequency interaction.
Suppose $\lambda \gg \alpha$, then the output frequency is
$\lambda$. From~(\ref{HiMode}), let $M=\lambda^2\alpha$, then
\[\lambda \el \ua\vl=\sum_{Q_i\in \{Q_{\gtrsim M}, Q_{\ll
M}\}}\lambda\el Q_1[Q_2 \ua Q_3 \vl].\] Clearly in each term, at
least one of $Q_i$ must be $\QM$.

\noindent\textbf{Case 1.1(a):} When high modulation comes from
input,
 simply   bound that piece in $L^2$  and the other in $L^\infty$. Combining with Bernstein inequality, we get
\ben \|\lambda \el Q_1[Q_{2} \ua Q_3
\vl]\|_{Y_\lambda[\ill]}&\lesssim&
\|\lambda\el Q_1[Q_{2}\ua Q_{3}\vl]\|_{\DI[\ill]}\\
&\lesssim
&\lambda^{}\alpha^{-s}M^{-1}(\alpha^{\frac{1}{2}}+\lambda^{\frac{1}{2}})
\|\ua\|_{X^1[\iaa] }\|\vl\|_{X^1[\ill] }\\
&\lesssim& \lambda^{-\frac{1}{2}}\alpha^{-1-s}\|\ua\|_{X^1[\iaa]
}\|\vl\|_{X^1[\ill] }. \een For $s\geq -1$ we can sum up with
respect to $\alpha$ then $\lambda$.

\noindent\textbf{Case 1.1(b):} If none of $Q_2, Q_3$ have high
modulation, this forces $Q_1=Q_{\approx M}$. Depends on the size
of $M$, we bound the output in different spaces ($\DI$ or
$X^{-s,s}$).
Using the bilinear estimate (\ref{L2XX}), we have \ben\|\lambda
\el Q_{\approx M\lesssim \lambda^{4+\frac{3}{2s}}}[Q_{2}\ua Q_3
\ul]\|_{Y_\lambda[\ill]}
&\lesssim&\lambda^{1+s}\Il^{\frac{1}{2}}\lambda^{-1-s}\alpha^{-s}\|\ua\|_{X^1[I_\alpha]}\|\vl\|_{X^1[I_\lambda]}
\\
&\lesssim&
\|\ua\|_{X^1[I_\alpha]}\|\vl\|_{X^1[I_\lambda]}\label{l1l2},\een
\ben \|\lambda \el Q_{\approx M\geq
\lambda^{4+\frac{3}{2s}}}[Q_{2}\ua Q_3 \ul]\|_{Y_\lambda[\ill]}
&\lesssim&\lambda^{1-s}M^{s}\lambda^{-1-s}\alpha^{-s}\|\ua\|_{X^1[I_\alpha]}\|\vl\|_{X^1[I_\lambda]}
\\
&\lesssim&
\|\ua\|_{X^1[I_\alpha]}\|\vl\|_{X^1[I_\lambda]}.\label{carefulsummation}
\een
\begin{remark} We need to be careful with $\alpha$ summation in above estimates. For the first one we use factor $\alpha^s$ to turn $l^1$ summation to $l^2.$
A careful way of doing the second one is to write the modulation
as a multiple of $\lambda^2\alpha$, and use the $l^2 $ summability
of modulation.
\begin{eqnarray*}\|\sum_{\alpha\ll\lambda}\sum_{\theta}\lambda\el Q_{
(\lambda^2\alpha)\theta}(u_\lambda v_\alpha)\|^2_{\yl} &\lsm&
(\sum_{\theta} \|\sum_{\alpha\ll\lambda}\lambda \el Q_{
(\lambda^2\alpha)\theta}(u_\lambda v_\alpha)\|_{DS})^2\\
&\lsm& \{  \sum_{\theta} (\sum_{\alpha\ll\lambda}\|\lambda\el Q_{
(\lambda^2\alpha)\theta}(u_\lambda v_\alpha)\|_{DS}^2)^{1/2}\}^2
\\ &\lesssim&
\{\sum_{\theta}(\sum_{\alpha\ll\lambda}\theta^{2s}\|
u_\lambda\|^2_{X^1[\ill]}\| v_\alpha\|^2_{X^1[\iaa]})^{1/2}\}^2
\\
&\lesssim&
(\sum_{\theta}\theta^{s})^{\frac{1}{2}}\|u_\lambda\|_{X^1[I_\lambda]}^2\|v
\|^2_{X^s}.
\end{eqnarray*}
In second inequality, since the modulation is different, we do
have the $l^2$ summation.
\end{remark}

\noindent\textbf{Case 1.2:} High-High frequency interaction with
low frequency output, $\lambda\ll \mu$. Here we need to cut the
interval $I_\lambda$ into finer scale so that $\um$ is measured on
smaller intervals $\imm$.
%
%
\[\um=\sum_i \um^i,\hspace{0.2in}\um^i\in X^1[I_\mu^i],\hspace{0.2in}\cup I^i_{\mu}=I_\lambda.\]
Then the output has the expression
\[\lambda\el \um\vm=\sum_{i}\sum_{Q_j\in \{Q_{\gtrsim \lambda\mu^2}, Q_{\ll \lambda\mu^2}\}}\lambda Q_1[Q_2 \um^i Q_3 \vm^i ].\]
\noindent\textbf{Case 1.2(a):} When $Q_1=Q_{\gtrsim
\lambda\mu^2}$, we place the output in   $DZ[I_\lambda]$, by using
(\ref{L2XXHHL}) \ben
 \|   Q_2 \um^i  Q_3 \vm^i\|_{\txt}
\lesssim \lambda^{-\frac{1}{2}}\mu^{-\frac{1}{2}}\mu^{-2s}
\|\um\|_{X^1[I_\mu^i]}\|\vm\|_{X^1[I_\mu^i]} \een and the almost
orthogonality of  the product $\lambda Q_{\sigma}(\um^i\vm^i)$
with $\lambda Q_{\sigma}(\um^j\vm^j)$, we get \ben
\|\sum_{i}\lambda Q_{\sigma\gtrsim \lambda\mu^2}[Q_2 \um^i Q_3
\vm^i]\|_{\DC[I_\lambda]}
 &\lesssim&\lambda^{2-4s}(\sigma_{\gtrsim
\lambda\mu^2})^{2s+1}|\frac{\ill}{\imm}|^{\frac{1}{2}}\|\um^i
\vm^i \|_{\txt}
\\
&\lesssim&
\sup_{\imm^i\subset \ill}\|\um\|^2_{X^1[I_\mu^i]}\|\vm\|^2_{X^s}.
\een The $DZ$ norm also has the $L^p$ component.  Here because the
modulation is high, we can interchange interval and modulation
cutoff
 and have $l^p$ summation of the intervals. Using Strichartz estimates (\ref{StrichartzX})
  and Bernstein inequality on the product, we get
\ben &&\|\sum_{i}\lambda Q_{\sigma\gtrsim \lambda\mu^2}[Q_2 \um^i
Q_3 \vm^i]\|_{\DLP[I_\lambda]}
\\ &\lesssim&\sup_{\imm^i}\|\lambda
 Q_{\sigma\gtrsim \lambda\mu^2}[Q_2\um^i Q_3  \vm^i]\|_{\DLP[I^i_\mu]}
\\
&\lesssim& \sup_{\imm^i}\frac{\lambda}{\lambda\mu^2}
 \|  \um\|_{L^\infty_tL^2_x[\imm^i]}\|\vm\|_{L^\infty_tL^2_x[\imm^i]}\\
&\lesssim&\mu^{{-2(s+1)} }\sup_{\imm\subset
\ill}\|\um\|_{X^1[I_\mu]}\|\vm\|_{X^s}. \een Because of the
summation on $\lambda$ here, we have only $s>-1$ in
Proposition~\ref{bilinearprop}, but not at the endpoint $s=-1$.

\noindent\textbf{Case 1.2(b):} When input has high modulation,  we
use the local energy space to get good control of the interval
summation.

Before that, let us state a useful lemma:
\begin{lemma}\label{combinecases} Suppose $-1\leq s\leq -\frac{3}{4}$, \hspace{0.1in}$0\leq k\leq \frac{1}{2}$, \hspace{0.05in}then we have
\[\|  Q_{\sigma\gtrsim\lambda^3}f_\lambda\|_{DZ[\ill] }\lesssim \sup_{\sigma\gtrsim\lambda^3}\lambda^{2s+3k}\sigma^{-k}\|Q_{\sigma}f_\lambda\|_{\txt[\ill]},\]
\[\| f_\lambda\|_{\yl[\ill] }\lesssim \sup_{\sigma}\lambda^{3s+\frac{3}{2}+3k}\sigma^{-k}\|Q_{\sigma}f_\lambda\|_{\txt[\ill]}.\]
\end{lemma}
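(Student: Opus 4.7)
I would prove both inequalities modulation-by-modulation: for each dyadic modulation scale $\sigma$ I estimate $\|Q_\sigma f_\lambda\|_{DZ}$ (resp.\ $\|Q_\sigma f_\lambda\|_{\yl}$) by the right-hand side at that $\sigma$, and then assemble the bound on the full $Q_{\sigma\gtrsim\lambda^3}f_\lambda$ (resp.\ on $f_\lambda$) by dyadic summation in $\sigma$. Because the weight $\sigma^{-k}$ gives geometric decay and the $X^{s,b}$-type components are $\ell^2$-orthogonal in modulation, the resulting dyadic sum is dominated by the supremum on the right.

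For the first inequality, write $Q_\sigma f_\lambda=\LL Q_\sigma u_\lambda$, so $\|Q_\sigma u_\lambda\|_{\txt}\approx\sigma^{-1}\|Q_\sigma f_\lambda\|_{\txt}$. The hypothesis $\sigma\gtrsim\lambda^3$ gives both $|\tau-\xi^3|\sim\sigma$ and $|\tau-\tfrac14\xi^3|\sim\sigma$, so each component of the $Z$ norm of $Q_\sigma u_\lambda$ can be evaluated directly: the $\C$ component contributes a coefficient $\lambda^{-3-4s}\sigma^{2s+1}$; the $\K$ component contributes $\lambda^{2s+9/4}\sigma^{-3/4}$; and the $\LP$ component contributes $\lambda^{-1/2}\sigma^{-1/2}$ via Bernstein in $x,t$. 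Under $\sigma\geq\lambda^3$, $0\leq k\leq\tfrac12$, and $-1\leq s\leq -\tfrac34$, each of these three coefficients is $\leq\lambda^{2s+3k}\sigma^{-k}$ by a direct comparison of exponents; the $\LP$ case is precisely where the lower restriction $s\geq -1$ enters.

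For the second inequality I would split $f_\lambda$ by modulation and place each piece into the cheaper summand of $\yl=\DI+DS$. Pieces with $\sigma\lesssim\lambda^{4+3/(2s)}$ go into $\DI$, which (using $|\ill|=\lambda^{4s+3}$) contributes the $\sigma$-independent coefficient $\lambda^{-s-3/2}$; pieces in the middle range $\lambda^{4+3/(2s)}\lesssim\sigma\lesssim\lambda^3$ go into the $\B$ summand of $DS$, contributing $\lambda^{-s}\sigma^{s}$; and pieces with $\sigma\gtrsim\lambda^3$ go into the $DZ$ summand, where the first inequality just proved applies. The cut-off $\sigma\sim\lambda^{4+3/(2s)}$ is precisely the scale at which the $\txt$ and $\B$ parts of $S$ are defined to balance (cf.\ Remark~\ref{functionspaceremark}(\ref{modulationmatch})), so the three regimes glue together without loss, and in each of them the coefficient is bounded by $\lambda^{3s+3/2+3k}\sigma^{-k}$ by another exponent comparison.

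The only real difficulty is the book-keeping: verifying the exponent inequalities at the endpoints $k\in\{0,\tfrac12\}$, $s\in\{-1,-\tfrac34\}$, and at the transitions between modulation regimes, while making sure that the dyadic sum in $\sigma$ produces no logarithmic loss. The hypothesis $s\leq -\tfrac34$, which forces $|\ill|=\lambda^{4s+3}\leq 1$, together with the precise shape of the weight $\lambda^{2s+3k}\sigma^{-k}$, is what makes every comparison tight and every dyadic sum geometric.
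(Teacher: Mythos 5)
Your plan is the paper's own proof, which is given there in only two sentences: split $f_\lambda$ into dyadic modulation pieces, put each piece into the cheapest summand of the relevant sum space, reduce the $\LP$-type component to $L^2$ by space--time Bernstein, and compare exponents. Your three $DZ$ coefficients ($\lambda^{-3-4s}\sigma^{2s+1}$, $\lambda^{2s+9/4}\sigma^{-3/4}$, $\lambda^{-1/2}\sigma^{-1/2}$) are correct, and you correctly identify the $L^\infty_{t,x}$ component as the sole source of the restriction $s\geq-1$. One computation needs fixing, though: the $\DI$ summand of $\yl$ weights an inhomogeneous term at frequency $\lambda$ by $\lambda^{s}\Il^{\frac{1}{2}}=\lambda^{3s+\frac{3}{2}}$ --- this is exactly the prefactor $\lambda^{1+s}\Il^{\frac{1}{2}}$ that appears in front of the $\txt$ norms in Cases 1.1(b) and 1.3(a), once the extra $\lambda$ from $\partial_x$ is stripped off --- and not $\lambda^{-s-\frac{3}{2}}$. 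The two agree only at $s=-\frac{3}{4}$, and with your value the asserted comparison against $\lambda^{3s+\frac{3}{2}+3k}\sigma^{-k}$ would actually be false at $k=0$ for every $s<-\frac{3}{4}$; with the correct weight that piece is $\sigma$-independent and sits exactly at the target for $k=0$, so the argument closes.

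Two smaller remarks. When $\sigma\approx\lambda^3$ the output may lie near the curve $\tau=\frac{1}{4}\xi^3$, so $|\tau-\frac{1}{4}\xi^3|$ need not be comparable to $\sigma$; this is harmless because $Z$ is a sum space in its first factor, so bounding the $X^{-3-4s,2s+1}$ and $\LP$ components alone (which you do) already controls the $DZ$ norm. And at the borderline combination $k=-2s-1$ (e.g.\ $s=-\frac{3}{4}$, $k=\frac{1}{2}$) every dyadic scale saturates your per-scale bound, so neither geometric decay nor modulation orthogonality literally reduces the sum over $\sigma$ to the supremum; this looseness is present in the paper's statement as well and is absorbed in the applications, where $\|Q_\sigma f_\lambda\|_{\txt}$ carries its own decay in $\sigma$.
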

\begin{proof} From the definition of $\yl[\ill]$, we just need to bound different modulation in suitable spaces, and compare the bounds with the ones in our lemma.
The $DZ$ norm also has  $L^p$ component, we use Bernstein to turn
$L^p$ into $L^2$ norm.
%
\end{proof}
\begin{remark}
These estimates are very crude. When applying on the nonlinearity,
we might need to do modulation analysis, or use better interval
summation in some cases, e.g. case 1.2(a). But when one of the
inputs has high modulation, a simple $L^2$ estimate saves us from
tedious case by case calculation.
\end{remark}
Let us first bound the spatial localized output in $L^2$.
%
\ben & & \|\lambda\el\cljx
Q_\sigma[\sum_{i}(Q_{\gtrsim\lambda\mu^2}\um^i)
(Q_3\vm^i)]\|^2_{\txt[I_\lambda]}
\\
&\lesssim&  \sigma \|\lambda \el\cljx [\sum_{i}(Q_{\gtrsim\lambda\mu^2}\um^i) ( Q_3\vm^i)]\|^2_{L^2_xL^1_t}\\
&\lesssim& \lambda^2\sigma\sum_i\|\cljx
Q_{\gtrsim\lambda\mu^2}\um^i\|^2_{\txt[I_\mu^i]}
\sum_i\|\cljx Q_3 \vm^i\|^2_{\fxtt[I_\mu^i]}\\
&\lesssim& \lambda^2\sigma |\frac{I_\lambda}{I_\mu}|\sup_i \|\cljx
Q_{\gtrsim\lambda\mu^2} \um^i
\|^2_{\txt[I^i_\mu]}\sup_j\sum_i\|\cljx Q_3 \vm^i\|^2_{\fxtt[I^i_\mu]}\\
&\lesssim&\sigma\lambda^{4s+3}\mu^{-12s-12}\sup_{\imm}\|\chi^\mu_{k(j)}(x)\um\|^2_{X^1[I_\mu]}\|\vm\|^2_{X^s_{le}}.
\een To get same estimate without the spatial localization, we
need to sum up $j$ \ben & & \sum_j\|\lambda\el\cljx
Q_\sigma[\sum_{i}(Q_{\gtrsim\lambda\mu^2}\um^i)
(Q_3\vm^i)]\|^2_{\txt[I_\lambda]}
\\
&\lesssim&\lambda^2\sigma\sum_{i,j}\|\cljx Q_{\gtrsim\lambda\mu^2}\um^i\|^2_{\txt[I^i_\mu]}\sup_j\sum_i\|\cljx Q_3 \vm^i\|^2_{\fxtt[I^i_\mu]}\\
&\lesssim& \lambda^2\sigma |\frac{I_\lambda}{I_\mu}|\sup_i  \|
Q_{\gtrsim\lambda\mu^2} \um^i
\|^2_{\txt[I^i_\mu]}\sup_j\sum_i\|\cljx Q_3 \vm^i\|^2_{\fxtt[I^i_\mu]}\\
&\lesssim&\sigma\lambda^{4s+3}\mu^{-12s-12}\sup_{\imm}\|
\um\|^2_{X^1[I_\mu]}\|\vm\|^2_{X^s_{le}}. \een By
Lemma~\ref{combinecases} $k=\frac{1}{2}$, we have the following
estimate with or without spatial localization. \ben \|\lambda \el
[\sum_{i}Q_{\gtrsim\lambda\mu^2}\um^i Q_3\vm^i]\|_{\yl[\ill]}
&\lesssim& \lambda^{5s+9/2}\mu^{-6s-6}\|
u_{\mu}\|_{X^1[I_\mu]}\|v_{\mu}\|_{X^s_{le}}. \een

We can sum up frequency $\lambda$ and $\mu$ when $-1\leq s\leq -\frac{3}{4}$.\\
\begin{remark} The estimates above demonstrate how we can use
local energy norm to get good interval summations, especially in
the case when time truncation blur the output modulation too much.
\end{remark}

\noindent\textbf{Case 1.3:} High-High frequency interaction giving
out the output of the same size. Now high modulation
(\ref{HiMode}) means $\lambda^3$.
\[\lambda \el \ul\vl=\sum_{Q_i\in
\{Q_{\gtrsim \lambda^3}, Q_{\ll \lambda^3}\}}\lambda\el Q_1[Q_2\ul
Q_3\vl].\] \noindent\textbf{Case 1.3(a):} When high modulation
comes from input, we estimate the output in $\DI$ \ben
\|\lambda \el Q_1[Q_{\sigma\gtrsim{\lambda^3}}\ul Q_3 \vl]\|_{\DI}&\lesssim & \lambda^{1+s}{\Il}^{\frac{1}{2}}\|Q_{\sigma\gtrsim{\lambda^3}}(\el\ul)\|_{\txt}\|\el\vl\|_{\fxt}\\
&\lesssim&\lambda^{-\frac{3}{2}-s}\|\ul\|_{X^1[\ill]}\|\vl\|_{X^1[\ill]}.
\een \noindent\textbf{Case 1.3(b):} When inputs have low
modulation, this forces the output to have modulation
approximately $\lambda^3$. In fact,  the output has  Fourier
support lying closer to another curve $\tau=\frac{1}{4}\xi^3$. To
give a good bound in this case, we want to prove \beq \|\lambda
P_\lambda(\ul\vl)\|_{\DK}\lesssim
\|\ul\|_{X^1[\ill]}\|\vl\|_{X^1[\ill]}.\label{goodNC} \eeq To do
this, let us  use another space~\cite{KT1}
\[\|u\|_{\dot{X}^{s,\frac{1}{2},1}}=\sum_{\vartheta} \left(\int_{|\tau-\xi^3|=\vartheta}|\tilde{u}(\tau,\xi)|^2\xi^{2s}|\tau-\xi^3|\,d\xi \,d\tau\right)^{\frac{1}{2}},\]
and claim the embedding inequality
\beq\|\ul\|_{\dot{X}^{s,\frac{1}{2},1}}\lesssim
\|\ul\|_{X^1[\ill]},\label{xembedding}\eeq which is proved by
looking at the extension $u_{\lambda,E}$,  and   definitions of
both norms.

Now for functions in $\dot{X}^{s,\frac{1}{2},1}$, we use
foliation. The idea is same as in Chapter 2.6 Lemma 2.9 in
Tao~\cite{Tao}. From Fourier inversion, we have
\[\ul(t,x)=\frac{1}{(2\pi)^2}\int \int \tilde{u}_{\lambda}(\tau,\xi)e^{it\tau+ix\xi}\,d\tau\,d\xi.\]
Then if we write $\tau_0=\tau-\xi^3$, we will have the foliation
\[\ul(t,x)=\frac{1}{2\pi}\int e^{it\tau_0}e^{t\partial_x^3}f_{\tau_0}\,d\tau_0,\]
where \[e^{t\partial_x^3}f_{\tau_0}=\frac{1}{2\pi}\int
\tilde{u}_\lambda(\tau_0+\xi^3,\xi)e^{it\xi^3+ix\xi}\,d\xi,\] and
$f_{\tau_0}$ has frequency about size $\lambda$, modulation about
size $\tau_0$.

Similarly we write down $\vl=\ul(t,x)=\frac{1}{2\pi}\int
e^{it\tau_0}e^{t\partial_x^3}g_{\tau_0'}\,d\tau_0'$.

Now using (\ref{bilinearNC2}) and Minkowski inequality \ben
\|\lambda
P_\lambda(\ul\vl)\|_{|\LL|^{-1}|D|^{-2s-2}\dot{X}^{\frac{1}{4},\frac{1}{4}}_{\tau=\frac{1}{4}\xi^3}}
&\lesssim& \frac{\lambda^{2s+3}}{\lambda^3}\sum_{\tau_0,\tau_0'}\int\!\!\int\|e^{t\partial_x^3}f_{\tau_0}e^{t\partial_x^3}g_{\tau_0'}\|_{\dot{X}^{\frac{1}{4},\frac{1}{4}}_{\tau=\frac{1}{4}\xi^3}}   \,d\tau_0\,d\tau_0'\\
&\lesssim&\lambda^{2s}\sum_{\tau_0,\tau_0'}\int\!\!\int \|f_{\tau_0}\|_{L^2_x}\|g_{\tau_0'}\|_{L^2_x}\,d\tau_0\,d\tau_0'\\
&\lesssim&
\|\ul\|_{\dot{X}^{s,\frac{1}{2},1}}\|\vl\|_{\dot{X}^{s,\frac{1}{2},1}}.
\een With the time cutoff we can pass to nonhomogeneous space, as
in Proposition \ref{propNC}. Combining with the embedding
(\ref{xembedding}), we proved (\ref{goodNC}).

\subsection{Estimate for $S\times S$.} When $u,v\in S$, we still need to consider different frequency interaction.
Notice that because of
Remark~\ref{functionspaceremark}(\ref{modulationmatch}), we only
consider pieces that have relatively high modulation:
$|\tau-\xi^3|\gtrsim|\xi|^{4+\frac{3}{2s}}$

\noindent\textbf{Case 2.1:}  High low frequency interaction.  The
nonlinearity look like $\lambda \el \ul\va$,  $\lambda\gg\alpha$.
As discussed in Remark~\ref{remarkonSS}, we don't have a good
bilinear estimate, but ($\ref{L2SSS}$) breaks down only for one
case.

\noindent\textbf{Case 2.1.1:} If $\ul, \va\in\B$, or $\ul\in Z,
\va\in \B$, or $\ul,\va\in Z$, we  can still use the $L^2$
estimate ($\ref{L2SSS}$) and Lemma~\ref{combinecases} with $k=0$
to get
\ben\|\lambda \el\ul\va\|_{\yl[\ill]}&\lesssim& \lambda^{3s+\frac{3}{2}}  \lambda^{1\threez}\alpha^{\sixz}\|\ul\|_{\zll}\|\va\|_{\zaa}\\
&\lesssim&\lambda^{\frac{1}{6}+s+\frac{2}{3}(s+1)}\alpha^{\sixz}\|\ul\|_{\zll}\|\va\|_{\zaa}.
\een Notice that the exponents add up to $-\frac{3}{2}-s<0$, we
can still sum up frequencies.

\noindent\textbf{Case 2.1.2:} Now if $\ul\in \B, \va\in Z$, where
(\ref{L2SSS}) failed. We use $L^2$ on $\ul$ and $L^3_tL^\infty_x$
on $\va$, still by Bernstein, \ben \|\lambda
Q_\sigma(\el\ul\va)\|_{\txt}\lesssim
\lambda\alpha^{\frac{1}{3}}\sigma^{\frac{1}{3}}\lambda^{-2-s}\alpha^{\threez}\|\ul\|_{\B[\ill]}\|\va\|_{Z[\iaa]},
\een so we from Lemma~\ref{combinecases}, we get \ben \|\lambda
\el\ul\va\|_{\yl[\ill]}\lesssim
\lambda^{2s+\frac{3}{2}}\alpha^{-\frac{4}{3}(s+1) }
\|\ul\|_{Z[\ill]}\|\va\|_{\B[\iaa]}. \een And we can still sum up
the frequencies.

%
%
%
\noindent{\textbf{Case 2.2:}} High-high frequency interaction
giving out equal or lower frequency, $\lambda\lesssim\mu$. When
$\lambda\ll\mu$, we cut up intervals as in case (1.2). When
$\lambda\approx\mu$, this procedure degenerate.
\[\lambda\el \um\vm=\sum_{i}\lambda \um^i\vm^i,\hspace{0.3in} \um^i,\vm^i\in X_\mu[\imm^i]\]
Here we don't have a good $L^2$ bound on the product, so we need
to do modulation analysis again to get better control. Also, all
the estimates here have the corresponding version with spatial
localization, the proofs are exactly the same.

\noindent{\textbf{Case 2.2.1:}} $\B\times\B$, both $\um^i$,$\vm^i$
have modulation $\mu^{4+\frac{3}{2s}}\lesssim\sigma\lesssim\mu^3$.
we use Berstein inequality for frequency on product, for
modulation on any one of input. And we have $l^2$ summation of the
small intervals. \ben \|\sum_{i}\lambda (Q_\sigma\um^i)
\vm^i\|_{\yl[\ill]}
&\lesssim&\lambda^{1+s}\Il^{\frac{1}{2}}|\frac{\ill}{\imm}|^{\frac{1}{2}}\sup_i\|(Q_\sigma\um^i )\vm^i\|_{\txt[\imm^i]}\\
&\lesssim&
\lambda^{1+s}\Il\Imu^{-\frac{1}{2}}\lambda^{\frac{1}{2}}\sigma^{\frac{1}{2}}\sup_i\|
\um^i\|_{\txt}\|  \vm^i\|_{\txt}
\\
&\lesssim&\lambda^{5s+\frac{9}{2}}\mu^{-5s-5}\sup_{\imm}\|\um\|_{S[\imm]}\|\vm\|_{X^s}.
\een \noindent{\textbf{Case 2.2.2:}} $\B\times Z$, suppose $\vm$
has modulation $\sm\gtrsim\mu^3$. By modulation analysis
(\ref{HiMode}), this forces another high modulation  on the
output.
\[\lambda\el \um\vm=\sum_{i}\lambda Q_{\approx \sm}[\um^i (Q_{\sm}\vm^i)]\]
We comment that when $\sm\approx\mu^3$, there is chance high
modulation can also fall on $\um$. But in that case, from
Prop~\ref{functionspaceremark}(\ref{modulationmatch}), the norm
$Z$ and $\B$ match with each other. So it is essentially the same
as in the following case 2.2.3.

We use $L^2$~(\ref{L2B}) on $\um$  , and $L^p$ for $\vm$, together
with Bernstein. \ben
&&\|\sum_{i}\lambda Q_{\sm}[\um^i (Q_{\sm} \vm^i)]\|_{\DC[\ill]}\\
&\lesssim&
\lambda^{-2-4s}\sm^{2s+1}|\frac{\ill}{\imm}|^{\frac{1}{2}}\sup_{i}\|\um^i (Q_{\sm} \vm^i)\|_{\txt[\imm]}\\
&\lesssim&\lambda^{-2-4s}\sm^{2s+1}|\frac{\ill}{\imm}|^{\frac{1}{2}}(\lambda\sm)^{\frac{1}{p}}\sup_i
\| \um^i\|_{\txt[\imm^i]}\|Q_{\sm} \vm^i)\|_{L^p_{t,x}[\imm^i]}\\
&\lesssim&\lambda^{-2s-\frac{1}{2}+\frac{1}{p}}\mu^{3s+\frac{1}{2}+\frac{4s+3}{p}}\sup_{\imm}\|
\um\|_{S[\imm]}\|\vm\|_{X^s}. \een We also need to bound the $L^p$
component, here we exchange the interval cutoff with modulation
factor and have $l^p$ summation. \ben
&&\|\sum_{i}\lambda Q_{\approx \sigma}(\um^i Q_{\sigma} \vm^i)\|_{\DLP[\ill]}\\
&\lesssim&
\sigma^{-1}\sup_i\| \um^i\|_{\fxt}\|Q_{\sigma}\vm^i\|_{L^p_{x,t}[\imm^i]}\\
&\lesssim&\sigma^{-1}\mu^{-2s-1}\mu
\sup_i\|\um^i\|_{\B[\imm^i]}\|\vm^i\|_{\LP[\imm^i]}\\
&\lesssim& \mu^{-2s-3 }\sup_{\imm}\|\um\|_{S[\imm]}\|\vm\|_{X^s}.
\een In both case, we can sum up frequency when $-1\leq
s\leq-\frac{3}{4}$.

\noindent{\textbf{Case 2.2.3:}} $Z\times Z$. When $\um,\vm$ both
have high modulation, we put them in $L^3$ (\ref{l3z}).

We begin with the $L^2$ estimate \ben\|\sum_{i}\lambda Q_{\sigma}(
\um^i \vm^i)\|_{\txt[\ill]}
&\lesssim&\lambda(\lambda\sigma)^{\frac{1}{6}}\|\sum_{i}  Q_{\sigma}(\um^i \vm^i)\|_{L^{\frac{3}{2}}_{t,x}[\ill]}\\
&\lesssim&\lambda(\lambda\sigma)^{\frac{1}{6}}|\frac{\ill}{\imm}|^{\frac{2}{3}}\sup_i\|\um^i\|_{L^3_{t,x}[\imm^i]}\|\um^i\|_{L^3_{t,x}[\imm^i]}\\
&\lesssim&\sigma^{\frac{1}{6}}\lambda^{\frac{19}{6}+\frac{8s}{3}}\mu^{-\frac{16}{3}(s+1)
} \sup_{\imm}\|\um\|_{Z[\imm]}\|\vm\|_{X^s}. \een Here notice we
used $l^{\frac{3}{2}}$ summation of the intervals.

From Lemma~\ref{combinecases}, we get \ben \|\sum_{i}\lambda
\um^i\vm^i\|_{\yl[\ill]}
&\lesssim&\lambda^{-\frac{1}{2}+\frac{17(s+1)}{3}}\mu^{-\frac{16}{3}(s+1)
} \sup_{\imm}\|\um\|_{Z[\imm]}\|\vm\|_{X^s}. \een To see we can
sum up frequency, notice exponent for $\mu$ is negative and all
the exponents add up to $-\frac{1}{2}+\frac{1}{3}(s+1) <0$.
\subsection{Estimate for $X^1\times S$.} Suppose $u\in X^1$, $v\in S$. This includes the most dedicate case, i.e. low frequency high modulation piece interact with high frequency
low modulation, where we can not prove the bilinear estimate
(\ref{primaryestimate}). Instead we have to reiterate the equation
and turn the bilinear estimate to trilinear. Let us work on
high-high frequency interaction first.

\noindent{\textbf{Case 3.1:}} High-high frequency interaction
giving out equal or lower frequency, $\lambda\lesssim\mu$. Same as
before, we need to cut into smaller intervals if $\lambda\ll\mu$,
and this procedure degenerate if $\lambda\approx \mu$.

\noindent{\textbf{Case 3.1.1:}} $X^1\times \B$, by (\ref{HiMode})
we must have modulation $\sigma\gtrsim\lambda\mu^2$ in some term.
\[\lambda\el \um\vm=\sum_{i}\sum_{Q_j\in \{Q_{\gtrsim \lambda\mu^2}, Q_{\ll \lambda\mu^2}\}}\lambda Q_1[(Q_2 \um^i)(Q_3 \vm^i)].\]
\noindent{\textbf{Case 3.1.1(a):}} When high modulation is on
output, i.e. $Q_1=Q_{\sigma\gtrsim\lambda\mu^2}$. Using
$L^\infty_tL^2_x$ on $\um^i$, $\txt$ on $\vm^i$, together with
Bernstein on the product, we get, \ben \|\lambda
\sum_{i}Q_{\sigma}[(Q_2 \um^i)(Q_3 \vm^i)]\|_{\txt[\ill]}
&\lesssim&\lambda^{\frac{3}{2}}|\frac{\ill}{\imm}|^{\frac{1}{2}}\sup_i\| \um^i\|_{L^\infty_tL^2_x[\imm^i]}\|\vm^i\|_{\txt[\imm^i]}\\
&\lesssim&\lambda^{2s+3}\mu^{-4s-\frac{7}{2}}\sup_{\imm}\|\um\|_{X^1[\imm]}\|\vm\|_{X^s}.
\een Using the fact that output has high modulation and
Lemma~\ref{combinecases} with $k=\frac{1}{2}$, we get \ben
\|\lambda
\sum_{i}Q_{\sigma\gtrsim\lambda\mu^2}(\um^i\vm^i)\|_{DZ[\ill]}
&\lesssim&\lambda^{4s+4}\mu^{-4s-\frac{9}{2}}\sup_{\imm}\|\um\|_{X^1[\imm]}\|\vm\|_{X^s}.
\een
%
%
\noindent{\textbf{Case 3.1.1(b):}} High modulation on $\um$,
$Q_2=Q_{\gtrsim\lambda\mu^2}$. We put them both in $\txt$. \ben
&&\|\sum_{i}\lambda Q_{\sigma}[(Q_{\gtrsim\lambda\mu^2}\um^i)Q_3 \vm^i]\|_{\txt[\ill]}\\
&\lesssim&\lambda(\lambda\sigma)^{\frac{1}{2}}\|\sum_{i}  Q_{\sigma}[(Q_{\gtrsim\lambda\mu^2}\um^i)Q_3 \vm^i]\|_{L^1_{t,x}[\ill]}\\
&\lesssim&\lambda (\lambda\sigma)^{\frac{1}{2}}|\frac{\ill}{\imm}|\sup_i\|Q_{\gtrsim\lambda\mu^2} \um^i\|_{\txt[\imm^i]}\|Q_3  \vm^i\|_{\txt[\imm^i]}\\
&\lesssim&\sigma^{\frac{1}{2}}\lambda^{4s+\frac{7}{2}}\mu^{-8s-\frac{17}{2}}\sup_{\imm}\|\um\|_{X^1[\imm]}\|\vm\|_{X^s}.
\een hence we have \ben \|\sum_{i}\lambda
(Q_{\gtrsim\lambda\mu^2}\um^i)Q_3
\vm^i\|_{\yl[\ill]}&\lesssim&\lambda^{7s+\frac{13}{2}}\mu^{-8s-\frac{17}{2}}\sup_{\imm}\|\um\|_{X^1[\imm]}\|\vm\|_{X^s}.
\een

\noindent{\textbf{Case 3.1.1(c):}} High modulation comes from
input $Q_3=Q_{\gtrsim\lambda\mu^2}$. We use local smoothing
(\ref{localsmoothing}) on $\um$, and $\txt$ on $\vm$. \ben
&&\|\sum_{i}\lambda Q_{\sigma}[Q_{2} \um^i (Q_{\gtrsim\lambda\mu^2} \vm^i)]\|_{\txt[\ill]}\\
&\lesssim&\lambda \sigma^{\frac{1}{2}}\|\sum_{i} Q_{\sigma}[Q_{2} \um^i (Q_{\gtrsim\lambda\mu^2} \vm^i)]\|_{L^2_xL^1_t[\ill]}\\
&\lesssim& \lambda
\sigma^{\frac{1}{2}}|\frac{\ill}{\imm}|\sup_i\|Q_{2}
\um^i\|_{L^{\infty}_xL^2_t[\imm^i]}
\|Q_{\gtrsim\lambda\mu^2}  \vm^i\|_{\txt[\imm^i]}\\
&\lesssim&\sigma^{\frac{1}{2}}\lambda^{3s+3}\mu^{-6s-6}\sup_{\imm}\|\um\|_{X^1[\imm]}\|\vm\|_{X^s}.
\een Hence we have \ben \|\sum_{i}\lambda  Q_{2} \um^i
(Q_{\gtrsim\lambda\mu^2} \vm^i)\|_{\yl[\ill]}&\lesssim&
\lambda^{6s+6}\mu^{-6s-6}\|\um\|_{X^1[\imm]}\|\vm\|_{S[\imm]}.
\een

\noindent{\textbf{Case 3.1.2:}} $X^1\times Z$. This forces high
modulation $\sm\gtrsim\mu^3$ also on the output.
\[\lambda\el \um\vm=\sum_{i}\lambda Q_{\approx \sm}[ \um^i (Q_{\sm}\vm^i)]\]
We still bound the output in $L^2$ by using  $L^6$ on $\um$, $L^3$
(\ref{l3z}) on $\vm$. \ben
&& \| \sum_{i}\lambda Q_{\sigma}[ \um^i (Q_{\sm}\vm^i)]\|_{\txt[\ill]}\\
&\lesssim&\lambda |\frac{\ill}{\imm}|^{\frac{1}{2}}\sup_i\| \um^i\|_{L^6_{t,x}[\imm^i]}\|Q_{\sm} \vm^i\|_{L^3_{t,x}[\imm^i]}\\
&\lesssim&\lambda^{2s+\frac{5}{2}}
\mu^{-2s-\frac{3}{2}}\mu^{-\frac{1}{6}-s}\mu^{
\threez}\sup_i\|\um^i\|_{X^1[\imm^i]}\|\vm^i\|_{S[\imm^i]}\\
&\lesssim&\lambda^{2s+\frac{5}{2}}\mu^{-5s-4+\frac{2}{3}(s+1)
}\sup_{\imm}\|\um\|_{X^1[\imm]}\|\vm\|_{X^s}. \een From
Lemma~\ref{combinecases} with $k=\frac{1}{2}$, we get \ben\|
\sum_{i}\lambda Q_{\sm}[ \um^i
(Q_{\sm}\vm^i)]\|_{DZ[\ill]}\lesssim
\lambda^{4s+4}\mu^{-5s-\frac{11}{2}+\frac{2}{3}(s+1)}\sup_{\imm}\|\um\|_{X^1[\imm]}\|\vm\|_{X^s}.
\een \noindent{\textbf{Case 3.2:}} High low frequency interaction.
$\ua\in X^1, \vl\in S$, $\lambda\gg\alpha$. The bilinear estimate
(\ref{L2SSX}) is not good enough, so we have to break into more
cases.

\noindent{\textbf{Case 3.2.1:}} $\ua\in X^1, \vl\in \B$. Because
of high modulation relation (\ref{HiMode}), we have
\[\lambda \el \ua\vl=\sum_{Q_i\in
\{Q_{\gtrsim \lambda^2\alpha}, Q_{\ll
\lambda^2\alpha}\}}\lambda\el Q_1[(Q_2\ua)(Q_3 \vl)].\]

\noindent{\textbf{Case 3.2.1(a):}} High modulation  on $\ua$.
$Q_2=Q_{\sigma\gtrsim\lambda^2\alpha}$. Put $\ua$ in $L^2$, $\vl$
in $L^\infty$ (\ref{LinftyB}). \ben \|\lambda\el
Q_1[(Q_{\sigma\gtrsim\lambda^2\alpha}\ua)(Q_3\vl)]\|_{\txt[\ill]}\lesssim
\lambda^{-4s-\frac{7}{2}}\alpha^{-1-s}\|\ua\|_{X^1[\iaa]}\|\vl\|_{\B[\ill]},
\een so from Lemma \ref{combinecases}, we get \ben \|\lambda\el
[(Q_{\sigma\gtrsim\lambda^2\alpha}\ua)(Q_3
\vl)]\|_{\yl[\ill]}\lesssim
\lambda^{-s-2}\alpha^{-1-s}\|\ua\|_{X^1[\iaa]}\|\vl\|_{\B[\ill]}.
\een \noindent{\textbf{Case 3.2.1(b):}} High modulation   on
$\vl$. $Q_3=Q_{\sigma\gtrsim\lambda^2\alpha}$. Put $\ua$ in
$L^\infty$, $\vl$ in $L^2$. \ben \|\lambda\el
Q_1[(Q_2\ua)(Q_{\sigma\gtrsim\lambda^2\alpha}\vl)]\|_{\txt[\ill]}\lesssim
\lambda^{-1-s}\alpha^{-\frac{1}{2}-2s}\|\ua\|_{X^1[\iaa]}\|\vl\|_{\B[\ill]},
\een so we get \ben \|\lambda\el [(Q_2
\ua)(Q_{\sigma\gtrsim\lambda^2\alpha}\vl)]\|_{\yl[\ill]}\lesssim
\lambda^{2s+\frac{1}{2}}\alpha^{-\frac{1}{2}-2s}\|\ua\|_{X^1[\iaa]}\|\vl\|_{\B[\ill]}.
\een \noindent{\textbf{Case 3.2.1(c):}} When none of $\ua,\vl$
have high modulation, this forces the output to be approximately
$\lambda^2\alpha$. $Q_1=Q_{\sigma\approx\lambda^2\alpha}$, put
$\ua$ in $L^\infty$, $\vl$ in $L^2$.

When $\lambda^2\alpha\lesssim\lambda^{4+\frac{3}{2s}}$, i.e.
$\alpha\lesssim\lambda^{2+\frac{3}{2s}}$ we have \ben
&&\|\lambda\el Q_{\sigma\approx\lambda^2\alpha}[(Q_2 \ua)(Q_{3} \vl)] \|_{\DI}\\
&\lesssim&\lambda^{1+s}|\ill|^{\frac{1}{2}}
\alpha^{\frac{1}{2}-s}\lambda^{-2-s}\|\ua\|_{X^1[\iaa]}\|\vl\|_{\B[\ill]}
\\
&\lesssim&\alpha^{\frac{1}{2}-s}\lambda^{2s+\frac{1}{2}}\|\ua\|_{X^1[\iaa]}\|\vl\|_{\B[\ill]},
\een notice we have
$\alpha^{\frac{1}{2}-s}\lambda^{2s+\frac{1}{2}}\lesssim\lambda^{\frac{3}{4s}}$,
which is good for summation.

When $\lambda^2\alpha\gtrsim\lambda^{4+\frac{3}{2s}}$,  we have
\ben
&&\|\lambda\el Q_{\sigma\approx\lambda^2\alpha}[(Q_2\ua)(Q_{3}\vl)] \|_{\DB}\\
&\lesssim&\lambda^{1-s}\sigma^s
\alpha^{\frac{1}{2}-s}\lambda^{-2-s}\|\ua\|_{X^1[\iaa]}\|\vl\|_{\B[\ill]}
\\
&\lesssim&\alpha^{\frac{1}{2}}\lambda^{-1}\|\ua\|_{X^1[\iaa]}\|\vl\|_{\B[\ill]}.
\een

\noindent{\textbf{Case 3.2.2:}} $\ua\in X^1, \vl\in Z$. Here the
bilinear estimate (\ref{L2SX}) is good enough. \ben
\|\lambda\el\ua\vl\|_{\DI[\ill]}&\lesssim&\lambda^{1+s}|\ill|^{\frac{1}{2}}\|\el\ua\vl\|_{\txt[\ill]}
\\
&\lesssim&
\lambda^{1+s}|\ill|^{\frac{1}{2}}\lambda^{-\frac{1}{3}-2s-2}\alpha^{-\frac{1}{6}-s}\|\ul\|_{\zll}\|\va\|_{\xaa}\\
&\lesssim&\lambda^{\frac{1}{6}+s}\alpha^{-\frac{1}{6}-s}\|\ul\|_{\zll}\|\va\|_{\xaa}.
\een

\noindent{\textbf{Case 3.3:}} High low frequency interaction.
$\ul\in X^1, \va\in S$, $\lambda\gg\alpha$.

\noindent{\textbf{Case 3.3.1:}} $\ul\in X^1$, $\va\in \B$. Without
going into modulation analysis, we use $L^\infty_xL^2_t$ on $\ul$,
and
 $L^2_xL^\infty_t$ on $\va$, together with Bernstein and notice the modulation on $\va$ is small.
\ben
\|\lambda\el\ul\va\|_{\DI}&\lesssim& \lambda^{1+s}|\ill|^{\frac{1}{2}}\|\vl\|_{L^\infty_xL^2_t[\ill]}\|\va\|_{L^2_xL^\infty_t[\iaa]}\\
&\lesssim& \lambda^{1+s}|\ill|^{\frac{1}{2}}\lambda^{-1-s}\alpha^{-\frac{3}{2}-2s}\|\ul\|_{X^1[\ill]}\|\va\|_{\B[\iaa]}\\
&\lesssim&\lambda^{2s+\frac{3}{2}}\alpha^{-\frac{3}{2}-2s}\|\ul\|_{X^1[\ill]}\|\va\|_{\B[\iaa]}.
\een
%
%
%
\noindent{\textbf{Case 3.3.2:}} $\ul\in X^1$, $\va\in Z$. Here we
can not prove any bilinear estimate if high modulation fall on
$\va$, so we need the following lemma to reiterate the equation.

\begin{lemma}(Reiterate the equation)\label{backflow} Let $w$ be a solution to $KdV$
equation \textnormal{(\ref{kdv})}. Then we can write its high
modulation part as
\[Q_{\sigma\gtrsim\alpha^3}w_{\alpha}=M_1+ M_2+R,\]
where $M_1, M_2, R$ are as follows:
\begin{itemize}
\item$M_1$ is the output  of two higher frequency-low modulation
interaction,
\[M_1=\sum_{\alpha\lesssim\beta_1\approx\beta_2}\LL^{-1}\alpha
P_\alpha Q_\sigma({w}_{\beta_1}w_{\beta_2}),  \mbox{~~}
w_{\beta_1}, w_{\beta_2}\in X^1\] where $w_{\beta_1}, w_{\beta_2}$
all have very low modulation
$|\tau-\xi^3|\lesssim|\xi|^{4+\frac{3}{2s}}$. \item
 $M_2$ is the output of the high
frequency-low modulation piece interact with low frequency-high
modulation piece.
\[M_2=\sum_{\sigma\gtrsim\alpha^3,\gamma\ll\beta\approx\alpha}\LL^{-1}\alpha P_\alpha Q_\sigma(w_{\beta}w_{\gamma}),  \mbox{~~}  w_{\beta}\in X^1, w_{\gamma}\in Z.\]
$w_\beta$ has   modulation
$|\tau-\xi^3|\lesssim|\xi|^{4+\frac{3}{2s}}$, $w_\gamma$ has high
modulation $|\tau-\xi^3|\gtrsim|\xi|^3$. \item $R$ is the
remainder, which comes from interaction of all other cases
\[R=\sum_{\sigma\gtrsim\alpha^3, \beta,\gamma}\LL^{-1}\alpha
P_\alpha Q_\sigma(w_{\beta}w_{\gamma}).\] For $R$, we have the
estimate
\begin{equation}\|\eta_{\alpha}(t)R_\alpha\|_{\alpha^{-2s-\frac{3}{2}}L^2_xL^{\infty}_t}\lesssim
\|w\|^2_{X^s\cap X^s_{le}[\iaa]}.\label{Rest}\end{equation}
\end{itemize}
The decomposition above is true modulo $\pm$ sign on each term.
\end{lemma}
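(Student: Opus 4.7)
The plan has three stages: invert the linear operator on $Q_{\sigma\gtrsim\alpha^3}w_\alpha$ via the KdV equation, partition the resulting bilinear expression by frequency--modulation type of the two inputs, and then bound the residual term $R$ in the claimed maximal-function space.

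First, since $w$ satisfies $\LL w=-\partial_x(w^2)$, and the Fourier multipliers $P_\alpha$ and $Q_{\sigma\gtrsim\alpha^3}$ commute with $\LL$, I get
\[ \LL\bigl(Q_{\sigma\gtrsim\alpha^3}w_\alpha\bigr)=-P_\alpha Q_{\sigma\gtrsim\alpha^3}\partial_x(w^2). \]
On this support $|\tau-\xi^3|\gtrsim\alpha^3$, so $\LL$ is invertible (no homogeneous piece appears) with symbol bounded by $\sigma^{-1}\lesssim\alpha^{-3}$, giving
\[ Q_{\sigma\gtrsim\alpha^3}w_\alpha=-\LL^{-1}\alpha P_\alpha Q_{\sigma\gtrsim\alpha^3}(w^2), \]
after absorbing $\partial_x$ into a bounded multiplier of size $\alpha$. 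I then Littlewood--Paley expand $w^2=\sum_{\beta,\gamma}w_\beta w_\gamma$ and further split each $w_\mu$ by modulation into its $X^1$-piece (modulation $\lesssim\mu^{4+\frac{3}{2s}}$), its $\B$-piece, and its $Z$-piece, as dictated by Remark~\ref{functionspaceremark}(\ref{modulationmatch}).

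The frequency constraint combined with the resonance identity $\tau-\xi^3=(\tau_1-\xi_1^3)+(\tau_2-\xi_2^3)-3\xi\xi_1\xi_2$ confines the relevant pairs $(\beta,\gamma)$ to two regimes: high-high $\beta\approx\gamma\gtrsim\alpha$, and high-low $\beta\approx\alpha\gg\gamma$. In the high-high regime with both inputs in $X^1$ (low modulation), the interaction is genuinely resonant and not amenable to bilinear control, so I retain it as $M_1$. In the high-low regime with $w_\beta\in X^1$, the resonance identity yields $|3\xi\xi_1\xi_2|\approx\alpha^2\gamma\ll\alpha^3$, so the output modulation $\gtrsim\alpha^3$ must come from an input; since $w_\beta$ has modulation $\lesssim\alpha^{4+\frac{3}{2s}}\ll\alpha^3$, the high modulation is forced onto $w_\gamma$, which is then in $Z$. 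I retain this as $M_2$. All remaining configurations go into $R$.

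To bound $R$ in $\alpha^{-2s-\frac{3}{2}}L^2_xL^\infty_t$, I use that a function supported at modulation scale $\sigma$ satisfies $\|f\|_{L^2_xL^\infty_t}\lesssim\sigma^{1/2}\|f\|_{L^2_{x,t}}$ by Bernstein in time. Combined with the $\sigma^{-1}$ gain from $\LL^{-1}$ and the $\alpha$ from $\partial_x$, the leading multiplier becomes $\alpha\sigma^{-1/2}\lesssim\alpha^{-1/2}$ when $\sigma\gtrsim\alpha^3$. Matching the target norm then reduces to showing $\|P_\alpha Q_\sigma(w_\beta w_\gamma)\|_{L^2_{x,t}}\lesssim\alpha^{-2s-1}\|w\|_{\xx}^2$ for every $(\beta,\gamma)$ configuration in $R$, which I will verify case by case using the bilinear $L^2_{x,t}$ estimates of Proposition~\ref{biliinearprop} and Remark~\ref{remarkonSS} ($X^1\times\B$, $X^1\times Z$, $\B\times\B$, $\B\times Z$, $Z\times Z$ in the high-high regime, and the corresponding high-low estimates), occasionally routing the forced extra modulation onto the appropriate input via the resonance identity. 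The main obstacle is not any single case but the combinatorial bookkeeping: avoiding overlap with $M_1$ and $M_2$, handling the transitional scales $\sigma\approx\alpha^3$ and $\beta\approx\alpha$ where modulation or frequency bands meet, and summing the geometric tails in $\beta,\gamma,\sigma$, where the endpoint $s=-\tfrac{4}{5}$ becomes tight.
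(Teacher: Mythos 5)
Your proposal follows the paper's proof essentially verbatim: invert $\LL$ on the high-modulation output, Littlewood--Paley and modulation-decompose the quadratic term, isolate exactly the same two non-estimable configurations as $M_1$ and $M_2$, and reduce the bound on $R$ to $L^2_{x,t}$ bilinear estimates via the net gain $\alpha\sigma^{-1/2}$ coming from $\LL^{-1}$ together with Bernstein in time. The one caution is that the deferred case-by-case verification is where nearly all of the paper's work lies, and in the high-high regime the paper cannot always afford an $\ell^1$ summation over the $\beta^{4s+3}$-length subintervals of $I_\alpha$, nor a bare dyadic sum over $\sigma$: it resorts to the local energy norm $\|w\|_{X^s_{le}}$ (which is precisely why that norm appears on the right of (\ref{Rest})) and to the boundedness of $\LL^{-1}:L^2_xL^1_t\to L^2_xL^\infty_t$, neither of which is visible in your uniform reduction to $\|P_\alpha Q_\sigma(w_\beta w_\gamma)\|_{L^2_{x,t}}\lesssim\alpha^{-2s-1}\|w\|^2_{X^s\cap X^s_{le}}$.
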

\begin{proof}: If we apply frequency and modulation projection on the equation, we get
\[(\partial_t+\partial^3_x)P_\alpha Q_{\sigma}w=-P_\alpha
Q_{\sigma}\partial_x(w^2).\] Hence modulo $\pm$ sign we have
\[P_\alpha Q_{\sigma}w=\LL^{-1}\alpha P_\alpha Q_{\sigma}(w^2).\]
Here we decompose $w$ into dyadic pieces, $P_\alpha
Q_{\sigma}w=\LL^{-1}\alpha P_\alpha
Q_{\sigma}(w_{\beta}w_{\gamma})$. Now we first break  each
$w_\lambda$  into sum of functions supported on time scale
$|\lambda|^{4s+3}$. Next, for each $w_\lambda\in \xx[\ill]$, let
us  decompose it as  $w_\lambda=w_{\lambda,1}+w_{\lambda,2}$,
$w_{\lambda,1}\in X^1$,  $w_{\lambda,2}\in S$. Then we can just
take $\ub, \vr$ to represent $w_{\beta,i},\hspace{0.05in}
w_{\gamma,j}$, \hspace{0.05in}$i,j\in\{1,2\}$.

We will prove  that except for the two cases in $M_1$ and $M_2$,
we have the estimate (\ref{Rest}).
We list the estimates of all cases below, which are similar to what we have done before. Notice the modulation is  always larger than $\alpha^3$ in the summation.  \\
\noindent\textbf{\textbf{Case 1}}: $\beta\approx\alpha\gg\gamma$.

\noindent\textbf{(1)} $\ua,\vr\in X^1$, use Bernstein and bilinear
estimate (\ref{L2XX}) \ben
\|\ea\sum_\sigma\LL^{-1}\alpha\pq(\ua\vr)\|_{\rs}&\lesssim&\frac{\alpha^{2s+\frac{5}{2}}}
{\sigma^\frac{1}{2}}\alpha^{-1-s}\gamma^{-s}\|\ua\|_{X^1[\iaa]}\|\vr\|_{X^1[I_\gamma]}\\
&\lesssim&\alpha^{-1+s}
\gamma^{-s}\|\ua\|_{X^1[\iaa]}\|\vr\|_{X^1[I_\gamma]}. \een
\textbf{(2) }$\ua\in X^1, \vr\in S$  we only deal with $\vr\in
\B$. And leave $\vr\in Z$ term into $M_2$. Notice here $\ua$ must
have high modulation $\sigma$ (\ref{HiMode}). Put $L^2$ on $\ua$,
$L^\infty$ on $\vr$ \ben
&&\|\ea\sum_\sigma\LL^{-1}\alpha\pq((Q_\sigma\ua)\vr)\|_{\rs}
\\
&\lesssim&\alpha^{2s+\frac{5}{2}}\sigma^{-\frac{1}{2}}
\sigma^{-1}|\iaa|^{-\frac{1}{2}}\alpha^{-s}\gamma^{-2s-1}\|\ua\|_{X^1[\iaa]}\|\vr\|_{\B[I_\gamma]}\\
&\lesssim&\alpha^{-s-\frac{7}{2}}
\gamma^{-2s-1}\|\ua\|_{X^1[\iaa]}\|\vr\|_{\B[I_\gamma]}. \een
\textbf{(3)} $\ua\in S, \vr\in X^1$,  use the bilinear estimate
(\ref{L2SSX}) \ben
&&\|\ea\sum_\sigma\LL^{-1}\alpha\pq(\ua\vr)\|_{\rs}\\
&\lesssim&
\alpha^{2s+\frac{5}{2}}\sigma^{-\frac{1}{2}}\max{\{\alpha^{-\frac{1}{3}-2s-2}\gamma^{-\frac{1}{6}-s},\alpha^{-2-s}\gamma^{\frac{1}{2}-s}\}}
\|\ua\|_{S[\iaa]}\|\vr\|_{X^1[I_\gamma]}\\
&\lesssim&\max{\{\alpha^{-\frac{4}{3}}\gamma^{-\frac{1}{6}-s},\alpha^{s-1}
\gamma^{\frac{1}{2}-s}\}}\|\ua\|_{S[\iaa]}\|\vr\|_{X^1[I_\gamma]}.
\een
\textbf{(4)} $\ua,\vr\in S$, we consider several cases:
\\
If $\ua, \vr\in\B$, or $\ua\in Z, \vr\in\B$, or $\ua,\vr\in Z$,
then we have the bilinear estimate
 (\ref{L2SSS}). So we have
\ben
&&\|\ea\sum_\sigma \LL^{-1}\alpha\pq((Q_\sigma\ua)\vr)\|_{\rs}\\
&\lesssim&\frac{\alpha^{2s+\frac{5}{2}}}
{\sigma^{\frac{1}{2}}}\alpha^{\threez}\gamma^{\sixz}\|\ua\|_{\B[\iaa]}\|\vr\|_{\B[I_\gamma]}\\
&\lesssim&\alpha^{-\frac{4}{3}+\tail}
\gamma^{\sixz}\|\ua\|_{\B[\iaa]}\|\vr\|_{\B[I_\gamma]}. \een
Notice the exponents add up to $-1.$

If $\ua\in \B, \vr\in Z$, use $L^2$ on $\ua$, $L^p$ on $\vr$ \ben
&&\|\ea\sum_\sigma\LL^{-1}\alpha\pq(\ua\vr)\|_{\rs}\\
&\lesssim& \alpha^{2s+\frac{5}{2}} \sigma^{-1} \sigma^{\frac{1}{2} }\alpha^{-2-s}\gamma^{1 }\|\ua\|_{\B[\iaa]}\|\vr\|_{Z[I_\gamma]}\\
&\lesssim&\alpha^{s-1
}\gamma\|\ua\|_{\B[\iaa]}\|\vr\|_{\B[I_\gamma]}. \een The
exponents add up to $s<0$.

\noindent{\textbf{Case 2}}: $\beta\approx \gamma\gtrsim \alpha$.
This part is every similar to the estimates in Case 1.2, 2.2  and
3.1. We still need to decompose $\ub$ into sums of functions that
are supported on the $\mu^{4s+3}$ time scale. $\ub=\sum_i \ub^i,
\hspace{0.1in}\ub^i\in X_\beta[\ibb^i]$

\noindent\textbf{(1)} $\ub,\vb\in X^1$, when one of input e.g.
$\ub$ has high modulation $Q_{\gtrsim \alpha\beta^2}$, estimate
$\ub$ in $L^2$, and $\vb$ in $L^\infty_xL^2_t$. Here because we
want to use Bernstein, but also want to have better summation of
time intervals. So we need to use local energy space $X^s_{le}$
similarly as in case 1.2(b).
\begin{eqnarray*}&&\sum_j\|\chi_j^\alpha(x)\ea\sum_{\sigma}\LL^{-1}\alpha\pq ((\Qm
\ub)\vb)\|^2_{\rs}\\
&\lesssim&\sum_j \alpha^{4s+5}\|\chi_j^\alpha(x)\sum_{\sigma}\pq
((\Qm
\ub)\vb)\|^2_{L^2_xL^1_t[\iaa]}\\
&\lesssim& \sum_j\alpha^{4s+5}\|\chi_j^\alpha(x)Q_{\sm}\ub\|^2_{\txt[\iaa]}\|\chi_j^\alpha(x)\vb\|^2_{L^{\infty}_xL^2_t[\iaa]}\\
&\lesssim&\alpha^{4s+5}\sum_{i,j}\|\chi_j^\alpha(x)Q_{\sm}\ub^i\|^2_{\txt[\ibb^i]}\sup_j\sum_i \|\chi_j^\alpha(x)\vb^i\|^2_{L^{\infty}_xL^2_t[\ibb^i]}\\
&\lesssim&\alpha^{4s+5}|\frac{\iaa}{\ibb}|\sup_i\|Q_{\sm}\ub^i\|_{\txt[\ibb^i]}\sup_j\sum_i \|\chi_j^\alpha(x)\vb^i\|^2_{L^{\infty}_xL^2_t[\ibb^i]}\\
&\lesssim&\alpha^{8s+6}\beta^{-12s-12}\|\ub\|^2_{X_\beta[\ibb]}\|\vb\|^2_{X^s_{le}
}.
\end{eqnarray*}
\begin{remark} In   these estimates, we need to sum up all the modulations larger than $\alpha^3$. It is fine as long as there is a negative factor of $\sigma$
through the estimate. But in the one above, we need be more
careful. Split the problem into $\sigma\approx\alpha^3$, and
$\sigma\gg\alpha$.

When $\sigma\approx \alpha^3$, we can sum up modulation easily.

When $ \sigma\gg\alpha^3$, we prove $\LL^{-1} :
L^2_xL^1_t\rightarrow L^2_xL^\infty_t$ is bounded operator£¬ which
is done by looking at the symbol
$\frac{1}{\tau-\xi^3}=\frac{1}{\tau}+\frac{\xi^3}{\tau(\tau-\xi^3)}\approx
\frac{1}{\tau}$. And
 $\partial_t^{-1}: L^2_xL^1_t\rightarrow L^2_xL^\infty_t$  is bounded if it acts on functions which vanish at $\infty$.
\end{remark}
\noindent\textbf{(2)} $\ub\in X^1,\vb\in S$, we also split it into
two cases:

\noindent (a) When $\ub\in X^1,\vb\in \B$. Now if the output
modulation $\sigma\gtrsim\alpha\beta^2$, use $L^\infty$ on $\ub$,
and $L^2$ on $\vb$,
\ben&&\|\ea\sum_{\sigma\gtrsim\alpha\beta^2}\LL^{-1}\alpha
 \sum_{i}\pq
(\ub^i\vb^i)\|_{\rs}\\&\lesssim&
\frac{\alpha^{2s+\frac{5}{2}}}{\sigma^{\frac{1}{2}}}
|\frac{\iaa}{\ibb}|^{\frac{1}{2}}\beta^{\frac{1}{2}-s}\beta^{-2-s}\sup_i\|\ub^i\|_{\xbb^i}\|\vb^i\|_{\B[\ibb^i]}\\
&\lesssim&\alpha^{4s+\frac{7}{2}}\beta^{-4s-4}\|\ub\|_{X^s}\|\vb\|_{X^s}
\een And if $\sigma\ll\alpha\beta^2$, we use $L^\infty_xL^2_t$ on
$\ub$, $L^2$ on $\vb$. We still play the trick: using local energy
space to get $l^2$ summation of the intervals. \ben&&\|\ea
\sum_{\sigma{\ll\alpha\beta^2}}\LL^{-1}\alpha\sum_{i}\pq
(\ub^i\vb^i)\|_{\rs}\\
&\lesssim& \alpha^{2s+\frac{5}{2}}
|\frac{\iaa}{\ibb}|^{\frac{1}{2}}\beta^{-1-s}\beta^{-2-s}\sup_i\|\ub^i\|_{X^1[\ibb^i]}\|\vb^i\|_{X^s_{le}}\\
&\lesssim&\alpha^{4s+4}\beta^{-4s-\frac{9}{2}}\|\ub
\|_{X^s}\|\vb\|_{X^s_{le}}. \een The point here is we can sum up
the modulation $\alpha^3\lesssim\sigma\ll\alpha\beta^2$, which
give us at most $\log\beta$ loss. But we are fine because of the
negative power on $\beta$. We will do a similar thing whenever we
want to be careful with modulation summation, hence we will ignore
it.

\noindent(b) When $\ub\in X^1,\vb\in Z$. This force high
modulation $\sm\gtrsim\beta^3$  on $\ub$, or on output.

When $\sm$ is on $\ub$, use $L^2$ on $\ub$, $L^\infty$ on $\vb$.
\ben&&\|\ea \sum_{\sigma }\LL^{-1}\alpha\sum_{i}\pq
(\ub^i\vb^i)\|_{\rs}\\&\lesssim&
\frac{\alpha^{2s+\frac{5}{2}}}{\sigma}\sigma^{\frac{1}{2}}
|\frac{\iaa}{\ibb}|^{\frac{1}{2}
}\sm^{-1}|\ibb|^{-\frac{1}{2}}\beta^{-s}\beta
\sup_i\|\ub^i\|_{X^1[\ibb^i]}\|\vb^i\|_{Z[\ibb^i]}\\
&\lesssim&\alpha^{4s+\frac{5}{2} }\beta^{-5s-5
}\|\ub\|_{X^s}\|\vb\|_{X^s}. \een When $\sm$ is on output, simply
put $L^6$ on $\ub$, and $L^3$ on $\vb$. \ben&&\|\ea\sum_{\sigma
}\LL^{-1}\alpha\sum_{i}\pq (\ub^iQ_{\sm}\vb^i)\|_{\rs}\\&\lesssim&
\frac{\alpha^{2s+\frac{5}{2}}}{\sm}\sm^{\frac{1}{2}}|\frac{\iaa}{\ibb}|^{\frac{1}{2}}\beta^{-\frac{1}{6}-s}\beta^{\threez}
\sup_i\|\ub^i\|_{X^1[\ibb^i]}\|\vb^i\|_{Z[\ibb^i]}\\
&\lesssim&\alpha^{4s+\frac{7}{2}}\beta^{-5s-5-\tail}\|\ub\|_{X^s}\|\vb\|_{X^s}.
\een
\textbf{(3)} $\ub,\vb\in S$. We still break into cases.

\noindent \textbf{(a)} $\ub, \vb\in \B$, use $L^2$ on both,  and
$l^1$ summation of interval is good enough.
\begin{eqnarray*}&&\|\ea\sum_{\sigma }\LL^{-1}\alpha\sum_{i}\pq
(\ub^i\vb^i)\|_{\rs}\\&\lesssim&\alpha^{2s+3}|\frac{\iaa}{\ibb}|\sup_i\| \ub^i\|_{\txt[\ibb^i]}\|\vb^i\|_{\txt[\ibb^i]}\\
&\lesssim&\alpha^{6s+6}\beta^{-6s-7}\|\ub\|_{X^s}\|\vb\|_{X^s}.
\end{eqnarray*}
\noindent \textbf{(b)} $\ub\in \B, \vb\in Z$, $L^2$ on $\ub$,
$L^3$ on $\vb$, with a $l^1$ summation of interval.
\begin{eqnarray*}&&\|\ea\sum_{\sigma }\LL^{-1}\alpha\sum_{i}\pq
(\ub^i\vb^i)\|_{\rs}\\&\lesssim&\alpha^{2s+\frac{5}{2}+\frac{1}{3}}\sigma^{-\frac{1}{6}}|\frac{\iaa}{\ibb}|
\sup_i\| \ub^i\|_{\txt[\ibb^i]}\|\vb^i\|_{L^3_{t,x}[\ibb^i]}\\
&\lesssim&\alpha^{6s+\frac{16}{3}}\beta^{-7s-7-\frac{1}{3}+\tail}\|\ub\|_{X^s}\|\vb\|_{X^s}.
\end{eqnarray*}

\noindent \textbf{(c)} $\ub , \vb\in Z$, Here we are a bit careful
about interval cut off, using the $l^{\frac{3}{2}}$ summation.
\begin{eqnarray*}&&\|\ea\sum_{\sigma }\LL^{-1}\alpha\sum_{i}\pq
(\ub^i\vb^i)\|_{\rs}\\
&\lesssim&\alpha^{2s+\frac{5}{2}+\frac{1}{6}}\sigma^{-\frac{1}{3}}\|\sum_{i}\pq
(\ub^i\vb^i)\|_{L^{\frac{3}{2}}_{t,x}[\iaa]}\\
&\lesssim&\alpha^{2s+\frac{5}{2}+\frac{1}{6}}\sigma^{-\frac{1}{3}}|\frac{\iaa}{\ibb}|^{\frac{2}{3}}
\sup_i\|\ub^i\|_{L^3_{t,x}[\ibb^i]}\|\vb^i\|_{L^3_{t,x}[\ibb^i]}\\
&\lesssim&\alpha^{2s+1+\frac{8s+8}{3}}\beta^{-\frac{16}{3}(s+1) }
\|\ub\|_{X^s}\|\vb\|_{X^s}.
\end{eqnarray*}
\end{proof}
  \noindent Now we use this lemma to finish our
estimate of Case 3.3,  $\ul\in X^1, \va\in Z$.
\[\lambda \ul\va=\lambda \ul (M_{1\alpha}+M_{2\alpha}+R_\alpha).\]
\textbf{Step 1:}  Let us do  $R_\alpha$ first, using the estimate
for $R_\alpha$
 in the lemma.
\begin{eqnarray*}
 \|\lambda \el \ul R_\alpha\|_{\DI}
&\lesssim& \lambda^{3s+\frac{5}{2}}\|\el\ul\|_{\fxtt}\|\el
R_{\alpha}\|_{L^2_xL^{\infty}_t}\\
&\lesssim&\lambda^{3s+\frac{5}{2}}\lambda^{-1-s}\alpha^{-2s-\frac{3}{2}}\|\el\ul\|_{X^1}\|\el
R_{\alpha}\|_{\rs}\\
&\lesssim&(\frac{\alpha}{\lambda})^{-2s-\frac{3}{2}}\|u_{\lambda}\|_{X^s[\ill]}\|v\|^2_{X^s\cap
X^s_{le}}.
\end{eqnarray*}
\textbf{Step 2:} Feed $M_1$ into the bilinear term, we divide it
into two terms. \ben
\lambda \ul \hspace{-0.2in}\sum_{\sigma \approx \alpha\beta^2,
\alpha\lesssim\beta\ll\lambda}\hspace{-0.2in}\LL^{-1}\alpha
{\pq}_{\approx\alpha\beta^2}(\vb \vb) +\lambda \ul
\hspace{-0.2in}\sum_{\sigma \approx \alpha\beta^2,
\alpha\lesssim\lambda\lesssim\beta}\hspace{-0.2in}\LL^{-1}\alpha
 {\pq}_{\approx\alpha\beta^2}(\vb \vb) \een
The first term, we will bilinear estimate for $\ub\vl$,
also here    for fixed $\beta$, $P_{\alpha}Q_{\alpha\beta^2}(\vb
\vb)$ is almost althogonal to each other, so we can sum up
$\alpha$
\begin{eqnarray*}
 &&\|\lambda \el\ul
\sum_{\sigma \approx \alpha\beta^2,
\alpha\lesssim\beta\ll\lambda}\LL^{-1}\alpha
{\pq}_{\approx\alpha\beta^2}(\vb \vb)\|_{\DI}\\
&\lesssim&\lambda^{3s+\frac{5}{2}}\sum_{\alpha\lesssim\beta\ll\lambda}\frac{1}{\beta^2}\|\el
\ul P_{\alpha}Q_{\alpha\beta^2}(\vb
\vb)\|_{\txt}\\
&\lesssim&  \lambda^{3s+\frac{5}{2}}\sum_{\alpha\lesssim\beta\ll\lambda}\frac{1}{\beta^2}\|\el\ul\ub\|_{\txt}\|\el\vb\|_{\fxt}\\
&\lesssim&
\lambda^{3s+\frac{5}{2}}\sum_{\alpha\lesssim\beta\ll\lambda}\frac{1}{\beta^2}\lambda^{-1-s}
\alpha^{\frac{1}{2}}\beta^{-2s}\|\ul\|_{\xll}\|\vb\|_{\xbb}\|\vb\|_{\xbb}\\
&\lesssim&(\frac{\lambda}{\beta})^{2s+\frac{3}{2}}\|\ul\|_{X^s[\ill]}\|\vb\|_{X^s[\ibb]}\|\vb\|_{X^s[\ibb]}.
\end{eqnarray*}
Here we actually used the fact that, when fix $\alpha$, the two
$\vb$'s can be decomposed to functions with $\hat{v}_{\beta}$
supported on size $\alpha$ interval, so we used bernstein to get
 $$\|\vb\|_{\fxt}\lesssim \alpha^{\frac{1}{2}}\|\vb\|_{L^{\infty}_tL^2_x}.$$
So for $s\leq-\frac{3}{4}$, we can sum up $\beta$.

For the second term, we will use at least $l^4$ interval summation
(or better if we use local energy space). The good thing is that
for $\beta$ fixed, then $P_{\alpha}Q_{\alpha\beta^2}(\vb \vb)$ are
almost orthogonal to each other in both space and time, so we can
sum up $\alpha$ and then ignore it. Also because $\ub$ is measured
on the smallest time scale, we still need to cut the interval.
\begin{eqnarray*}
&&\|\lambda\el \ul \sum_{\sigma \approx \alpha\beta^2,
\alpha\lesssim\lambda\lesssim\beta}\LL^{-1}\alpha
 {\pq}_{\approx\alpha\beta^2}(\vb \vb)\|_{\DI}\\
&\lesssim& \lambda^{3s+\frac{5}{2}}\|\el \ul\|_{L^4_t
L^{\infty}_x}\|\el\sum_{\alpha\lesssim\lambda\lesssim\beta}\frac{1}{\beta^2}P_{\alpha}Q_{\alpha\beta^2}(\vb
\vb)\|_{L^4_tL^2_x}\\
&\lesssim&\lambda^{3s+\frac{5}{2}-\frac{1}{4}-s}\|\el \ul\|_{X^1}
\sum_{\lambda\lesssim\beta}\|\el\sum_{\alpha\lesssim\lambda}P_{\alpha}Q_{\alpha\beta^2}(\vb\vb)\|_{L^4_tL^2_x}\\
&\lesssim&\lambda^{2s+\frac{9}{4}}\|\ul\|_{X^s}\sum_{\lambda\lesssim\beta}|\frac{\ill}{\ibb}|^{\frac{1}{4}}
\|\eb(\vb\vb)\|_{L^4_tL^2_x}\\
&\lesssim& \lambda^{2s+\frac{9}{4}}\|\ul\|_{X^s}
\sum_{\lambda\lesssim\beta}|\frac{\ill}{\ibb}|^{\frac{1}{4}}
\|\eb\vb\|_{L^8_tL^4_x}\|\eb\vb\|_{L^8_tL^4_x}\\
&\lesssim&
(\frac{\lambda}{\beta})^{3s+3}\|\ul\|_{X^s}\|\vb\|^2_{X^s}
\end{eqnarray*}

So we combine the two cases together and get $$\|\lambda \ul
\sum_{\alpha\ll\lambda}M_{1\alpha}\|_{\DI}\lesssim\|\ul\|_{X^s}\|v\|^2_{X^s}.$$
\textbf{Step 3:} Now we feed in the term $M_2$,
We want to use local energy norm, so let us cut up the space using
$\chi_j^\alpha(x)$.
%
%
%
%
%
\begin{eqnarray*}
&&\|\lambda \el \chi_j^\alpha(x) \ul \sum_{\gamma\ll\alpha\lesssim\lambda, \sigma\gtrsim\alpha^3}\LL^{-1}\alpha \pq(\va\vr)\|_{\DI}\\
&\lesssim&
\lambda^{3s+\frac{5}{2}}\sum_{\gamma\ll\alpha\lesssim\lambda,
\sigma\gtrsim\alpha^3}\frac{\alpha}{\sigma}\|\el\ul\|_{\fxtt}\|
\el \chi_j^\alpha(x)\pq(\va\vr)\|_{L^2_xL^{\infty}_t}
\\
&\lesssim&\lambda^{3s+\frac{5}{2}}\lambda^{-1-s}\|\ul\|_{X_{}^s}\sum_{\gamma\ll\alpha\lesssim\lambda,
\sigma\gtrsim\alpha^3}\frac{\alpha}{\sigma}\|\el\chi_j^\alpha(x) \pq(\va\vr)\|_{L^2_xL^{\infty}_t}\\
&\lesssim&\sum_{\gamma\ll\alpha\lesssim\lambda,
\sigma\gtrsim\alpha^3}\lambda^{2s+\frac{3}{2}}\|\ul\|_{X_{}^s}
\alpha\sigma^{-1 }\|\el\chi_j^\alpha(x)\va\|_{\txft}\|\el\cax\vr\|_{L^{\infty}_{t,x}}\\
&\lesssim&\sum_{\gamma\ll\alpha\lesssim\lambda,
\sigma\gtrsim\alpha^3}\lambda^{2s+\frac{3}{2}}\alpha^{s+\frac{5}{2}
}
\sigma^{-1 }\gamma \|\ul\|_{X_{}^s}\|\chi_j^\alpha(x)\va\|_{X^1[\iaa]}\|\vr\|_{X_{}^s}\\
&\lesssim&\lambda^{2s+\frac{3}{2}}\alpha^{s+\frac{1}{2}
}\|\ul\|_{X_{}^s}\|\|\chi_j^\alpha(x)\va\|_{X^1[\iaa]}\|\|v\|_{X^s}
\end{eqnarray*}
We can also square sum up the spatial cutoff in the estimate
above, and get \ben \|\lambda \el   \ul M_2\|_{\DI}&\lesssim &
\|\ul\|_{X_{}^s}\|v\|^2_{\xx}. \een In the proof we used the
estimate
\[\|\el\chi_j^\alpha(x)\va\|_{\txft}\lsm \|\chi_j^\alpha(x)\|_{L^4_x}\|\el\chi_j^\alpha(x)\va\|_{L^4_xL^\infty_t}\lesssim\alpha^{s+\frac{3}{2}}\|\ua\|_{\xaa}.\]
Actually we also have $\txft$ maximal function estimate
\cite{KPV3} on small time interval.

We end this section with two bilinear estimates, as a companion to
Proposition~\ref{biliinearprop}. The proof is essentially
repeating what we did preivously.

\begin{prop}\label{L2bilinear}
For $\lambda\gg\alpha$ we have the following estimates
\beq\label{l2xhighs}\|\el\ul
(Q_{\sigma\gtrsim\alpha^3}\va)\|_{\txt}\lesssim
\lambda^{-3s-\frac{5}{2}}\|\ul\|_{\xll}\|v\|^2_{X^s\cap
X^s_{le}},\eeq \beq\label{sumupl2}\|\el\ul\va\|_{\txt}\lesssim
\max{(\lambda^{-1-s}\alpha^{-s},\lambda^{-3s-\frac{5}{2}})}\|\ul\|_{X^s\cap
X^s_{le}}(\|v\|_{X^s\cap X^s_{le}}+\|v\|^2_{X^s\cap
X^s_{le}}),\eeq \beq\label{l2hmodesamefrequency} \|\el
(Q_{\sigma\gtrsim\lambda^3}\ul)\vl\|_{\txt}\lesssim
\lambda^{-3s-\frac{5}{2}}\|\ul\|_{\xx}\|\vl\|_{\xx}.\eeq
\end{prop}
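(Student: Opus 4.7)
My plan is to prove all three estimates by invoking the reiteration Lemma \ref{backflow}. The unifying observation is that whenever a factor is restricted to modulation $\gtrsim |\xi|^3$ (the elliptic region), one can solve the equation for that factor and thereby convert the bilinear expression into a trilinear one already handled in Case 3.3.2.

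For (\ref{l2xhighs}), I apply Lemma~\ref{backflow} to $Q_{\sigma\gtrsim\alpha^3}v_\alpha$, writing it as $M_{1\alpha}+M_{2\alpha}+R_\alpha$, and estimate $\|\el \ul \cdot X\|_{\txt}$ for each of $X\in\{M_{1\alpha},M_{2\alpha},R_\alpha\}$ separately. For $R_\alpha$ I pair the local smoothing bound $\|\el \ul\|_{\fxtt}\lesssim\lambda^{-1-s}\|\ul\|_{\xll}$ from (\ref{localsmoothing}) with the $L^2_xL^\infty_t$ estimate (\ref{Rest}); the resulting bound $\lambda^{-1-s}\alpha^{-2s-3/2}$ is dominated by $\lambda^{-3s-5/2}$ whenever $\alpha\leq\lambda$ and $s\leq -\frac{3}{4}$. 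For $M_{1\alpha}$ I repeat Step 2 of Case 3.3.2: split into $\beta\ll\lambda$ and $\beta\gtrsim\lambda$; in the first regime the bilinear estimate (\ref{L2XX}) applied to $\ul v_\beta$ already gives a product-type bound, in the second I put $\ul$ in $L^4_tL^\infty_x$ via (\ref{StrichartzX}) and use an $l^4$ time-interval sum over the $v_\beta v_\beta$ factors. For $M_{2\alpha}$ I follow Step 3 of Case 3.3.2, localizing with $\chi_j^\alpha(x)$ to gain interval summability via the local energy norm $X^s_{le}$.

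For (\ref{sumupl2}) the argument is just an assembly: split $v_\alpha = Q_{\sigma\ll\alpha^3}v_\alpha + Q_{\sigma\gtrsim\alpha^3}v_\alpha$. The low-modulation piece is essentially an $X^1$ element to which (\ref{L2XX}) applies, yielding the $\lambda^{-1-s}\alpha^{-s}$ bound; the high-modulation piece is absorbed by (\ref{l2xhighs}) just proved, yielding the $\lambda^{-3s-5/2}$ bound. Taking the maximum gives the stated estimate, where the extra $\|v\|^2$ factor is inherited from the trilinear term produced by the reiteration.

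For (\ref{l2hmodesamefrequency}) the strategy is identical to (\ref{l2xhighs}), but applied with $\alpha$ replaced by $\lambda$: I decompose $Q_{\sigma\gtrsim\lambda^3}u_\lambda = M_{1\lambda}+M_{2\lambda}+R_\lambda$ via Lemma~\ref{backflow} and pair each summand with $\el v_\lambda$. The only structural difference from (\ref{l2xhighs}) is that all frequencies involved are now comparable, so the role previously played by the high-low estimate (\ref{L2XX}) is taken over either by the high-high bilinear estimate (\ref{L2XXHHL}), or, in the sub-case where both factors are concentrated near $\tau=\xi^3$ with like-signed frequencies and the product lies near $\tau=\tfrac{1}{4}\xi^3$, by the bound (\ref{goodNC}). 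The main difficulty I anticipate is not a new idea but the combinatorial bookkeeping: matching each sub-case of the reiteration to the correct Strichartz, local smoothing, or bilinear tool, and verifying convergence of the $\beta$ and $\gamma$ summations for $s\geq-\frac{4}{5}$; since this is exactly what was done in Lemma~\ref{backflow} and Case 3.3.2, I expect no new obstruction.
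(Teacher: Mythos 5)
Your treatment of (\ref{l2xhighs}) is essentially the paper's: reiterate via Lemma~\ref{backflow} and observe that Steps 1--3 of Case 3.3.2 are already $L^2$ bounds on the product once the weight $\lambda^{3s+\frac{5}{2}}$ is stripped off; your numerology for the $R_\alpha$ term is also correct. The problems are with the other two estimates. For (\ref{sumupl2}), splitting only $\va$ by modulation does not suffice. First, $Q_{\sigma\ll\alpha^3}\va$ is \emph{not} ``essentially an $X^1$ element'': by Remark~\ref{functionspaceremark}(\ref{modulationmatch}) the $X^1$ and $S$ norms balance only at modulation $\alpha^{4+\frac{3}{2s}}\ll\alpha^3$, so in the intermediate range $\alpha^{4+\frac{3}{2s}}\lesssim\sigma\ll\alpha^3$ the function is genuinely a $\B$ piece, and converting it to $X^1$ loses a positive power of $\alpha$; the paper handles this piece separately by pairing $L^\infty_xL^2_t$ on $\ul$ with $L^2_xL^\infty_t$ on $\va$, yielding $\lambda^{-1-s}\alpha^{-2s-\frac{3}{2}}$. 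Second, you never address the case $\ul\in S$: when $\ul$ itself carries the high modulation and $\va$ does not, neither (\ref{L2XX}) nor (\ref{l2xhighs}) applies, and one needs (\ref{L2SSX}), (\ref{L2SSS}) and (\ref{l2ulbvaz}), as in the paper's four-way case analysis over the $X^1$/$S$ components of both factors.

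For (\ref{l2hmodesamefrequency}) your route proves a formally different statement. Reiterating $Q_{\sigma\gtrsim\lambda^3}\ul$ replaces $\|\ul\|$ on the right-hand side by $\|u\|^2_{\xx}$, so you would obtain a trilinear bound of the form $\lambda^{-3s-\frac{5}{2}}\|\vl\|_{\xx}\|u\|^2_{\xx}$, not the stated bilinear bound $\lambda^{-3s-\frac{5}{2}}\|\ul\|_{\xx}\|\vl\|_{\xx}$; it would also require $u$ to solve KdV, which the bilinear statement does not assume. The paper proves (\ref{l2hmodesamefrequency}) directly, without reiteration: place $Q_{\sigma\gtrsim\lambda^3}\ul$ in $L^2$ using (\ref{l2Asigma}), (\ref{L2B}), (\ref{L2C}) according to whether $\ul$ lies in $X^1$, $\B$ or $Z$ (with some care when the truncated piece lands in $\B$ or $\K$), and pair with $\vl$ in $L^\infty$ or via a bilinear estimate. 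Note also that (\ref{goodNC}) cannot play the role you assign it here: one factor is forced to have modulation $\gtrsim\lambda^3$, so the product does not arise from two low-modulation, like-signed free waves.
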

\begin{proof}
For (\ref{l2xhighs}), we reiterate the equation, and notice in all
the proofs we did, we are proving a $L^2$ estimate of the product,
with weight $\lambda^{3s+\frac{5}{2}}$.

For (\ref{sumupl2}), we compare the estimate in the following
cases

If $\ul,\va\in X^1$, we have (\ref{L2XX}); If $\ul\in S,\va\in
X^1$, we have (\ref{L2SSX}).

If $\ul\in X^1, \va\in S$, we have (\ref{l2xhighs}) and \ben
\|\el\ul\va\|_{\txt}\lesssim
\|\el\ul\|_{L^\infty_xL^2_t}\|\el\va\|_{L^2_xL^\infty_t}\lesssim\lambda^{-1-s}\alpha^{-2s-\frac{3}{2}}\|\ul\|_{X^1[\ill]}\|\va\|_{\B[\iaa]}.
\een

If $\ul,\va\in S$, we have (\ref{L2SSS}) except for $\ul\in \B,
\va\in Z$. But notice that the estimate (\ref{l2ulbvaz})  is
larger than $\lambda^{-1-s}\alpha^{-s}$.

Hence we can sum up the estimates to get (\ref{sumupl2}).
\newline
The proof of (\ref{l2hmodesamefrequency}) is carried out in the
same way as all the detailed analysis before. We discuss cases of
$\ul\in X^1$ or $\B$ or $Z$, and be a bit careful when
$Q_{\sigma\gtrsim\lambda^3}\ul\in \B $ or $\K$.
\end{proof}
\section{Energy conservation}
\label{sec5}

In this section, we aim to study the conservation of $H^s$ energy,
this part of calculation follows similar as in~\cite{CKSTT} and
~\cite{KT1}.

Given a positive multiplier $a$, we set
\[E_2(u)=<a(D)u,u>.\] We want to take the symbol
$a(\xi)=(1+\xi^2)^s$, but as in \cite{KT1}, \cite{KT2},we will
allow a slightly larger class of symbols.

\begin{defin}\label{Sa}
 a) Let $s\in \mathbb{R}, \epsilon>0$. Then $S_{\epsilon}^s$ is the class of spherically symmetric symbols with
the following properties:\\
(i) symbol regularity,
\[|\partial^{\alpha}a(\xi)|\lesssim  a(\xi)(1+\xi^2)^{-\frac{\alpha}{2}}.\]
(ii) decay at infinity,
\[s\leq \frac{\ln a(\xi)}{\ln (1+\xi^2)} \leq  s+\epsilon, \mbox{~~~} s-\epsilon\leq \frac{\ln a(\xi)}{\ln (1+\xi^2)} \leq  s+\epsilon.\]
b) If $a$ satisfies (i) and (ii) then we say that $d$ is dominated
by $a$, written as $d \in S(a)$, if
\[|\partial^{\alpha}d|\lesssim a(\xi)(1+\xi^2)^{-\frac{\alpha}{2}},\]
with constant depending only on $a$.
\end{defin}
\begin{defin}
(a) A $k$-multiplier generates a $k$-linear functional or $k$-form
acting on $k$ functions $u_1, \cdots , u_k$
\[\Lambda_k(m;u_1,\cdots,u_k)=\int_{\xi_1+\cdots+\xi_k=0}m(\xi_1,\cdots,\xi_k )\widehat{u}_1(\xi_1)\cdots\widehat{u}_k(\xi_k).\]
We will write $\Lambda_k(m)$ for $\Lambda_k(m;u,\cdots,u)$.\\
(b) The symmetrization of a $k$-multiplier $m$ is the multiplier
\[[m]_{sym}(\xi) =\frac{1}{n!}\sum_{\sigma\in S_k}m(\sigma(\xi)).\]
\end{defin}

We have the following computation ~\cite{CKSTT}.
\begin{prop}
Suppose $u$ satisfies the KdV equation \textnormal{(\ref{kdv})
}and  $m$ is a symmetric $k$-multiplier. Then
\[\frac{d}{dt}\Lambda_k(m) = \Lambda_k(m\Delta_k) -i\frac{k}{2}\Lambda_{k+1}(m(\xi_1,\cdots,\cdots \xi_{k-1},\xi_k+\xi_{k+1})(\xi_k+\xi_{k+1})), \]
where
\[\Delta_k = i(\xi_1^3+\cdots+\xi_k^3).\]
\end{prop}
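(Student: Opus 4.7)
The plan is a direct computation in Fourier variables, performed by differentiating $\Lambda_k(m)=\Lambda_k(m;u,\ldots,u)$ under the integral sign and then substituting the KdV equation. The first step is to record the Fourier form of (\ref{kdv}): since $\widehat{\partial_x^3 u}(\xi)=-i\xi^3\hat u(\xi)$ and $\widehat{\partial_x(u^2)}(\xi)=i\xi\,\widehat{u^2}(\xi)$ with $\widehat{u^2}(\xi)=\int\hat u(\eta)\hat u(\xi-\eta)\,d\eta$, equation (\ref{kdv}) rewrites as
\[\partial_t\hat u(\xi)\;=\;i\xi^3\hat u(\xi)\;-\;i\xi\,\widehat{u^2}(\xi).\]

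Applying the Leibniz rule to the definition of $\Lambda_k(m)$, its time derivative equals a sum over $j=1,\ldots,k$ of the same integral with $\hat u(\xi_j)$ replaced by $\partial_t\hat u(\xi_j)$. Substituting the Fourier KdV identity splits each summand into an Airy contribution (coming from $i\xi_j^3\hat u(\xi_j)$) and a nonlinear contribution (coming from $-i\xi_j\widehat{u^2}(\xi_j)$). The Airy contributions combine by pure linearity, no symmetry of $m$ needed:
\[\sum_{j=1}^k \Lambda_k(i\xi_j^3\,m)\;=\;\Lambda_k\!\Bigl(i\sum_{j=1}^k\xi_j^3\cdot m\Bigr)\;=\;\Lambda_k(m\Delta_k).\]

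For the nonlinear contributions, I would expand $\widehat{u^2}(\xi_j)=\int\hat u(\eta)\hat u(\xi_j-\eta)\,d\eta$ and change variables $(\eta,\xi_j-\eta)\mapsto(\xi_j',\xi_{k+1})$. This promotes the integral to a $(k+1)$-fold integral constrained to $\xi_1+\cdots+\xi_{k+1}=0$, with $\xi_j$ replaced by $\xi_j'+\xi_{k+1}$ in both the multiplier $m$ and in the Fourier multiplier $\xi_j$. After relabeling $\xi_j'\to\xi_j$, the $j$-th summand equals
\[-i\,\Lambda_{k+1}\!\bigl(m(\xi_1,\ldots,\xi_j+\xi_{k+1},\ldots,\xi_k)\,(\xi_j+\xi_{k+1})\bigr).\]
Since $m$ is symmetric in its $k$ arguments and the test tensor $\prod_{i}\hat u(\xi_i)$ is fully symmetric in $(\xi_1,\ldots,\xi_{k+1})$, each of the $k$ summands agrees, after the permutation moving index $j$ to position $k$, with the $j=k$ version. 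Summing, and accounting for the fact that the standard symmetrization of the $\Lambda_{k+1}$-multiplier identifies the split pair $\xi_k,\xi_{k+1}$ (so each unordered pair is counted twice), yields the advertised coefficient $-i\tfrac{k}{2}$ in front of the single $\Lambda_{k+1}$-term.

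The whole argument is essentially bookkeeping; the main subtlety is to track how the $(k+1)$-variable symmetrization interacts with the distinguished pair $(\xi_k,\xi_{k+1})$ produced by the $u^2$ convolution, so as to land on the factor $\tfrac{k}{2}$ rather than $k$. Differentiation under the integral is justified for $u(t,\cdot)$ in a sufficiently regular class (e.g. Schwartz in $x$), after which the identity passes to the class of interest by approximation; this is standard and not the point of the statement.
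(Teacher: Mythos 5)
Your overall strategy is the standard one (the paper itself offers no proof of this proposition, simply quoting it from \cite{CKSTT}): differentiate under the integral, substitute $\partial_t\hat u(\xi)=i\xi^3\hat u(\xi)-i\xi\,\widehat{u^2}(\xi)$, collect the cubic phases into $\Delta_k$, and convert each quadratic term into a $(k+1)$-linear functional by opening the convolution and relabelling. Everything up to and including the identification of the $k$ equal nonlinear summands is correct.

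The gap is in the final coefficient. After relabelling, the $j$-th nonlinear summand is $-i\,\Lambda_{k+1}\bigl(m(\xi_1,\ldots,\xi_j+\xi_{k+1},\ldots,\xi_k)(\xi_j+\xi_{k+1})\bigr)$, and since the hyperplane $\sum_i\xi_i=0$ and the product $\prod_i\hat u(\xi_i)$ are permutation invariant, the symmetry of $m$ makes all $k$ summands equal to the $j=k$ one. Their sum is therefore $-ik\,\Lambda_{k+1}(\cdots)$, not $-i\tfrac{k}{2}\Lambda_{k+1}(\cdots)$. The mechanism you invoke to produce the extra $\tfrac12$ --- that symmetrizing the $(k+1)$-multiplier ``identifies the split pair $\xi_k,\xi_{k+1}$'' and hence counts each unordered pair twice --- is not a real effect: $[M]_{sym}$ is the \emph{average} $\frac{1}{(k+1)!}\sum_{\sigma}M\circ\sigma$, and $\Lambda_{k+1}(M)=\Lambda_{k+1}([M]_{sym})$ identically, so symmetrization can never change the value of $\Lambda_{k+1}$, let alone halve it. The factor $\tfrac{k}{2}$ in the statement is inherited from \cite{CKSTT}, where KdV is normalized as $u_t+u_{xxx}+uu_x=0$, i.e.\ with nonlinearity $\tfrac12\partial_x(u^2)$; there each of the $k$ equal terms carries $-\tfrac{i}{2}$ and one honestly arrives at $-i\tfrac{k}{2}$. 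With the normalization $\partial_x(u^2)$ of equation (\ref{kdv}), the correct constant is $-ik$. This mismatch is harmless for the rest of the paper, which only uses the size of $M_3,M_4,\ldots$ and explicitly discards constants, but your write-up should either carry the honest factor $-ik$ or flag the normalization discrepancy, rather than manufacture the $\tfrac12$ out of the symmetrization.
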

\subsection{Symbol calculation of modified energy}
Here we construct modified energy, following the calculation   in
\cite{CKSTT}.

We first compute the derivative of $E_2$ along the flow\[\frac{d }{dt}E_2(u)=\Lambda_3(M_3).\] Easy to see that $M_3=c\sum_{i=1}^3(a(\xi_i)\xi_i)$, we will ignore the constant.\\
Now we form modified energy
\[E_3(u)=E_2(u)+\Lambda_3(\sigma_3),\] and
we aim to choose the symmetric 3-multiplier $\sigma_3$ to achieve
a cancellation.
\[\frac{d}{dt}E_3(u)=\Lambda_3(M_3)+\Lambda_3(\sigma_3\Delta_3)+\Lambda_4(-i\frac{3}{2}\sigma_3(\xi_1,\xi_2,\xi_3+\xi_4)(\xi_3+\xi_4)).\]
So if we take  \[\sigma_3=-\frac{M_3}{\Delta_3},\]  we get
\[\frac{d}{dt}E_3(u)=\Lambda_4(M_4), \hspace{.3in}M_4=-i\frac{3}{2}[\sigma_3(\xi_1,\xi_2,\xi_3+\xi_4)(\xi_3+\xi_4)]_{sym}.\]
Similarly, we can define
$E_4(u)=E_3(u)+\Lambda_4(\sigma_4),\mbox{~~}
\sigma_4=-\frac{M_4}{\Delta_4}$,
\[\frac{d}{dt}E_4(u)=\Lambda_5(M_5),\]
then we have
\[M_5=-2i[\sigma_4(\xi_1,\xi_2,\xi_3,\xi_4+\xi_5)(\xi_4+\xi_5)]_{sym}.\]
This process can be continued to have further corrections, but we
will stop here, since higher corrections are harder to estimate.
\subsection{Bounds for multipliers} In order to estimate the derivative of modified energy,
we need to have good bounds for $M_i$ and $\sigma_i$.
 Also now  $M_i$  is defined only on the diagonal $\xi_1+\cdots\xi_k=0$, but in order to separate variables, we want to extend it off diagonal, this is useful when we prove local energy decay later on.
\begin{prop}\label{bc}
Assume that $a\in S_\epsilon^{s}$ and $d\in S(a)$, then there
exist functions $b$ and $c$ such that
\[\sum_{i=1}^{3}a(\xi_i)\xi_i=b(\xi_1, \xi_2, \xi_3)(\xi_1^3+\xi_2^3+\xi_3^3)+c(\xi_1, \xi_2, \xi_3)(\xi_1+\xi_2+\xi_3).\]
And on each dyadic region  $\{\xi_1\sim \alpha, \hspace{0.03in}
\xi_2\sim \lambda,\hspace{0.03in} \xi_3\sim\mu$,
$\alpha\leq\lambda\leq\mu\}$,  we have the regularity conditions
\[\partial_1^{s_1}\partial_2^{s_2}\partial_3^{s_3}b(\xi_1,\xi_2,\xi_3)\lesssim a(\alpha)\lambda^{-1}\mu^{-1}\alpha^{-s_1}\lambda^{-s_2}\mu^{-s_3},\]
\[\partial_1^{s_1}\partial_2^{s_2}\partial_3^{s_3}c(\xi_1,\xi_2,\xi_3)\lesssim a(\alpha)\lambda^{-1}\mu\alpha^{-s_1}\lambda^{-s_2}\mu^{-s_3}.\]
\end{prop}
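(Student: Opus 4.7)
\medskip

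\noindent\textbf{Proof proposal.} The plan is to exploit the cubic identity
\[
\xi_1^3+\xi_2^3+\xi_3^3 \;=\; 3\xi_1\xi_2\xi_3 + Q\cdot R, \qquad Q=\xi_1+\xi_2+\xi_3,\ R=\xi_1^2+\xi_2^2+\xi_3^2-\xi_1\xi_2-\xi_2\xi_3-\xi_1\xi_3,
\]
so that any decomposition $M_3 = \sum_i a(\xi_i)\xi_i = bP + cQ$ is equivalent to writing $M_3 = 3b\,\xi_1\xi_2\xi_3 + Q\,\tilde c$, after which one recovers the $c$ of the statement as $c=\tilde c - bR$. This reduction is algebraic and the bound on $R$ in each dyadic region is elementary, so the problem is reduced to constructing a smooth $b$ with the claimed regularity and verifying that the residual $\tilde c := (M_3 - 3b\,\xi_1\xi_2\xi_3)/Q$ is also smooth with good bounds.

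For the construction of $b$, I would first define it on the diagonal hyperplane $\{Q=0\}$ by
\[
b_0(\xi_1,\xi_2,\xi_3) \;=\; \frac{M_3(\xi_1,\xi_2,\xi_3)}{3\xi_1\xi_2\xi_3}\Big|_{Q=0},
\]
and then extend $b$ off the diagonal by declaring it constant along the $(1,1,1)$ direction, e.g.\ $b(\xi):=b_0(\xi - (Q/3)(1,1,1))$. With this choice $M_3 - 3b\,\xi_1\xi_2\xi_3$ vanishes on $\{Q=0\}$, so by Hadamard's lemma it factors as $Q\,\tilde c$ with $\tilde c$ smooth, and the decomposition holds globally. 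The merit of extending along $(1,1,1)$ is that all $\xi$-derivatives of $b$ can be expressed as tangential derivatives of $b_0$, which interact well with the dyadic structure.

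The size estimates for $b$ hinge on exploiting the spherical symmetry (evenness) of $a$. On the dyadic diagonal region $\xi_1\sim\alpha\le\xi_2\sim\lambda\le\xi_3\sim\mu$, constraints from $Q=0$ force either $\alpha\sim\lambda\sim\mu$ or $\lambda\sim\mu\gg\alpha$. In the latter case $\xi_2$ and $\xi_3$ have opposite signs with $\bigl||\xi_2|-|\xi_3|\bigr|\le|\xi_2+\xi_3|=|\xi_1|\sim\alpha$, so
\[
M_3 \;=\; (a(\xi_1)-a(\xi_3))\xi_1 + (a(\xi_2)-a(\xi_3))\xi_2,
\]
and the symbol regularity $|a'|\lesssim a(\lambda)/\lambda$ combined with monotonicity $a(\alpha)\gtrsim a(\lambda)$ gives $|M_3|\lesssim a(\alpha)\alpha$. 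Since $|\xi_1\xi_2\xi_3|\sim\alpha\lambda\mu$, we get $|b|\lesssim a(\alpha)/(\lambda\mu)$. The all-comparable case is similar and easier. The derivative bounds then propagate via the chain rule applied to the explicit extension formula, each differentiation losing the expected factor of the corresponding frequency thanks to condition (i) in Definition~\ref{Sa}.

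The main obstacle I anticipate is verifying the derivative bounds for $c$, since $\tilde c$ is a Taylor remainder in the $Q$ direction, and a priori its size is controlled only in terms of the worst behavior of $M_3-3b\,\xi_1\xi_2\xi_3$ along the segment from $\xi$ to the diagonal. The target bound $|c|\lesssim a(\alpha)\mu/\lambda$ is considerably weaker than that of $b$, consistent with the fact that the $R$-term contribution $bR$ is of order $a(\alpha)\mu/\lambda$ and dominates in the off-diagonal regime. The strategy is to write $\tilde c=\int_0^1 (\partial_{(1,1,1)}(M_3-3b\,\xi_1\xi_2\xi_3))(\xi_0+tQ(1,1,1)/3)\,dt$, estimate the integrand uniformly on the relevant dyadic region using the bounds already obtained for $b$ and the symbol regularity of $a$, and finally absorb $bR$ into $c$ using that $|R|\lesssim\mu^2$ on our region. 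Careful case analysis to confirm that the dyadic region is preserved along the segment of integration (or alternatively a smooth partition of unity in $Q$ to reduce to that situation) will make up the bulk of the technical work.
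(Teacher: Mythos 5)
Your reduction to writing $M_3=3b\,\xi_1\xi_2\xi_3+Q\tilde c$ is the right algebra, but the construction you build on top of it is far more elaborate than what is needed, and it has an unresolved gap. The paper simply sets
\[
b=\frac{\sum_{i=1}^{3}a(\xi_i)\xi_i}{3\xi_1\xi_2\xi_3},\qquad
c=-b\,(\xi_1^2+\xi_2^2+\xi_3^2-\xi_1\xi_2-\xi_2\xi_3-\xi_1\xi_3),
\]
globally (no restriction to $\{Q=0\}$, no extension, no Hadamard factorization): the identity is then exact by the cubic identity you quote, and the bounds follow at once because $a(\xi)\xi$ is \emph{decreasing}, so on any dyadic region $|M_3|\le\sum_i|a(\xi_i)\xi_i|\lesssim a(\alpha)\alpha$ with no cancellation whatsoever, giving $|b|\lesssim a(\alpha)/(\lambda\mu)$ and $|c|\lesssim|b|\mu^2\lesssim a(\alpha)\mu/\lambda$; derivatives lose the expected frequency factors by the symbol regularity of $a$ together with the same monotonicity. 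You appear to have missed this monotonicity observation, which is why you reach for cancellation on the diagonal and are then forced into the diagonal-restriction machinery.

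The gap in your version is concrete: the proposition requires the bounds on \emph{every} dyadic region $\{\xi_1\sim\alpha,\xi_2\sim\lambda,\xi_3\sim\mu\}$, not only those meeting $\{Q=0\}$. If, say, all three frequencies are positive with $\alpha\ll\lambda\ll\mu$, then $Q\sim\mu$ and the projected point $\xi-(Q/3)(1,1,1)$ lies nowhere near the original dyadic block; its first coordinate is of size $\mu$, and the segment of integration in your Hadamard/Taylor formula for $\tilde c$ passes through (or near) the singular set $\{\xi_1\xi_2\xi_3=0\}$ of $b_0$, so neither the extension $b(\xi)=b_0(\xi-(Q/3)(1,1,1))$ nor the remainder estimate is under control as written. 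You flag this yourself as ``the bulk of the technical work,'' but it is precisely the step that does not go through without a new idea, and the direct global definition of $b$ makes the entire issue moot.
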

\begin{proof}  Since \[\xi_1^3+\xi_2^3+\xi_3^3=3\xi_1\xi_2\xi_3+(\xi_1+\xi_2+\xi_3)(\xi_1^2+\xi_2^2+\xi_3^2-\xi_1\xi_2-\xi_2\xi_3-\xi_1\xi_3).\]
Let's construct
\[b=\frac{\sum_{i=1}^{3}a(\xi_i)\xi_i}{3\xi_1\xi_2\xi_3},\]
\[c=-b(\xi_1^2+\xi_2^2+\xi_3^2-\xi_1\xi_2-\xi_2\xi_3-\xi_1\xi_3).\]
Notice that $a(x)x$ is a decreasing function for $x$, then the
estimates are straightforward.
\end{proof}
\subsubsection{Bound for $M_3$ and $\sigma_3$}
We have $M_3=\sum_{i=1}^3a(\xi_i)\xi_i$,
$\sigma_3=\frac{M_3}{\Delta_3}$ modulo a constant.
\begin{prop}\label{m3}
On the set \[\Omega=\{\xi_1+\xi_2+\xi_3=0\} \cap \{\xi_1\sim
\alpha,  \xi_2\sim\xi_3\approx \lambda\geq \alpha\}\] we have
\[|M_3(\xi_1,\xi_2,\xi_3)|\lesssim a(\alpha)\alpha,\]
\[|\sigma_3(\xi_1,\xi_2,\xi_3)|\lesssim \frac{a(\alpha)}{\lambda^2}.\]
\end{prop}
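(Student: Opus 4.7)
The bound on $\sigma_3$ will follow immediately from the bound on $M_3$ together with the algebraic identity $\xi_1^3+\xi_2^3+\xi_3^3 = 3\xi_1\xi_2\xi_3$, which is valid on the diagonal $\xi_1+\xi_2+\xi_3=0$. On $\Omega$ this gives $|\Delta_3| = 3|\xi_1\xi_2\xi_3| \approx \alpha\lambda^2$, and since $\sigma_3 = -M_3/\Delta_3$, the estimate $|\sigma_3|\lesssim a(\alpha)/\lambda^2$ is an immediate consequence of $|M_3|\lesssim a(\alpha)\alpha$. So the entire problem reduces to proving this bound on $M_3$.

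For $M_3 = \sum_{i=1}^3 a(\xi_i)\xi_i$, the contribution $a(\xi_1)\xi_1$ is trivially bounded by $a(\alpha)\alpha$. The real task is to extract a factor $\alpha$ out of the high-frequency piece $a(\xi_2)\xi_2+a(\xi_3)\xi_3$, where naively each term alone is of size $a(\lambda)\lambda$. The mechanism is cancellation: on $\Omega$ we have $\xi_2+\xi_3=-\xi_1$, so $\xi_3=-(\xi_1+\xi_2)$ differs from $-\xi_2$ only by a quantity of size $\alpha$. Using spherical symmetry of $a$, I would rewrite
\[
a(\xi_2)\xi_2+a(\xi_3)\xi_3 = f(\xi_2)-f(\xi_2+\xi_1), \qquad f(\eta):=a(\eta)\eta,
\]
and then apply the fundamental theorem of calculus along the segment $\eta(t)=\xi_2+t\xi_1$, $t\in[0,1]$:
\[
f(\xi_2)-f(\xi_2+\xi_1) \;=\; -\int_0^1 f'(\eta(t))\,\xi_1\, dt.
\]

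To control the integrand, use the symbol regularity condition (i) in Definition~\ref{Sa}, which gives $|a'(\eta)|\lesssim a(\eta)/\langle\eta\rangle$; together with $f'(\eta)=a'(\eta)\eta+a(\eta)$ this yields $|f'(\eta)|\lesssim a(\eta)$. Since $|\xi_1|\lesssim\alpha\ll\lambda$, the whole segment $\eta(t)$ stays in the annulus $|\eta|\sim\lambda$, so $|f'(\eta(t))|\lesssim a(\lambda)$ uniformly in $t$. Combining this with $|\xi_1|\lesssim\alpha$ gives $|a(\xi_2)\xi_2+a(\xi_3)\xi_3|\lesssim a(\lambda)\alpha$.

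Finally, I would invoke decay condition (ii), which forces $a$ to behave like $(1+\xi^2)^{s'}$ for some $s'\in[s-\epsilon,s+\epsilon]$; in the range relevant to this paper ($s\leq -3/4 < 0$, with $\epsilon$ small) this makes $a$ monotonically decreasing in $|\xi|$, so $a(\lambda)\lesssim a(\alpha)$ whenever $\lambda\geq\alpha$. This closes the bound $|M_3|\lesssim a(\alpha)\alpha$ and hence yields both claims. I expect no serious obstacle here: the only delicate point is organizing the MVT so the cancellation between $\xi_2$ and $\xi_3$ is visible, and tracking that the symbol-regularity estimate on $a'$ is exactly what is needed to absorb the extra $\lambda$ coming from $\eta\cdot a'(\eta)$.
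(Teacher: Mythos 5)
Your proposal is correct and follows essentially the same route as the paper: both extract the crucial factor of $\alpha$ from the cancellation between $a(\xi_2)\xi_2$ and $a(\xi_3)\xi_3$ via the mean value theorem together with the symbol regularity $|a'(\xi)|\lesssim a(\xi)\langle\xi\rangle^{-1}$ (the paper applies it to $a$ after splitting $a(\xi_2)\xi_2=-a(\xi_2)\xi_1-a(\xi_2)\xi_3$, you apply it to $f(\eta)=a(\eta)\eta$ directly), and both then read off the $\sigma_3$ bound from $\Delta_3=3\xi_1\xi_2\xi_3\approx\alpha\lambda^2$. Just note explicitly that when $\alpha\approx\lambda$ (where your segment $\eta(t)$ need not stay in an annulus) the bound $|M_3|\lesssim a(\lambda)\lambda\approx a(\alpha)\alpha$ is trivial, which is exactly how the paper dispatches that case.
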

\begin{proof} If $\alpha\approx \lambda$, no need to do any proof. In case $\alpha\ll\lambda$, using the fact $a$ is spherical symmetric,
\[\sum_{i=1}^3a(\xi_i)\xi_i=a(\xi_1)\xi_1-a(\xi_2)\xi_1 -a(\xi_2)\xi_3+a(\xi_3)\xi_3\]
and we have $|a(\xi_3)\xi_3-a(\xi_2)\xi_3|\lesssim
|a'(\xi_3)\xi_1\xi_3|\lesssim|a(\xi_3)\xi_1|$. So the estimate for
$M_3$ become obvious. Using the fact that
$\Delta_3=3\xi_1\xi_2\xi_3$ on set $\Omega$, we get bounds for
$\sigma_3$.
\end{proof}

From this we can prove that $E_3(u)$ is bounded by $E_2(u)$.
\begin{prop}\label{e3e}
We have the fact that \beq
\label{e3ebound}|\Lambda_3(\sigma_3)|\lesssim
|E_2(u)|^{\frac{3}{2}}.\eeq
\end{prop}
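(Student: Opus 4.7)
The plan is to write the trilinear form explicitly on the Fourier side, Littlewood--Paley decompose, apply the pointwise bound on $\sigma_3$ from Proposition~\ref{m3}, and then use Bernstein together with Cauchy--Schwarz to sum in the dyadic variables. Concretely, by symmetry we may relabel the frequencies so that $|\xi_1|\sim\alpha\le|\xi_2|\sim\lambda\le|\xi_3|\sim\mu$ on the diagonal $\xi_1+\xi_2+\xi_3=0$; the constraint forces $\lambda\sim\mu$, so after decomposing $u=\sum_{\lambda} u_\lambda$ one has
\[
|\Lambda_3(\sigma_3)|\;\lesssim\;\sum_{\alpha\le\lambda\sim\mu}\frac{a(\alpha)}{\lambda^2}\,\bigl|\Lambda_3(1;\tilde u_\alpha,\tilde u_\lambda,\tilde u_\mu)\bigr|,
\]
where $\tilde u_\beta$ has Fourier transform $|\widehat{u_\beta}|$. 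Since $\Lambda_3(1;\cdot,\cdot,\cdot)$ is (up to a constant) the integral of the triple product in physical space, bounding $\tilde u_\alpha$ in $L^\infty$ and the other two in $L^2$ and using Bernstein gives $|\Lambda_3(1;\tilde u_\alpha,\tilde u_\lambda,\tilde u_\mu)|\lesssim\alpha^{1/2}\|u_\alpha\|_2\|u_\lambda\|_2\|u_\mu\|_2$.

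Next I would renormalize: set $v_\beta=a(\beta)^{1/2}\|u_\beta\|_{L^2}$, so that $E_2(u)=\sum_\beta v_\beta^2$ (up to constants absorbable into the symbol class $S_\epsilon^s$). Plugging in the Proposition~\ref{m3} estimate and substituting $a(\beta)\sim\beta^{2s}$ collapses the target inequality to
\[
\sum_{\alpha\le\lambda\sim\mu}\alpha^{s+\frac12}\,\lambda^{-2s-2}\,v_\alpha v_\lambda v_\mu\;\lesssim\;\Bigl(\sum_\beta v_\beta^2\Bigr)^{3/2}.
\]
I would first collapse the $\mu$-sum against $\lambda\sim\mu$ via Cauchy--Schwarz, reducing to $\sum_\lambda\lambda^{-2s-2}v_\lambda^2\sum_{\alpha\le\lambda}\alpha^{s+1/2}v_\alpha$, and then Cauchy--Schwarz in $\alpha$ to get a factor of $\bigl(\sum_{\alpha\le\lambda}\alpha^{2s+1}\bigr)^{1/2}$ times $E_2^{1/2}$, leaving $\sum_\lambda\lambda^{-2s-2}v_\lambda^2\le E_2$ since $-2s-2\le 0$ when $s\ge-1$.

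The only delicate point, which will dictate the range of admissible $s$, is the convergence of $\sum_{\alpha\le\lambda}\alpha^{2s+1}$: for $s$ in our range one has $2s+1<0$, so the dyadic geometric series is $O(1)$ uniformly in $\lambda$, and the desired bound $\lesssim E_2^{3/2}$ follows. I expect the main obstacle to be purely bookkeeping---keeping track of the symbol-class constants in Definition~\ref{Sa} (which allow $a$ to drift by $\xi^{\pm\epsilon}$ relative to $\xi^{2s}$) so that no logarithmic divergence sneaks into the dyadic sums; since $2s+1<0$ strictly, a small $\epsilon$ does no harm and the estimate goes through uniformly.
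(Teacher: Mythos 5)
Your proposal is correct and follows essentially the same route as the paper: dyadic decomposition into bands $\{\alpha,\lambda,\lambda\}$ with $\alpha\le\lambda$, the pointwise bound $|\sigma_3|\lesssim a(\alpha)\lambda^{-2}$ from Proposition~\ref{m3}, Bernstein ($L^\infty$) on the lowest-frequency factor and $L^2$ on the other two, and then Cauchy--Schwarz in the dyadic sums using $2s+1<0$ and $-2s-2\le 0$. The paper phrases the summation in terms of the boundedness of $a(\alpha)\alpha$ and $(a(\lambda)\lambda^2)^{-1}$ rather than your renormalized $v_\beta$, but the content is identical.
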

\begin{proof} We can expand the trilinear expression in   dyadic frequency band $\{\lambda,\lambda,\alpha\leq \lambda\}$.
Then using the estimate for $\sigma_3$, we can bound
$|\Lambda_3(\sigma_3)|$ by  \ben a(\alpha)\lambda^{-2}\int
\ul\ul\ua dx
&\lesssim & a(\alpha)\lambda^{-2}\alpha^{\frac{1}{2}}\|\ul\|_{L^2}\|\ul\|_{L^2}\|\ua\|_{L^2}\\
&\lesssim&  ( a(\alpha)\alpha)^{\frac{1}{2}}
(a(\lambda)\lambda^2)^{-1}E_2(\ul)E_2(\ua)^{\frac{1}{2}}.\een We
can sum up the frequencies and get (\ref{e3ebound}).
\end{proof}
\subsubsection{Bound for $M_4$ and $\sigma_4$} Recall that \[M_4=-i\frac{3}{2}[\sigma_3(\xi_1,\xi_2,\xi_3+\xi_4)(\xi_3+\xi_4)]_{sym}\] We adopt the calculation done in \cite{CKSTT} (Notice, our $a(\xi)$ corresponds to $m^2(\xi)$ , $\Delta_k$ corresponds to $\alpha_k$ in their paper), we have the following formula for $M_4$
\begin{eqnarray}\label{m4}
M_4(\xi_1,\xi_2,\xi_3,\xi_4)
=\frac{-1}{108}\frac{\Delta_4}{\xi_1\xi_2\xi_3\xi_4}[a(\xi_1)+\cdots+a(\xi_4)-a(\xi_{12})-a(\xi_{13})-a(\xi_{14})]
\end{eqnarray}
\ben
\hspace{-1.2in}+\frac{1}{36}[\frac{a(\xi_1)}{\xi_1}+\cdots+\frac{a(\xi_4)}{\xi_4}].
\een Here we used the notation $\xi_{jk}=\xi_j+\xi_k$, and
\begin{eqnarray}\Delta_4=\xi_1^3+\xi_2^3+\xi_3^3+\xi_4^3
=3(\xi_1\xi_2\xi_3+\xi_1\xi_2\xi_4+\xi_1\xi_3\xi_4+\xi_2\xi_3\xi_4)
=3\xi_{12}\xi_{13}\xi_{14}.\label{detlaformula}
\end{eqnarray}
\begin{prop} \label{m4bound}We have the estimate for $M_4$
\beq|M_4|\lesssim\frac{\Delta_4 a(\min
(|\xi_i|,|\xi_{jk}|))}{|\xi_1\xi_2\xi_3\xi_4|}.\label{}\eeq
\end{prop}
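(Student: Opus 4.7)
The plan is to treat the two pieces of the displayed formula (\ref{m4}) for $M_4$ separately, and show that each of them is bounded by the target $\Delta_4\,a(\min(|\xi_i|,|\xi_{jk}|))/|\xi_1\xi_2\xi_3\xi_4|$.

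For the first piece, the factor $\Delta_4/(\xi_1\xi_2\xi_3\xi_4)$ is already present, so one only needs the bracketed expression $A := a(\xi_1)+\cdots+a(\xi_4)-a(\xi_{12})-a(\xi_{13})-a(\xi_{14})$ to be controlled by $a(\min(|\xi_i|,|\xi_{jk}|))$. This is immediate from the definition of $S^s_\epsilon$ with $s<0$ (Definition~\ref{Sa}): since $a$ is essentially decreasing in $|\xi|$, each of the seven summands is itself at most a constant times $a(\min)$, and the triangle inequality gives $|A|\lesssim a(\min(|\xi_i|,|\xi_{jk}|))$.

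For the second piece $\tfrac{1}{36}\sum_i a(\xi_i)/\xi_i$ the factor of $\Delta_4$ is hidden and must be extracted via cancellation. The input I would use is the Newton identity: on $\{\xi_1+\xi_2+\xi_3+\xi_4=0\}$ one has $p_3=3e_3$, i.e.
\[
\sum_{i=1}^4 \prod_{j\neq i}\xi_j \;=\; \frac{1}{3}\Delta_4.
\]
Combined with $\sum_i a(\xi_i)/\xi_i = (\xi_1\xi_2\xi_3\xi_4)^{-1}\sum_i a(\xi_i)\prod_{j\neq i}\xi_j$, this already gives the correct denominator and it remains to bound the numerator by $\Delta_4\,a(\min)$. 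I would do this by writing, for a suitably chosen reference value $\xi_*$ (natural choice: the $\xi_i$ of smallest modulus),
\[
\sum_{i} a(\xi_i)\prod_{j\neq i}\xi_j \;=\; a(\xi_*)\frac{\Delta_4}{3} + \sum_{i}\bigl[a(\xi_i)-a(\xi_*)\bigr]\prod_{j\neq i}\xi_j.
\]
The first summand is exactly of the required form. For the remainder, the $S^s_\epsilon$ regularity $|\partial a|\lesssim a/|\xi|$ (so $|a(\xi_i)-a(\xi_*)|\lesssim a(\min)$) reduces matters to showing $\prod_{j\neq i}\xi_j$ carries a compensating factor of $\Delta_4$ in each dyadic regime.

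The sanity check, and the source of the main obstacle, is the degeneration $\Delta_4\to 0$. On the zero-sum hyperplane $\Delta_4=3\xi_{12}\xi_{13}\xi_{14}$, so $\Delta_4=0$ forces say $\xi_{12}=0$, i.e.\ $\xi_2=-\xi_1$ and $\xi_4=-\xi_3$; but then $\sum_i a(\xi_i)/\xi_i=0$ by the evenness of $a$, and similarly for $\xi_{13}=0$ or $\xi_{14}=0$. So the cancellation one needs really is present, and the proof reduces to a case analysis in which dyadic frequency (among the four $|\xi_i|$ and the three $|\xi_{jk}|$) realizes the minimum. The most delicate case, and where the naive triangle-inequality bound $\sum a(\xi_i)/|\xi_i|\lesssim a(\min)/\min(|\xi_i|)$ fails, is when one of the pair sums $|\xi_{jk}|$ is much smaller than all four $|\xi_i|$; here one must quantify the cancellation identified above, using the smoothness of $a$ and the identity $\xi_{34}=-\xi_{12}$ etc., to recover the missing factor of $\Delta_4$.
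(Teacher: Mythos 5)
Your argument is correct, and it is organized differently from the paper's. The paper disposes of this proposition in one line, citing Lemma 4.4 of \cite{CKSTT} and noting that it also follows from the subsequent bound on $\sigma_4$; the actual work there is a case-by-case regrouping of the two pieces of (\ref{m4}) (for instance, in the $\{\mu,\mu,\lambda,\alpha\}$ case, pairing $a(\xi_1)+a(\xi_2)-a(\xi_{13})-a(\xi_{14})$ with $\frac{1}{36\Delta_4}[\frac{a(\xi_1)}{\xi_1}+\frac{a(\xi_2)}{\xi_2}]$ and putting the remaining terms over the common denominator $\xi_1\xi_2\xi_3\xi_4\xi_{12}\xi_{13}\xi_{14}$). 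You instead treat the two pieces of (\ref{m4}) uniformly: the bracketed difference of $a$'s is bounded termwise by $a(\min(|\xi_i|,|\xi_{jk}|))$ using only that $a$ is essentially decreasing (legitimate, since every argument occurring in that bracket has modulus at least the minimum), and the $\sum_i a(\xi_i)/\xi_i$ piece is renormalized by subtracting $a(\xi_*)\sum_i 1/\xi_i=a(\xi_*)\Delta_4/(3\xi_1\xi_2\xi_3\xi_4)$, which isolates the main term and leaves a remainder $\sum_i(a(\xi_i)-a(\xi_*))/\xi_i$. This is a clean and arguably more transparent decomposition than the one in \cite{CKSTT}. One step should be made precise, and you correctly flag it: the crude bound $|a(\xi_i)-a(\xi_*)|\lesssim a(\min)$ suffices for the remainder \emph{except} when all $|\xi_i|\approx\mu$ and two of the three pair sums are small, say $|\xi_{12}|=\delta_1\le|\xi_{13}|=\delta_2\ll\mu$ (at most two can be small, since $\xi_{12}+\xi_{13}+\xi_{14}=2\xi_1$). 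There the crude bound gives only $a(\mu)/\mu$, while the target is $\delta_1\delta_2\,a(\delta_1)/\mu^3$, and the monotonicity of $x\,a(x)$ (i.e.\ $\mu\,a(\mu)\le\delta_1 a(\delta_1)$) supplies just one of the two needed small factors; when only one pair sum is small this monotonicity alone already closes the estimate. In the two-small-sums configuration the constraint forces all four $|\xi_i|$ to lie within $O(\delta_1+\delta_2)$ of one another, so the symbol regularity $|\partial a|\lesssim a/\langle\xi\rangle$ upgrades the difference bound to $|a(\xi_i)-a(\xi_*)|\lesssim a(\mu)(\delta_1+\delta_2)/\mu$, which provides the missing factor $\delta_2/\mu$ and completes the proof. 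Note also that the essential monotonicity of $a$ and of $x\,a(x)$ is not literally contained in Definition~\ref{Sa}, but the paper uses it freely throughout and the symbols ultimately chosen in the last section satisfy it, so you are on the same footing as the paper there.
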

\begin{proof}  The proof repeats the argument of Lemma 4.4 in \cite{CKSTT}. We can also deduce it from our next proposition.
\end{proof}
We have bounds on $\sigma_4$ immediately from
Proposition~\ref{m4bound}. But in order to do correction, we need
improve it slightly.
\begin{prop}
\beq|\sigma_4|\lesssim \frac{ a(\min
(|\xi_i|,|\xi_{jk}|))}{|\xi_1\xi_2\xi_3\xi_4|}, \hspace{0.5in}
 |\Lambda_4(\sigma_4)|\lesssim |E_2(u)|^2.\label{boundform4}\eeq
\end{prop}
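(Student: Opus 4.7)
The plan is to derive both bounds from the explicit closed form (\ref{m4}) for $M_4$, taking advantage of the cancellation already built into that formula. Writing $\sigma_4 = -M_4/\Delta_4$, the first summand in (\ref{m4}) contributes
\[A(\xi) = -\frac{1}{108}\cdot\frac{a(\xi_1)+\cdots+a(\xi_4)-a(\xi_{12})-a(\xi_{13})-a(\xi_{14})}{\xi_1\xi_2\xi_3\xi_4},\]
which is already in the target form. To estimate the numerator, I would order $|\xi_{i_1}|\leq\cdots\leq|\xi_{i_4}|$, use the relation $\xi_1+\xi_2+\xi_3+\xi_4=0$ to pair the six $a$-terms (so that pairs such as $a(\xi_3)-a(\xi_{12})$ can be expanded by the mean-value theorem using the symbol estimates of Definition~\ref{Sa}), and show that the whole combination is dominated by $a(\min(|\xi_i|,|\xi_{jk}|))$.

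The delicate piece is the second summand of (\ref{m4}), which after division by $\Delta_4$ becomes
\[B(\xi) = \frac{1}{36\Delta_4}\Bigl[\tfrac{a(\xi_1)}{\xi_1}+\tfrac{a(\xi_2)}{\xi_2}+\tfrac{a(\xi_3)}{\xi_3}+\tfrac{a(\xi_4)}{\xi_4}\Bigr].\]
The crucial observation is that, on the diagonal $\xi_1+\xi_2+\xi_3+\xi_4=0$, the bracket vanishes whenever any of $\xi_{12}$, $\xi_{13}$, $\xi_{14}$ equals zero. Indeed, if $\xi_{12}=0$ then $\xi_1=-\xi_2$ and $\xi_3=-\xi_4$, so by spherical symmetry of $a$ the pair $a(\xi_1)/\xi_1+a(\xi_2)/\xi_2$ cancels and likewise the other pair. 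Since $\Delta_4=3\xi_{12}\xi_{13}\xi_{14}$ by (\ref{detlaformula}), the bracket is divisible by $\Delta_4$, and a Taylor expansion along each of the three hyperplanes (together with the derivative estimates on $a$) gives $|B|\lesssim a(\min(|\xi_i|,|\xi_{jk}|))/|\xi_1\xi_2\xi_3\xi_4|$. Adding $A$ and $B$ yields the pointwise bound on $\sigma_4$.

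For the energy bound $|\Lambda_4(\sigma_4)|\lesssim |E_2(u)|^2$, I would expand via Littlewood--Paley into dyadic blocks $\xi_i\sim\lambda_i$. Under $\sum\xi_i=0$ the two largest frequencies must be comparable, so I order $\lambda_1\leq\lambda_2\leq\lambda_3\approx\lambda_4$ and apply the pointwise bound on $\sigma_4$. Then, as in the proof of Proposition~\ref{e3e}, I would place $u_{\lambda_1}$ and $u_{\lambda_2}$ in $L^\infty$ by Bernstein (cost $\lambda_1^{1/2}\lambda_2^{1/2}$) and $u_{\lambda_3}, u_{\lambda_4}$ in $L^2$, and rewrite each $\|u_\lambda\|_{L^2}$ as $E_2(u_\lambda)^{1/2}/a(\lambda)^{1/2}$. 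This produces a dyadic coefficient of the schematic form $a(\lambda_1)^{1/2}/(\lambda_1^{1/2}\lambda_2^{1/2}\lambda_3\lambda_4\,a(\lambda_2)^{1/2}a(\lambda_3)^{1/2}a(\lambda_4)^{1/2})$, which, in the regularity range $s\geq-\frac{4}{5}$ where $a(\lambda)\sim\lambda^{2s}$, is summable after pairing by Cauchy--Schwarz against $\prod_i E_2(u_{\lambda_i})^{1/2}$.

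The main obstacle is the analysis of $B$: extracting the $\Delta_4$-divisibility of the bracket \emph{and} tracking which frequency governs the remaining size, so that the improved factor $a(\min(|\xi_i|,|\xi_{jk}|))$ appears rather than just $a(\min|\xi_i|)$. The configurations where some pair-sum $|\xi_{jk}|$ is far smaller than every $|\xi_i|$ (high-high interactions producing low-frequency output) are precisely those driving the need for this sharper bound and will be the most demanding to verify when closing the dyadic sum in the energy estimate.
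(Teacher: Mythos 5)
Your treatment of the pointwise bound on $\sigma_4$ is sound and close in spirit to what the paper does: the paper likewise starts from the explicit formula (\ref{m4}), and in the mixed case $\{\mu,\mu,\lambda,\alpha\}$ it splits $\sigma_4$ into a piece bounded by $a(\mu)/(\alpha\lambda\mu^2)$ and a piece bounded by $a(\alpha)/(\lambda^2\mu^2)$; your divisibility/Taylor argument for the bracket $\sum a(\xi_i)/\xi_i$ (which indeed vanishes on each hyperplane $\xi_{jk}=0$) is the standard route to Proposition~\ref{m4bound} and is acceptable.

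The genuine gap is in the energy estimate, and it sits exactly in the configurations you flag at the end but do not resolve. Your scheme places the two lowest \emph{input} frequencies in $L^\infty$, which costs $\lambda_1^{1/2}\lambda_2^{1/2}$. In the high-high-to-low configurations $\{\xi_1,\xi_2,\xi_3,\xi_4\}=\{\mu,\mu,\mu,\mu\}$ or $\{\mu,\mu,\lambda,\lambda\}$ with $|\xi_{12}|\ll\lambda_{\min}$, the relevant weight is $a(\xi_{12})$, not $a(\lambda_1)$, and your coefficient becomes (say in the four-$\mu$ case)
\[
\frac{a(\xi_{12})\,\mu}{\mu^4}\prod_i\|u_\mu\|_{L^2}\ \sim\ a(\xi_{12})\,\mu^{-4s-3}\,E_2(u_\mu)^2 ,
\]
and $\mu^{-4s-3}$ is \emph{unbounded} for $s<-\frac34$, which is precisely the range of interest; the same computation in the $\{\mu,\mu,\lambda,\lambda\}$ case gives $\lambda^{-2s-1}\mu^{-2s-2}$, again not summable below $-\frac34$. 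The missing idea is to exploit that the pairs $u_{\xi_1}u_{\xi_2}$ and $u_{\xi_3}u_{\xi_4}$ are each frequency-localized at the \emph{output} frequency $|\xi_{12}|=|\xi_{34}|$, so Bernstein applied to the products gives
\[
\Bigl|\int u_{\mu}u_{\mu}\,\ul\,\ul\,dx\Bigr|\ \lesssim\ \|P_{\approx\xi_{12}}(u_\mu u_\mu)\|_{L^2}\,\|P_{\approx\xi_{12}}(\ul\ul)\|_{L^2}\ \lesssim\ |\xi_{12}|\,\|u_\mu\|_{L^2}^2\|\ul\|_{L^2}^2 ,
\]
i.e.\ a full factor $|\xi_{12}|$ in place of $\lambda_1^{1/2}\lambda_2^{1/2}$. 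This is what the paper uses: the dangerous weight then appears only in the combination $a(\xi_{12})|\xi_{12}|$, which is bounded and dyadically summable since $2s+1<0$, and the remaining coefficient $(a(\lambda)\lambda^2)^{-1}(a(\mu)\mu^2)^{-1}$ is summable for all $s>-1$. Without this replacement your dyadic sum does not close, so the second inequality in (\ref{boundform4}) is not established by the argument as written.
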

\begin{proof}
We look at $\Lambda_4(\sigma_4)$, expand it into dyadic frequency
components, since $\xi_i$ are symmetric, we can assume
$\xi_1\geq\xi_2\geq\xi_3\geq\xi_4$

(1) $\{\xi_1,\xi_2,\xi_3,\xi_4\}=\{\mu,\mu,\lambda,\lambda\},
\mu\gg\lambda$. Then  we have
$\min(\xi_i,\xi_{ij})=\xi_{12}\lesssim\lambda$ and
$|\sigma_4|\lesssim\frac{a(\xi_{12})}{\lambda^2\mu^2}$.
In this case, we can bound $\Lambda_4(\sigma_4)$   by \ben && a(\xi_{12})\lambda^{-2}\mu^{-2}\int \um\um\ul\ul dx\\
&\lesssim& a(\xi_{12})|\xi_{34}|\lambda^{-2}\mu^{-2}\|\um\|_{L^2}\|\um\|_{L^2}\|\ul\|_{L^2}\|\ul\|_{L^2}\\
&\lesssim&
a(\xi_{12})|\xi_{12}|(a(\mu)\mu^2)^{-1}(a(\lambda)\lambda^{2})^{-1}E_2(\um)E_2(\ul).
\een Here notice that  $a(x)x$ is bounded and we can sum up the
frequencies.

(2) $\{\xi_1,\xi_2,\xi_3,\xi_4\}=\{\mu,\mu,\lambda,\alpha\},
\mu\gg\lambda\gg\alpha$. In this case, we have
$\min(\xi_i,\xi_{ij})=\xi_4$, but we need attention with the
estimate here. In fact, with the expression for $M_4$(\ref{m4}),
we can separate the expression of $\sigma_4$ into two parts.

One term looks like
\[-\frac{1}{108}\frac{1}{\xi_1\xi_2\xi_3\xi_4}[a(\xi_1)+a(\xi_2)-a(\xi_{13})-a(\xi_{14})]+\frac{1}{36\Delta_4}[\frac{a(\xi_1)}{\xi_1} +\frac{a(\xi_2)}{\xi_2}]\]
and it is bounded by $\frac{a(\mu)}{\alpha\lambda\mu^2}$.\\
And the other term looks like(if we ignore the constant
$-\frac{1}{108}$ ), \ben
&&\frac{a(\xi_3)+a(\xi_4)-a(\xi_{12})}{\xi_1\xi_2\xi_3\xi_4}-\frac{1}{\xi_{12}\xi_{13}\xi_{14}}[\frac{a(\xi_3)}{\xi_3}+\frac{a(\xi_4)}{\xi_4}]\\
&=&\frac{a(\xi_3)\xi_3\xi_4\xi_{12}+a(\xi_3)\xi_1\xi_2\xi_3+a(\xi_4)\xi_3\xi_4\xi_{12}+a(\xi_4)\xi_1\xi_2\xi_4-a(\xi_{12})\xi_{12}\xi_{13}\xi_{14}}{\xi_1\xi_2\xi_3\xi_4\xi_{12}\xi_{13}\xi_{14}}
\een So it is bounded by $\frac{a(\alpha)}{\lambda^2\mu^2}$. Now
we can bound $\Lambda_4(\sigma_4)$   by \ben
&&a(\alpha)\lambda^{-2}\mu^{-2}\int \um\um\ul\ua dx\\
&\lesssim&a(\alpha)\lambda^{-2}\mu^{-2}\lambda^{\frac{1}{2}}\alpha^{\frac{1}{2}}\|\um\|_{L^2}\|\um\|_{L^2}\|\ul\|_{L^2}\|\ua\|_{L^2}\\
&\lesssim&(a(\alpha)\alpha)^{\frac{1}{2}}(a(\lambda)\lambda^3)^{-\frac{1}{2}}(a(\mu)\mu^2)^{-1}E_2(\um)E_2(\ul)^{\frac{1}{2}}E_2(\ua)^{\frac{1}{2}}.
\een

(3) $\{\xi_1,\xi_2,\xi_3,\xi_4\}=\{\mu,\mu,\mu,\lambda\},
\mu\gg\lambda$. Here $\min(\xi_i,\xi_{ij})=\lambda$, we  can do
same estimate as in  previous case and get
$|\sigma_4|\lesssim\frac{a(\lambda)}{\mu^4}$, we need bound the
expression \ben
&&a(\lambda)\mu^{-4}\int \um\um\um\ul dx\\
&\lesssim&a(\lambda)\mu^{-4}\lambda^{\frac{1}{2}}\mu^{\frac{1}{2}}\|\um\|_{L^2}\|\um\|_{L^2}\|\ul\|_{L^2}\|\ua\|_{L^2}\\
&\lesssim&(a(\lambda)\lambda)^{\frac{1}{2}}(a(\mu)\mu^2)^{-1}(a(\mu)\mu^3)^{-\frac{1}{2}}
E_2(\um)^{\frac{3}{2}}E_2(\ul)^{\frac{1}{2}}. \een

(4) $\{\xi_1,\xi_2,\xi_3,\xi_4\}=\{\mu,\mu,\mu,\mu\},
\min(\xi_i,\xi_{ij})=\xi_{ij}$. For convenience, suppose it is
$\xi_{12}$, then we have
$|\sigma_4|\lesssim\frac{a(\xi_{12})}{\mu^4}$. And we can bound
$\Lambda_4(\sigma_4)$ by \ben a(\xi_{12})\mu^{-4}\int \um\um\um\um
dx \lesssim a(\xi_{12})|\xi_{12}|(a(\mu)\mu^2)^{-2}E_2(\um)^2.
\een

In all the cases above, we can sum up the frequency and get
(\ref{boundform4}).
\end{proof}
\begin{remark} From the estimate in the proof, we see that
actually we have slightly better bound for $M_4$ than
Proposition~\ref{m4bound}  in the following two cases
\begin{enumerate} \item
$\{\xi_1,\xi_2,\xi_3,\xi_4\}=\{\mu,\mu,\lambda,\alpha\} ,
\alpha\ll\lambda\ll\mu$, $|M_4|\lsm\frac{a(\alpha)}{\lambda}$,\\
\item $\{\xi_1,\xi_2,\xi_3,\xi_4\}=\{\mu,\mu,\mu,\lambda\} ,
 \lambda\ll\mu$, $|M_4|\lsm\frac{a(\lambda)}{\mu}$.\\
\end{enumerate}
\end{remark}
%
%
%
\begin{prop}\label{m4est} We have the error estimate when $s\geq \so$
\[|\int_0^1 \Lambda_4(M_4)  dt|\lsm \|u\|_{\xx}^4(1+\|u\|_{\xx}+\|u\|^2_{\xx})\]
\end{prop}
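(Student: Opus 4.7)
The plan is to write $\Lambda_4(M_4)$ as an integral in spacetime, decompose the four inputs into Littlewood--Paley pieces, and pair them into two bilinear factors to which the $L^2_{t,x}$ estimates from Propositions~\ref{biliinearprop} and~\ref{L2bilinear} apply. Concretely, fixing dyadic sizes $\mu_1\geq \mu_2\geq \mu_3\geq \mu_4$ for $|\xi_1|,\dots,|\xi_4|$, the identity $\Delta_4=3\xi_{12}\xi_{13}\xi_{14}$ lets us view $M_4$ (via Proposition~\ref{m4bound} and the refined bounds in the remark following it) as the symbol of a bilinear operator with one extra spatial derivative distributed onto one of the two pairs. Then
\[
\int_0^1 \Lambda_4(M_4)\,dt \;\lesssim\; \|P_{\mu_1}u\cdot P_{\mu_2}u\|_{L^2_{t,x}}\;\|D^{\theta}(P_{\mu_3}u\cdot P_{\mu_4}u)\|_{L^2_{t,x}}
\]
with a multiplier weight determined by $M_4$, after which the two $L^2_{t,x}$ pieces are estimated by the bilinear machinery of Section~\ref{sec3}.

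The constraint $\xi_1+\xi_2+\xi_3+\xi_4=0$ forces the two largest frequencies to be comparable, so the relevant configurations reduce (after symmetrization) to the four already appearing in the proof of Proposition~\ref{boundform4}: (HHHH) $\mu,\mu,\mu,\mu$, (HHHL) $\mu,\mu,\mu,\lambda$, (HHLL) $\mu,\mu,\lambda,\lambda$, and (HHLM) $\mu,\mu,\lambda,\alpha$, in each case combined with a choice of smallest $|\xi_{ij}|$. Pair the two high-frequency factors together (so the high-high-to-low bilinear estimate (\ref{L2XXHHL})/(\ref{l2hmodesamefrequency}) applies, with an output frequency determined by $\xi_{12}$) and the remaining two factors together (using (\ref{L2XX}), (\ref{sumupl2}), or (\ref{l2xhighs}) depending on the regime). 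The derivative gain in the $M_4$ bound from the remark after Proposition~\ref{m4bound} exactly compensates the derivative loss one would see in (HHHL) and (HHLM), so in each case the symbol factor, combined with the two $L^2_{t,x}$ bilinear estimates and the relation $a(\mu)\mu^{2s}\lesssim 1$, produces a product of four $\xx$ norms with dyadic weights that sum.

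Two separate issues need care. First, the time integral is over $[0,1]$ while the $\xx$ norm controls dyadic pieces only on intervals of length $\mu_i^{4s+3}\ll 1$; one handles this as in Section~\ref{sec4}, partitioning $[0,1]$ into sub-intervals adapted to the smallest frequency and using Cauchy--Schwarz in the interval index, the $l^2$ summability of the $\xx$ norm, and if necessary the better $l^p$ summability coming from the high-modulation structure (as in case~1.2 of Section~\ref{sec4}). Second, in the resonant (HHHH) case and in the $\xi_{12}$-smallest subcase of (HHLM), one factor may lie in the $\B$ component of $S$ at low frequency and high modulation; for that piece we do not have a direct bilinear $L^2$ bound with a factor in $Z$, exactly the configuration that forced the reiteration in Lemma~\ref{backflow}. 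The fix is the same: substitute the equation into $Q_{\sigma\gtrsim\lambda^3}v_\lambda$ via Lemma~\ref{backflow}, so that the troublesome bilinear piece becomes a trilinear expression controlled by~(\ref{Rest}) plus the two explicit terms $M_1,M_2$, raising the order of the estimate to $\|u\|_{\xx}^5$ and $\|u\|_{\xx}^6$ and producing the $(1+\|u\|_{\xx}+\|u\|_{\xx}^2)$ factor on the right.

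The hardest step is the bookkeeping in the (HHLM) case when the smallest $|\xi_{ij}|$ is neither between the two highs nor between the two lows: here the pairing that matches the bilinear $L^2$ estimates does not a priori line up with the factorization of $\Delta_4=3\xi_{12}\xi_{13}\xi_{14}$, so one must shift the derivative across the pairing, using the refined bound $|M_4|\lesssim a(\alpha)/\lambda$ from part (1) of the remark. Tracking the exponents carefully, all dyadic sums close precisely at $s=-\tfrac{4}{5}$, which is the threshold built into every calculation in Sections~\ref{sec3}--\ref{sec4} and is exactly where the summation of the $l^1$-in-$\alpha$ piece (arising from the two low-frequency factors) fails to close for $s<-\tfrac{4}{5}$.
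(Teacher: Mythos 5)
Your high-level strategy is the one the paper uses: dyadic decomposition of the four inputs, the symbol bounds on $M_4$ from Proposition~\ref{m4bound} and the remark after it, reduction to $L^2_{t,x}$ bilinear estimates, interval summation with the $\mu^{-4s-3}$ count, and the trilinear correction (equivalently, the reiteration behind (\ref{l2xhighs})/(\ref{sumupl2})) as the source of the $(1+\|u\|_{\xx}+\|u\|^2_{\xx})$ factor. However, there are two concrete gaps. First, your blanket prescription of pairing the two high-frequency factors together and the two remaining factors together does not cover the $\{\mu,\mu,\mu,\lambda\}$ configuration when the high modulation $\sigma\gtrsim\mu^3$ forced by (\ref{himodem4}) falls on the low-frequency factor $u_\lambda$. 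Your pairing then produces $u_\mu u_\lambda$ with $u_\lambda$ of high modulation, which is exactly the high-frequency--low-modulation against low-frequency--high-modulation configuration with no bilinear $L^2$ bound, and reiterating via Lemma~\ref{backflow} at that point is not what closes the case. The paper instead abandons the two-bilinear-pairings structure there: it estimates the \emph{triple} product $u_\mu u_\mu u_\mu$ in $L^2_{t,x}$ (estimate (\ref{triple})) against $Q_{\sigma\gtrsim\mu^3}u_\lambda$ alone in $L^2_{t,x}$ via (\ref{l2Xhighmode})--(\ref{l2Shighmode}); this is the case that produces the constraint $s\geq-\frac{21}{26}$ and it is simply absent from your sketch. (Your identification of the troublesome $\B\times Z$ interaction as arising in the HHHH and HHLM cases is also off: in HHHH all four factors share the same dyadic frequency, so that configuration does not occur, and in HHLM the paper closes with (\ref{sumupl2}) alone.)

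Second, the threshold analysis is misattributed. The binding constraint $s\geq-\frac{4}{5}$ does not come from an $l^1$-in-$\alpha$ summation over the low-frequency factors; it comes from the high-frequency exponent $\mu^{-10s-8}=(\mu^{-3s-\frac{5}{2}})^2\mu^{-4s-3}$ in the $\{\mu,\mu,\lambda,\lambda\}$ and $\{\mu,\mu,\lambda,\alpha\}$ cases, i.e.\ from applying the trilinear-corrected bound (\ref{sumupl2}) to each of the two \emph{high-low} pairs (the paper pairs $u_\mu u_\lambda$ with $u_\mu u_\lambda$, not $u_\mu u_\mu$ with $u_\lambda u_\lambda$) and multiplying by the number of time intervals. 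The low-frequency sums in those cases carry negative powers and are harmless. If you insist on the high-high/low-low pairing in HHLL and HHLM you would need to redo the exponent count with (\ref{L2XXHHL}) and its $S$-component analogues, and verify it still closes at $-\frac{4}{5}$; as written, the sketch does not establish this.
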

\begin{proof}
As before, we expand the error term $\Lambda_4(M_4)$ in the dyadic
frequency component and discuss in each cases. Since $u\in \xx$,
we still decompose each piece as $\ul=u_{\lambda,1}+u_{\lambda,2},
\hspace{0.07in}u_{\lambda,1}\in X^1[\ill],
\hspace{0.07in}u_{\lambda,2}\in S[\ill].$ We abuse the notation
and still use $\ul$ to represent any of them. We   assume
$\xi_1\geq\xi_2\geq\xi_3\geq\xi_4$.

One thing to notice the the high modulation relation. Since
 \ben \int_0^1 \Lambda_4(M_4)  dt= \int_{\Sigma} M_4 \widetilde{u}_1\widetilde{u}_2\widetilde{u}_3\widetilde{u}_4\,d\xi\,d\tau.\een
\ben \Sigma=\{\xi_1+\xi_2+\xi_3+\xi_4=0,
 \tau_1+\tau_2+\tau_3+\tau_4=0\}.\een
 We have
 \beq
(\tau_1-\xi_1^3)+(\tau_2-\xi_2^3)+(\tau_3-\xi_3^3)+(\tau_4-\xi_4^3)=-\Delta_4=-3\xii{12}\xii{13}\xii{14}.
 \eeq
Hence we get the high modulation \beq
\label{himodem4}\sM=\max{\{|\tau_1-\xi_1^3|, |\tau_2-\xi_2^3|,
|\tau_3-\xi_3^3|, |\tau_4-\xi_4^3|\}}\gtrsim
|\xii{12}\xii{13}\xii{14}|. \eeq

\noindent\textbf{(1)}
$\{\xi_1,\xi_2,\xi_3,\xi_4\}=\{\mu,\mu,\lambda,\lambda\},
\mu\gg\lambda$. Then  we have
$\min(\xi_i,\xi_{ij})=\xi_{12}\lesssim\lambda$  and $|M_4|\lesssim
|\frac{\ax{12}\xii{12}}{\lambda^2}|$, also notice function $a(x)x$
is bounded. Let us use the crude bilinear estimate
(\ref{sumupl2}), and also we need cut the time interval $[0,1]$
into smaller scale of size $\mu^{4s+3}$.
 \ben
&&\int_0^1\Lambda_4(M_4)dt\\
&\lsm&|\frac{\ax{12}\xii{12}}{\lambda^2}|
(\max{\{\mu^{-1-s}\lambda^{-s},\mu^{-3s-\frac{5}{2}}\}})^2\mu^{-4s-3}\|\um\|_{\xx}^2(\|u\|_{\xx}+\|u\|^2_{\xx})^2\\
&\lsm&\max{\{\mu^{-6s-5}\lambda^{-2s-2},\mu^{-10s-8}\lambda^{-2}\}}
 \|\um\|^2_{\xx}\sum_{k=2}^4\|u\|^k_{\xx}.
 \een
It is summable when $s\geq-\frac{4}{5}$.

\noindent\textbf{(2)}
$\{\xi_1,\xi_2,\xi_3,\xi_4\}=\{\mu,\mu,\lambda,\alpha\},
\mu\gg\lambda\gg\alpha$. $|M_4|\lesssim\frac{a(\alpha)}{\lambda}$.
We estimate it in exactly the same way as (1).
 \ben
&&\int_0^1\Lambda_4(M_4)dt\\
&\lsm&\frac{a(\alpha)}{\lambda}\max{\{\mu^{-1-s}\lambda^{-s},\mu^{-3s-\frac{5}{2}}\}}
\max{\{\mu^{-1-s}\alpha^{-s},\mu^{-3s-\frac{5}{2}}\}}\mu^{-4s-3}
\\
&&\times\|\um\|_{\xx}^2(\|u\|_{\xx}+\|u\|^2_{\xx})^2.
 \een
By computing the exponents, we can sum up the frequencies when
when $s\geq-\frac{4}{5}$.

\noindent \textbf{(3)}
$\{\xi_1,\xi_2,\xi_3,\xi_4\}=\{\mu,\mu,\mu,\lambda\},
\mu\gg\lambda$, here $\min(\xi_i,\xi_{ij})=\lambda$,
$\sm\gtrsim\mu^3$.
\newline \textbf{Case 1} When at least one of $\um$ have high
modulation, here we cut the interval to size $\mu^{4s+3}$ and use
bilinear  on $(\Qm\um)\um$ (\ref{l2hmodesamefrequency}) and
$\um\ul$, we see that we get the bound \ben
&&\int_0^1\int_{\mathbb{R}}\frac{a(\lambda)}{\mu}(\Qm\um)\um\um\ul
dx dt
\\
&\lsm&
\frac{a(\lambda)}{\mu}\mu^{-3s-\frac{5}{2}}\max(\mu^{-1-s}\lambda^{-s},\mu^{-3s-\frac{5}{2}})\mu^{-4s-3}\|\um\|^3_{\xx}(\|u\|_{\xx}+\|u\|_{\xx}^2)\\
&\lsm&\mu^{-10s-9}\|\um\|^3_{\xx}(\|u\|_{\xx}+\|u\|_{\xx}^2).
 \een
So it is summable for $s\geq -\frac{9}{10}$
\newline \textbf{Case 2} When the high modulation fall on $\ul$,
this is the hard case, we use the $L^2$ on $\Qm\ul$, and $L^2$ on
the product $\um\um\um$. \beq\label{l2Xhighmode}\|\el
Q_{\sigma\gtrsim\mu^3}\ul\|_{\txt}\lesssim
\lambda^{-3s-\frac{3}{2}}\mu^{-3}\|\ul\|_{X_\lambda[\ill]},\eeq
\beq\label{l2Shighmode}\|\el
Q_{\sigma\gtrsim\mu^3}\ul\|_{\txt}\lesssim
\lambda^{3+4s}\mu^{-6s-6}\|\ul\|_{\C[\ill]},\eeq
\beq\label{triple}\|\eta_{\mu}\um\um\um\|_{\txt}\lesssim
\mu^{-\frac{1}{2}-3s}\|\um\|_{X^s}^3.\eeq The third one is proved
by discussing $\um\in X^1$ or $S$, and notice that none of them
has high modulation. Then we get \ben
&&\int_0^1\int_{\mathbb{R}}\frac{a(\lambda)}{\mu}\um\um\um  Q_{\sigma\gtrsim\mu^3}\ul dtdx\\
&\lsm&\frac{a(\lambda)}{\mu}\max{\{\lambda^{-3s-\frac{3}{2}}\mu^{-3},\lambda^{3+4s}\mu^{-6s-6}\}}\mu^{-\frac{1}{2}-3s}\mu^{-4s-3}\|\um\|_{\xx}^3\|\vl\|_{\xx}
\een And we can sum up frequencies when $s\geq-\frac{21}{26}$.

\noindent\textbf{(4)}
$\{\xi_1,\xi_2,\xi_3,\xi_4\}=\{\mu,\mu,\mu,\mu\}$ here we need to
discuss the size of $\xi_{ij}$.
\[\xii{12}+\xii{13}+\xii{14}=2\xii{1}\]so at least one of them
is of size $\mu$
\newline \textbf{Case 1:} When $\xii{ij}\gtrsim \mu$, then we have
$|M_4|\lsm\frac{a(\mu)}{\mu}$, and we have the high modulation
factor $\sm\gtrsim\mu^3$, so we use bilinear on $(\Qm\um)\um$, and
$L^2$ for each of $\um\um$.

Notice the (8,4) is Strichartz pair and using the size of interval
we get \beq \|\eta_{\mu}\um\um\|_{\txt}\lesssim
\mu^{\frac{1}{2}-s}\|\um\|_{X^1_{\mu}[\imm]}^2.\eeq

From (\ref{L2SSX}) we have \beq
\label{l2exs}\|\eta_{\mu}\um\um\|_{\txt}\lesssim\mu^{-\frac{3}{2}-2s}
\|\um\|_{X^1_{\mu}[\imm]}\|\um\|_{S[\imm]}.\eeq

From (\ref{L2SSS}) and (\ref{l2ulbvaz}) we get \beq\label{l2esss}
\|\eta_{\mu}\um\um\|_{\txt}\lesssim\mu^{-1-s}
\|\um\|_{S[\imm]}\|\um\|_{S[\imm]}.\eeq

 \ben &&
\int_0^1\int_{\mathbb{R}}\frac{a(\mu)}{\mu}(\Qm\um)\um \um\um
dx dt\\
&\lsm&\frac{a(\mu)}{\mu}\mu^{-3s-\frac{5}{2}}\mu^{\frac{1}{2}-s}\mu^{-4s-3}
\|\um\|^4_{X^s}
\\
&\lsm&a(\mu)\mu^{-8s-6}\|\um\|^4_{X^s} \een so it is summable when
$s\geq -1$.
\newline
\textbf{Case 2:} When two of $\xii{ij}$ is big, one is small,
let's assume $\xii{13}\ll\mu, \xii{12},\xii{14}\gtrsim\mu$, we
have $|M_4|\lsm|\frac{a(\xii{13})\xii{13}}{\mu^2}|$. Then we can
easily calculate that
\[(\xi_1-\xi_2)+(\xi_1-\xi_4)-(\xi_1+\xi_3)=2\xi_1\] since
$\xii{13}\ll\mu$, we must have at least one of $\xi_1-\xi_2$ or
$\xi_1-\xi_4$ be of size $\mu$, with out loss of generality, we
assume $|\xi_1-\xi_2|\gtrsim \mu$, so we have separation of
frequency, i.e \[|\xi_1-\xi_2|\approx\mu, |\xi_1+\xi_2|\approx
\mu\] and we can also prove that
\[|\xi_3+\xi_4|=|\xi_1+\xi_2|\approx \mu,
|\xi_3-\xi_4|=|\xi_3+\xi_1-(\xi_1+\xi_4)|\approx\mu\]
Now we have the bilinear estimate of two $\um$'s which have
frequency separation. \beq\label{l2exxx}
\|\eta_{\mu}\um\um\|_{\txt}\lesssim\mu^{-1-2s}\|\um\|_{\xmm}. \eeq
Together with (\ref{l2exs}) and (\ref{l2esss}), we get \ben &&
\int_0^1\int_{\mathbb{R}}|\frac{a(\xii{12})\xii{12}}{\mu^2}|\um\um
\um\um dx
dt\\
&\lsm&
|\frac{a(\xii{12})\xii{12}}{\mu^2}|(\mu^{-1-2s})^2\mu^{-4s-3}
\|\um\|^4_{X^s}
\\
&\lsm&\mu^{-8s-7} \|\um\|^4_{X^s}.\een so we can sum up for $s\geq
-\frac{7}{8}$.

\noindent\textbf{Case 3:} When one of $\xii{1j}$ is big, the other
two small. We can assume $\xii{12}\leq \xii{13}\ll\mu$,
$\xii{14}\gtrsim\mu$. In this case, we don't have frequency
separation.
$|M_4|\lsm|\frac{a(\xi_{12})\xi_{12}\xi_{13}}{\mu^3}|$.

But we still have (\ref{L2XXHHL}), so together with (\ref{l2exs})
and (\ref{l2esss}), we get \ben
&&\int_0^1\int{\mathbb{R}}|\frac{a(\xi_{12})\xi_{12}\xi_{13}}{\mu^3}|\um\um\um\um
dx dt\\
&\lsm&
|\frac{a(\xi_{12})\xi_{12}\xi_{13}}{\mu^3}|(|\xi_{13}|^{-\frac{1}{2}}\mu^{-\frac{1}{2}})^2\mu^{-4s}\mu^{-4s-3}\|\um\|^4_{X^s}\\
&\lsm&\mu^{-8s-7}\|\um\|^4_{X^s} \een so it is summable when $s
\geq -\frac{7}{8}$.
\end{proof}

\section{Local energy decay}
\label{sec6}
 Let  $\chi(x)$  be a  positive, rapidly decaying function,  with Fourier transform supported in [-1, 1]. Let $a$
be as in the previous section. We define the indefinite quadratic
form
\beq\widetilde{E}_2(u)=\sum_{\lambda}\frac{1}{2}\int(\phi_\lambda
\ta(D)+ \ta(D)\phi_\lambda)\ul \ul
dx.\label{definelocalenergy}\eeq Here $\phi_\lambda$ is an odd
smooth function whose derivative has the form
$\phi_\lambda'(x)=\psi_\lambda(x)^2$,
$\psi_\lambda(x)=\lambda^{-2s-\frac{5}{2}}\chi(\frac{x}{\lambda^{4s+5}})$.
We will abuse the notation a bit,  and  (\ref{definelocalenergy})
\beq\label{redefinelocalenergy}\widetilde{E}_2(u)=
\frac{1}{2}\int(\phi \ta(D)+ \ta(D)\phi )u u dx,\eeq with the
understanding that it is really defined on each dyadic pieces, and
$\phi=\phi_\lambda$ on each piece.

 Then we have the calculation
\beq\label{derivativeoflocalenergy}\frac{d}{dt}\widetilde{E}_2(u)=\tr_2(u)+\tr_3(u),\eeq
where
\[\tr_2(u)=\langle(\ta(D)\phi_x+\phi_x\ta(D))u_x,u_x\rangle+\langle(\ta(D)\phi_{xxx}+\phi_{xxx}\ta(D))u,u\rangle,\]
\[\tr_3(u)=c Re\langle (\ta(D)\phi+\phi\ta(D))u, (u^2)_x\rangle.\]
We will see in the following propositions that $\tr_2$ can be used
to measure local energy.
\begin{prop}\label{EE}
Let $a\in S^s_{\epsilon}$, $\phi$  defined as above, then we have
the fixed time bound
\[|\widetilde{E}_2(u)|\lesssim E_2(u),\]
\[|\langle(\ta(D)\phi_{xxx}+\phi_{xxx}\ta(D))u,u\rangle|\lesssim E_2(u).\]
\end{prop}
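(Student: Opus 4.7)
The plan is to symmetrize each of the two quadratic forms using the self-adjointness of $a(D)$ and the reality of $\phi_\lambda$, and then to control each dyadic piece via the frequency localization of $u_\lambda$ together with the uniform boundedness of $\phi_\lambda$ (respectively a controlled $L^\infty$ bound for $\phi_\lambda'''$). Since $\widetilde{E}_2$ is really a sum over $\lambda$, with $\phi$ replaced by $\phi_\lambda$ on the piece $u_\lambda$, it suffices to bound the $\lambda$-th summand by $a(\lambda)\|u_\lambda\|_{L^2}^2\approx E_2(u_\lambda)$ and then sum.

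First I would observe that, for real $\phi$ and real symbol $a$, the operators $\phi\,a(D)$ and $a(D)\,\phi$ are mutual adjoints, so that
\[
\tfrac{1}{2}\langle(\phi\,a(D)+a(D)\,\phi)u_\lambda,u_\lambda\rangle=\int\phi_\lambda\,u_\lambda\cdot a(D)u_\lambda\,dx,
\]
and the same identity holds with $\phi_{xxx}$ in place of $\phi$. By H\"older this is bounded by $\|\phi_\lambda\|_{L^\infty}\|u_\lambda\|_{L^2}\|a(D)u_\lambda\|_{L^2}$. The symbol regularity in Definition \ref{Sa}(i), together with the frequency localization of $u_\lambda$, yields $\|a(D)u_\lambda\|_{L^2}\lesssim a(\lambda)\|u_\lambda\|_{L^2}$ (Plancherel plus $a(\xi)\approx a(\lambda)$ on the support of $\widehat{u_\lambda}$), so both estimates reduce to controlling an $L^\infty$ norm of $\phi_\lambda$ or $\phi_\lambda'''$.

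These $L^\infty$ bounds are then straightforward from the scaling. A direct integration gives
\[
\phi_\lambda(x)=\int_0^{x/\lambda^{4s+5}}\chi^2(u)\,du,
\]
so $\|\phi_\lambda\|_{L^\infty}\leq\|\chi\|_{L^2}^2\lesssim 1$ uniformly in $\lambda$, yielding the first estimate. Differentiating $\phi_\lambda'(x)=\lambda^{-4s-5}\chi^2(x/\lambda^{4s+5})$ twice more produces $\|\phi_\lambda'''\|_{L^\infty}\lesssim\lambda^{-3(4s+5)}=\lambda^{-12s-15}$, which is $\lesssim 1$ for $\lambda\geq 1$ as soon as $s\geq -5/4$, well within the range $s\geq -4/5$ of the paper. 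Summing $a(\lambda)\|u_\lambda\|_{L^2}^2$ over $\lambda$ reproduces $E_2(u)$ and finishes both bounds. In short, there is no real obstacle: the symmetrization collapses both operator products into a single bilinear integral, and the pointwise scaling of $\phi_\lambda$ handles the rest, with no commutator calculus needed.
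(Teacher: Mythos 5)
Your proof is correct, but it proceeds by a different mechanism than the paper's. The paper disposes of both bounds in one line by conjugation: it writes $\langle \ta(D)\phi u,u\rangle=\langle(\ta(D)^{1/2}\phi\, \ta(D)^{-1/2})\,\ta(D)^{1/2}u,\ta(D)^{1/2}u\rangle$ and invokes the $L^2$-boundedness of $\ta(D)^{1/2}\phi\, \ta(D)^{-1/2}$, which rests on the fact that $\widehat{\phi}$ (and $\widehat{\phi_{xxx}}$) is compactly supported in a small interval while $a^{1/2}$ is slowly varying by Definition~\ref{Sa}(i) --- essentially a commutator/conjugation estimate that never uses the frequency localization of $u$. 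You instead exploit the dyadic structure of $\widetilde{E}_2$ directly: symmetrizing reduces each summand to the single integral $\int\phi_\lambda\,u_\lambda\,\ta(D)u_\lambda\,dx$, and then H\"older together with $\ta(\xi)\approx \ta(\lambda)$ on $\mathrm{supp}\,\widehat{u_\lambda}$ gives the bound $a(\lambda)\|u_\lambda\|_{L^2}^2$, with the scaling computations $\|\phi_\lambda\|_{L^\infty}\leq\|\chi\|_{L^2}^2$ and $\|\phi_\lambda'''\|_{L^\infty}\lesssim\lambda^{-3(4s+5)}\lesssim 1$ supplying the needed uniformity. Your route is more elementary (no operator conjugation, no use of the compact Fourier support of $\phi_\lambda$) but is tied to the frequency localization of the $u_\lambda$'s, whereas the paper's argument would bound the quadratic form on arbitrary $L^2$ inputs; for the purpose of Proposition~\ref{EE}, where $\widetilde{E}_2$ is by definition a sum over dyadic pieces, both are equally adequate.
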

\begin{proof} Since $\phi$ and $\phi_{xxx}$ are bounded and its fourier transform has compact support,
\[|\langle\ta(D)\phi u,u\rangle|=|\langle (\ta(D)^{1/2}\phi\ta(D)^{-1/2}) \ta(D)^{1/2}u, \ta(D)^{1/2}u\rangle|\lesssim E_2(u).\]
Other terms are proved similarly.
\end{proof}
\begin{prop}\label{localengy} We can use $R_2$ to bound the local energy
\beq\|\psi\ta(D)^{\frac{1}{2}}D u\|_{L^2_x}^2\lesssim \tr_2(u)+c
E_2(u).\label{boundr2}\eeq
\end{prop}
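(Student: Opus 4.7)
The strategy is dyadic: both $\widetilde{E}_2$ and $\tr_2$ are defined piece by piece in $\lambda$, so it suffices to prove the estimate on each Littlewood--Paley band and sum. Fix $\lambda$; by Proposition~\ref{EE}, the $\phi_{\lambda,xxx}$ summand of $\tr_2$ is already bounded by $E_2(u_\lambda)$, so the task reduces to extracting $\|\psi_\lambda\ta(D)^{1/2}Du_\lambda\|_{L^2}^2$ from the $\phi_{\lambda,x}$ summand.

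Using $\phi_{\lambda,x}=\psi_\lambda^2$, reality of $u$, and self-adjointness of both $\ta(D)$ and multiplication by $\psi_\lambda^2$, I would rewrite
\[\langle(\ta(D)\psi_\lambda^2+\psi_\lambda^2\ta(D))\partial_x u_\lambda,\partial_x u_\lambda\rangle = 2\langle\psi_\lambda\ta(D)\partial_x u_\lambda,\psi_\lambda\partial_x u_\lambda\rangle.\]
Factoring $\ta(D)=\ta(D)^{1/2}\ta(D)^{1/2}$ and commuting the inner $\ta(D)^{1/2}$ past $\psi_\lambda$ on either side reshapes the right side as $2\|\psi_\lambda\ta(D)^{1/2}\partial_x u_\lambda\|_{L^2}^2+\mathrm{Err}_\lambda$, where $\mathrm{Err}_\lambda$ is a bilinear expression in the commutator $[\psi_\lambda,\ta(D)^{1/2}]$ applied to $\psi_\lambda\partial_x u_\lambda$ and $\ta(D)^{1/2}\partial_x u_\lambda$. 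Since $\partial_x$ coincides with $D$ up to a bounded Fourier multiplier on the band $|\xi|\approx\lambda$, the main term matches $2\|\psi_\lambda\ta(D)^{1/2}Du_\lambda\|_{L^2}^2$ modulo one more harmless commutator.

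The main obstacle is controlling $\mathrm{Err}_\lambda$ by $E_2(u_\lambda)$. Pseudo-differential symbol calculus, applied on the frequency band $|\xi|\approx\lambda$ and using $\psi_\lambda(x)=\lambda^{-2s-5/2}\chi(x/\lambda^{4s+5})$, gives
\[\|[\psi_\lambda,\ta(D)^{1/2}]\|_{L^2\to L^2}\lesssim \|\psi_\lambda'\|_{L^\infty}\,\bigl|\partial_\xi a(\xi)^{1/2}\bigr|_{|\xi|\approx\lambda}\lesssim \lambda^{-6s-15/2}\cdot\lambda^{s-1}=\lambda^{-5s-17/2}.\]
Combined with $\|\ta(D)^{1/2}\partial_x u_\lambda\|_{L^2}\lesssim\lambda^{s+1}\|u_\lambda\|_{L^2}$ and $\|\psi_\lambda\partial_x u_\lambda\|_{L^2}\lesssim\lambda^{-2s-3/2}\|u_\lambda\|_{L^2}$, Cauchy--Schwarz yields $|\mathrm{Err}_\lambda|\lesssim\lambda^{-6s-9}\|u_\lambda\|_{L^2}^2\lesssim\lambda^{2s}\|u_\lambda\|_{L^2}^2\approx E_2(u_\lambda)$ as soon as $s\geq\so$. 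The specific exponents $-2s-\tfrac{5}{2}$ and $4s+5$ in the normalization of $\psi_\lambda$ are calibrated precisely so that each derivative landing on $\psi_\lambda$ is compensated by one $\xi$-derivative of $a^{1/2}$, making every commutator correction absorbable into $E_2(u)$. Summing on $\lambda$ then closes \eqref{boundr2}.
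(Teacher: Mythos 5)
Your proposal is correct and follows essentially the same route as the paper: write the $\phi_x$-term of $\tr_2$ as $2\|\psi\ta(D)^{1/2}Du\|_{L^2}^2$ plus commutator corrections, and absorb those corrections (together with the $\phi_{xxx}$-term, via Proposition~\ref{EE}) into $E_2(u)$. The only cosmetic difference is that the paper packages the error as the single double commutator $[\ta(D)^{1/2},[\ta(D)^{1/2},\psi^2]]$ and invokes a symbol-class bound showing it is dominated by $a$, whereas you bound the single commutators band by band with explicit operator norms; both yield the same conclusion.
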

\begin{proof} \[\langle(\ta(D)\phi_x+\phi_x\ta(D))u_x,u_x\rangle=2\|\psi(\ta(D)^{\frac{1}{2}}D)u\|_{L^2_x}^2+\langle r^{w}(x,D)u,u\rangle.\]
Here \[r^{w}(x,D)=[\ta(D)^{1/2},[\ta(D)^{1/2},\psi^2]],\] so its
symbol $r$ satisfy the estimate
\[\partial^{\alpha}_x\partial^{\beta}_{\xi}r(x,\xi)\lesssim \lag x\rag^{-N}(1+\xi)^{-\frac{\beta}{2}}\ta(\xi).\]
Hence
\[|\lag r^w(x,D)u,u\rag|\lesssim E_2(u).\]
Combine with previous proposition and the formula for $\tr_2$, we
get   estimate (\ref{boundr2}).
\end{proof}
Integrating (\ref{derivativeoflocalenergy}) and (\ref{boundr2}) on
time interval $[0,1]$, together with Proposition~\ref{EE} we get
\beq\int_0^1\|\psi\ta(D)^{\frac{1}{2}}D u\|_{L^2_x}^2dt \lsm
\|u\|_{\lh}^2 +|\int_0^1\tr_3(u)dt|.\eeq

Next,  we can rewrite $\tr_3$ in the Fourier space. Notice that
original definition of (\ref{definelocalenergy}) is on dyadic
pieces, so $\tr_3$ takes the following form
\[\tr_3(u)=2\int_{\mathbb{R}}\phi(x)e^{ix\xi}\int_{P_\xi}(\ta(\xi_1-\xi)+\ta(\xi_1))\chi(\xi)(\xi_{23})\hu(\xi_1)\hu(\xi_2)\hu(\xi_3)d\xi_id\xi dx,\]
\[P_\xi=\{\xi_1+\xi_2+\xi_3=\xi\}.\]
Here $\phi$ is actually $\phi_\lambda$, $\chi(\xi) $ is the
multiplier used to define projection $P_\lambda$.

Now we can symmetrize it, using the notation
$A(\xi_i)=(\ta(\xi_i-\xi)+\ta(\xi_i))\chi(\xi_i)$
\begin{eqnarray*}
\tr_3(u)&=&\int_{\mathbb{R}}\phi(x)e^{ix\xi}\int_{P_\xi}(\sum_{i=1}^3A(\xi_i))\xi\hu(\xi_1)\hu(\xi_2)\hu(\xi_3)d\xi_id\xi dx\\
&-&\int_{\mathbb{R}}\phi(x)e^{ix\xi}\int_{P_\xi}(\sum_{i=1}^3A(\xi_i)\xi_i)\hu(\xi_1)\hu(\xi_2)\hu(\xi_3)d\xi_id\xi
dx.
\end{eqnarray*}
To better estimate it, we use proposition~\ref{bc}, and rewrite
\beq\label{BCdecomposition}\sum_{i=1}^3A(\xi_i)\xi_i=B(\xi_1,
\xi_2, \xi_3)(\xi_1^3+\xi_2^3+\xi_3^3)+C(\xi_1, \xi_2,
\xi_3)(\xi_1+\xi_2+\xi_3) .\eeq So we split $\tr_3$ into
\[\tr_3(u)=\tilde{R}_{good,3}+\tr_{bad,3},\]
where $\tilde{R}_{good,3}$ and $\tr_{bad,3}$ take the following
form,
\[\tr_{good,3}=\int_{\mathbb{R}}\phi(x)e^{ix\xi}\int_{P_\xi}(\sum_{i=1}^3A(\xi_i)-C)\xi \hspace{0.04in}\hu(\xi_1)\hu(\xi_2)\hu(\xi_3)d\xi_id\xi dx,\]
\[\tr_{bad,3}=-\int_{\mathbb{R}}\phi(x)e^{ix\xi}\int_{P_\xi}(B(\xi_1,\xi_2,\xi_3)(\xi_1^3+\xi_2^3+\xi_3^3))\hu(\xi_1)\hu(\xi_2)\hu(\xi_3)d\xi_id\xi dx.\]
\begin{prop}
Let $a, \phi$ as before, then we have the estimate
\[|\int_0^1\tr_{good,3}(u)dt|\lesssim \sum_{k=3,4}\|u\|^k_{X^s\cap X^s_{le}}.\]
\end{prop}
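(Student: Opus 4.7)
The plan is to exploit the factor $\xi = \xi_1+\xi_2+\xi_3$ multiplying the symbol $\sum_i A(\xi_i) - C$ by integrating by parts against $e^{ix\xi}$, thereby transferring the derivative onto the spatial weight $\phi(x)$. Since $\phi_\lambda'(x) = \psi_\lambda(x)^2$ and $\psi_\lambda$ is precisely the $L^2$-normalized weight adapted to the local energy norm $X^s_{le}$ (supported on scale $\lambda^{4s+5}$ with $\|\psi_\lambda\|_{L^2}=O(1)$), this yields
\[
\tilde R_{good,3}(u) = i\int \psi_\lambda^2(x)\, e^{ix\xi}\int_{P_\xi}\bigl(\textstyle\sum_i A(\xi_i) - C\bigr)\hat u(\xi_1)\hat u(\xi_2)\hat u(\xi_3)\,d\xi_i\,d\xi\,dx,
\]
which we read as $i\langle \psi_\lambda^2,\,T_m(u,u,u)\rangle$ where $T_m$ is the trilinear Fourier multiplier with symbol $m=\sum_i A(\xi_i)-C$. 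The gain is that one of the three factors of $u$ is effectively multiplied by $\psi_\lambda$, hence absorbed into the $X^s_{le}$ norm at the cost of the expected $\lambda^s$ weight.

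Next I would bound the multiplier $m$ using Proposition \ref{bc}. Since $\sum A(\xi_i)\xi_i = B\Delta_3 + C\xi$ and $A$ inherits the symbol regularity of $a$, on a dyadic region with input frequencies $\alpha\le\lambda_1\le\lambda_2$ and output frequency $\sim\lambda$ (forced by the $\chi(\xi)$ factor in $A$), the symbol $m$ enjoys a bound of size $a$ evaluated at the lowest relevant frequency, with the same derivative estimates. After a Littlewood-Paley decomposition of $u$, it suffices to estimate, in each dyadic configuration,
\[
|m|\cdot\int_0^1\!\!\int \psi_\lambda^2(x)\, u_{\mu_1} u_{\mu_2} u_{\mu_3}\,dx\,dt
\]
subject to the frequency balance forcing one $\mu_j\sim\lambda$. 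For the easy configurations (high-low, or high-high with substantial separation from the output), placing $\psi_\lambda u_\lambda$ in $X^s_{le}$ and applying the bilinear $L^2$ estimates in Proposition \ref{biliinearprop} and Proposition \ref{L2bilinear} to the other two factors delivers the cubic contribution $\|u\|^3_{X^s\cap X^s_{le}}$, with dyadic summability governed by the threshold $s\ge -4/5$ exactly as in Proposition \ref{m4est}.

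The main obstacle is the resonant configuration where all three frequencies are comparable and no factor carries high modulation, so that the naive bilinear estimates fail. I will handle this by the same reiteration trick as in Case 3.3.2 of the bilinear analysis: apply Lemma \ref{backflow} to one factor, splitting it as $M_1+M_2+R$. The remainder $R$ obeys the $L^2_xL^\infty_t$ bound $\|R_\alpha\|\lesssim \|u\|^2_{X^s\cap X^s_{le}}$, which paired with $L^\infty_xL^2_t$ local smoothing bounds on the remaining factors and the $\psi_\lambda^2$ localization gives the cubic contribution. The pieces $M_1$ and $M_2$ substitute one factor by a quadratic in $u$, turning the trilinear expression into a quadrilinear integral; splitting further according to whether the induced intermediate modulation is low or high, and using the bilinear Strichartz estimates of Section \ref{sec3} together with the spatial localization from $\psi_\lambda^2$, produces the quartic contribution $\|u\|^4_{X^s\cap X^s_{le}}$. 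Collecting all frequency configurations yields the stated bound $\sum_{k=3,4}\|u\|^k_{X^s\cap X^s_{le}}$.
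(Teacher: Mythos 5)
Your overall strategy is the paper's: integrate by parts to trade the output frequency $\xi$ for a derivative of $\phi$, bound the resulting symbol on dyadic input regions $\{\alpha,\lambda,\lambda\}$, and estimate the weighted trilinear space--time integral with the bilinear machinery plus reiteration. Two remarks on the execution. First, a single integration by parts leaves the symbol $m=\sum_iA(\xi_i)-C$ against $\phi_x=\psi_\lambda^2$, and your bound $|m|\lesssim a(\min_i|\xi_i|)$ is correct but cruder than what the paper uses: on $P_\xi$,
\[
\sum_i A(\xi_i)-C=\frac{\sum A(\xi_i)\xi_i\,\xi^2-3\sum A(\xi_i)\xi_i^2\,\xi+3\sum A(\xi_i)\xi_i^3}{3\xi_1\xi_2\xi_3},
\]
and the paper integrates by parts further on the first two pieces (producing $\phi_{xx}$, $\phi_{xxx}$), so its worst surviving symbol is $\sum A(\xi_i)\xi_i^3/(3\xi_1\xi_2\xi_3)\lesssim a(\lambda)\lambda/\alpha\le a(\alpha)$. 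Your version loses a factor $(\lambda/\alpha)^{-2s-1}$ relative to this; it still appears summable at $s\ge-\frac45$, but that needs to be checked explicitly rather than asserted by analogy with Proposition \ref{m4est}.

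The genuine gap is that you have misidentified the hard configuration. Since $\hat\phi_\lambda$ is supported at frequencies $O(\lambda^{-4s-5})\lesssim 1$, the output frequency obeys $|\xi_1+\xi_2+\xi_3|\lesssim1$, so in the all-comparable case $\{\lambda,\lambda,\lambda\}$ the resonance identity gives $\xi_1^3+\xi_2^3+\xi_3^3\approx3\xi_1\xi_2\xi_3\approx\lambda^3$, which \emph{forces} modulation $\gtrsim\lambda^3$ on some factor. The configuration ``all frequencies comparable and no factor carries high modulation'' is therefore vacuous, and the comparable case is dispatched directly by the $L^2$ bound (\ref{l2hmodesamefrequency}) paired with an $L^2$ bound on the remaining factor --- no reiteration is needed there. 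Where Lemma \ref{backflow} is genuinely required is the high--low case $\alpha\ll\lambda$ when the low-frequency factor $\ua$ sits at high modulation (in $Z$), for which no direct bilinear estimate against $\ul\in X^1$ exists; this is exactly what is packaged into (\ref{l2xhighs}) and hence into (\ref{sumupl2}) of Proposition \ref{L2bilinear}, and it is this case --- not the resonant one --- that generates the quartic term $\|u\|^4_{\xx}$. As written, your plan applies the reiteration to an empty case while labelling the high--low interaction ``easy'' and purely cubic, so it would miss the one estimate that actually needs the $M_1+M_2+R$ decomposition. The fix is a reorganization rather than a new idea: run your reiteration argument on the $\alpha\ll\lambda$, $\ua\in Z$ configuration (or simply invoke (\ref{sumupl2}), which already contains it and accounts for the $k=4$ term), and handle $\alpha\approx\lambda$ by the forced high modulation.
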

\begin{proof} As in proposition~\ref{bc}, we have
\[C=-\frac{\sum A(\xi_i)\xi_i}{3\xi_1\xi_2\xi_3}(\xi_1^2+\xi_2^2+\xi_3^2-\xi_1\xi_2-\xi_1\xi_3-\xi_2\xi_3).\]
Let's look at one term of  $\sum A(\xi)-C$,
\begin{eqnarray*}
&&A(\xi_1)+\frac{A(\xi_1)\xi_1}{3\xi_1\xi_2\xi_3}(\xi_1^2+\xi_2^2+\xi_3^2-\xi_1\xi_2-\xi_1\xi_3-\xi_2\xi_3)\\
&=&
\frac{A(\xi_1)}{3\xi_2\xi_3}[(\xi_1+\xi_2+\xi_3)^2-3\xi_1(\xi_1+\xi_2+\xi_3)+3\xi_1^2].
\end{eqnarray*}
So on $P_\xi$, we have
\begin{eqnarray*}&& \sum A(\xi)-C\\
&=&\frac{\sum A(\xi_i)\xi_i\xi^2-3\sum A(\xi_i)\xi_i^2\xi+3\sum
A(\xi_i)\xi_i^3}{3\xi_1\xi_2\xi_3}.\end{eqnarray*} When we feed it
to the integral, we can   do integration by parts to trade $\xi$
for derivative of $\phi$
\begin{eqnarray*}
\tr_{good,3}&=&-i\int_{\mathbb{R}}\phi_{xxx}(x)e^{ix\xi}\int_{P_\xi}\frac{\sum
A(\xi_i)\xi_i}{3
\xi_1\xi_2\xi_3 } \hu(\xi_1)\hu(\xi_2)\hu(\xi_3)d\xi_id\xi dx\\
&+&\int_{\mathbb{R}}\phi_{xx}(x)e^{ix\xi}\int_{P_\xi}\frac{\sum
A(\xi_i)\xi_i^2}{
\xi_1\xi_2\xi_3 }\hu(\xi_1)\hu(\xi_2)\hu(\xi_3)d\xi_id\xi dx\\
&+&i\int_{\mathbb{R}}\phi_{x}(x)e^{ix\xi}\int_{P_\xi}\frac{\sum
A(\xi_i)\xi_i^3}{ \xi_1\xi_2\xi_3
}\hu(\xi_1)\hu(\xi_2)\hu(\xi_3)d\xi_id\xi dx.
\end{eqnarray*}
Let's decompose the region into dyadic region
$\{\alpha,\lambda,\lambda\}$, $\alpha\leq \lambda$ and we can
estimate the symbols, using the fact $a\in S^s_\epsilon$, the the
proof is similar to proposition~\ref{m3}.
\[|\frac{\sum A(\xi_i)\xi_i}{3
\xi_1\xi_2\xi_3 }|\lesssim \frac{a(\alpha)}{\lambda^2},
\hspace{0.2in}|\frac{\sum A(\xi_i)\xi^2_i}{3 \xi_1\xi_2\xi_3
}|\lesssim \frac{a(\lambda)}{\alpha}, \hspace{0.2in}|\frac{\sum
A(\xi_i)\xi^3_i}{3 \xi_1\xi_2\xi_3 }|\lesssim
\frac{a(\lambda)\lambda}{\alpha}.\] The three terms in
$\tr_{good,3}$ are similar, so we only do the third term, since
that has the worst bound. Denote it as $III$
\begin{eqnarray*}
|\int_0^1 III| &\lesssim&
\frac{a(\lambda)\lambda}{\alpha}\int_0^1\int_{\mathbb{R}}\phi_x(x)\ul\ul\ua
dx dt.
\end{eqnarray*}
\textbf{Case 1.} $\alpha\ll\lambda$, put $L^2$ on one of $\ul$,
and bilinear estimate on $\ul\ua$ (\ref{sumupl2}) , also notice
$\phi_x$ is fast decaying on spatial scale $\lambda^{4s+5}$, so we
can use local energy norm to avoid interval summation. ( Based on
our computation below, we can even perform interval summation with
no difficulty.)

\begin{eqnarray*}
 |\int_0^1 III|
&\lesssim& \frac{a(\lambda)\lambda}{\alpha}\sum_{I_\lambda}\int_{I_\lambda}\int_{\mathbb{R}}\phi_x(x)\ul\ul\ua dx dt\\
&\lesssim& \sum_{I_\lambda}\frac{a(\lambda)\lambda}{\alpha}\lambda^{-4s-5}\|\el \chi^\lambda(x)\ul\|_{\txt}\|\el\clx\ul\ua\|_{\txt}\\
&\lesssim&
\frac{a(\lambda)\lambda^{-4s-4}}{\alpha}\lambda^{\frac{3}{2}+s}\max\{\lambda^{-3s-\frac{5}{2}},
\lambda^{-1-s}\alpha^{-s}\}\sum_{I_\lambda}\|\el\clx\ul\|_{\xl}^2
(\|u\|_{X^s}+\|u\|_{X^s}^2)\\
&\lesssim& \lambda^{-4s-5}\max \{\alpha^{-1},
\lambda^{2s+\frac{3}{2}}\alpha^{-1-s}\}\|\ul\|^2_{X^s_{le}}(\|u\|_{X^s}+\|u\|^2_{X^s}).
\end{eqnarray*}
\textbf{Case 2.} $\alpha\approx\lambda$, notice we have high
modulation $\sm\gtrsim \lambda^3$. Then bound $(\Qm \ul  )\ul$ in
$L^2$ (\ref{l2hmodesamefrequency}), and the other one in $L^2$.
\begin{eqnarray*}
 |\int_0^1 III|
&\lesssim& a(\lambda)\sum_{I_\lambda}\int_{I_\lambda}\int_{\mathbb{R}}\phi_x(x)(\Qm\ul)\ul  \ul dx dt\\
&\lesssim& a(\lambda)\lambda^{-3s-\frac{5}{2}}\lambda^{\frac{3}{2}+s}\lambda^{-4s-5}\|\ul\|_{X^s\cap X^s_{le}}^3\\
&\lesssim&\lambda^{-4s-6}\|\ul\|_{X^s\cap X^s_{le}}^3.
\end{eqnarray*}
\end{proof}
For the part $\tr_{bad,3}$ we can not estimate it directly, so we
will add some correction as we did before. Take
\[\td{E}_3(u)=\td{E}_2(u)+ \Lambda_{B}(u),\]
\[\Lambda_{B}(u)=-i\int_{\mathbb{R}}\phi(x)e^{ix\xi}\int_{P_\xi}B(\xi_1,\xi_2,\xi_3)\hu(\xi_1)\hu(\xi_2)\hu(\xi_3)d\xi_id\xi dx.\]
Notice (\ref{BCdecomposition}), then we have
\[\ddt\td{E}_3(u)=\tr_2(u)+\tr_{good,3}+\tr_4(u).\] Here
\[\tr_4(u)=-\int_{\mathbb{R}}\phi(x)e^{ix\xi}\int_{P_{\xi}}[B(\xi_1,\xi_2,\xi_{34})\xi_{34}]_{sym}\hu({\xi_1})
\hu({\xi_2})\hu({\xi_3})\hu({\xi_4})d\xi_id\xi dx.\] Here we need
to do two things, show $|\td{E}_3(u)|\lesssim
E^{\frac{3}{2}}_2(u)$, which is same as proposition \ref{EE} and
\ref{e3e} and \ref{localengy} . And estimate the error, which
repeats the proof of proposition~\ref{m4est} using the fact that
$\phi$ is bounded, $\hat{\phi}$ has compact support, so they does
no change to the proof, in fact, since we have the spatial
localization, we have the privilege of omitting the interval
summation by control them in local energy space.

\begin{prop} With $a, \phi$ as before, when $s\geq \so$, we have the error estimate
\[|\int_0^1\tr_4(u) dt|\lsm \|u\|_{\xx}^4(1+\|u\|_{\xx}+\|u\|^2_{\xx}).\]
\end{prop}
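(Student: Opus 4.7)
The plan is to mimic the proof of Proposition~\ref{m4est}, since the quadrilinear form $\tr_4$ has essentially the same structure as $\Lambda_4(M_4)$. Two features must be kept track of here: (i) the symbol $[B(\xi_1,\xi_2,\xi_{34})\xi_{34}]_{sym}$ replaces $M_4$, and (ii) there is a spatial weight $\phi(x)$ which is bounded, with Fourier transform compactly supported in $[-1,1]$, and which decays rapidly on the scale $\lambda^{4s+5}$ (where $\lambda$ denotes the largest dyadic frequency involved). The bound on $B$ from Proposition~\ref{bc}, namely $|B|\lesssim a(\min)\lambda^{-1}\mu^{-1}$ on the relevant dyadic block, combined with a factor of $\xi_{34}$, gives the same pointwise estimates for $[B\xi_{34}]_{sym}$ as were established for $M_4$ in Proposition~\ref{m4bound} and the remark following it.

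First I would expand $u=\sum_\lambda u_\lambda$ and fix dyadic sizes $|\xi_i|\sim\mu_i$, then split into the same four configurations as in Proposition~\ref{m4est}: $(\mu,\mu,\lambda,\lambda)$, $(\mu,\mu,\lambda,\alpha)$, $(\mu,\mu,\mu,\lambda)$ and $(\mu,\mu,\mu,\mu)$, further divided by the sizes of the pair-sums $\xi_{ij}$. In each case the multiplier bound on $[B\xi_{34}]_{sym}$ matches the bound on $M_4$ used previously, and the high-modulation relation~(\ref{himodem4}) continues to hold because it is a purely algebraic identity on the diagonal $\tau_1+\cdots+\tau_4=0$, $\xi_1+\cdots+\xi_4=0$. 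Thus the bilinear estimates from Proposition~\ref{L2bilinear} --- in particular (\ref{sumupl2}), (\ref{l2hmodesamefrequency}), (\ref{L2XXHHL}) and (\ref{l2exxx}) --- together with the trilinear bound (\ref{triple}), can be recycled case by case.

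The bonus of having the weight $\phi$ is twofold. Since $\hat\phi$ has compact support, convolution with $\hat\phi$ does not destroy the frequency localization. Since $\phi$ is spatially localized on scale $\lambda^{4s+5}$ (matching the grid used to define $X^s_{le}$), I would peel off one factor, say $u_\lambda$, and pair $\chi^\lambda(x)u_\lambda$ with the localized $\phi$, using the local energy norm $X^s_{le}$ rather than $X^s$ whenever one would otherwise struggle to sum over time intervals of size $\lambda^{4s+3}$; this is exactly the device that was used in Case 1 of the $\tr_{good,3}$ bound. The remaining trilinear piece is then estimated by the product of a bilinear $L^2_{t,x}$ estimate with an $L^\infty$-type bound, or by the triple $L^2$ bound of (\ref{triple}), exactly as in Proposition~\ref{m4est}.

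The main obstacle, as before, is the balanced case $(\mu,\mu,\mu,\mu)$, especially the sub-cases where some $|\xi_{ij}|\ll\mu$. There the pointwise bound on the multiplier improves to $a(\xi_{ij})|\xi_{ij}||\xi_{ik}|/\mu^3$, and one must exploit the frequency-separation bilinear estimate (\ref{l2exxx}) together with (\ref{L2XXHHL}). A direct exponent count, identical to the one in Case 4 of Proposition~\ref{m4est}, gives a factor $\mu^{-8s-7}$ after using the time-interval size $\mu^{-4s-3}$, which is summable exactly at $s\geq -\tfrac78$, hence comfortably within the hypothesis $s\geq\so=-\tfrac45$. The cubic and quartic powers of $\|u\|_{\xx}$ appear from reiterating the equation whenever a high-modulation $Z$-piece is absorbed via Lemma~\ref{backflow}, producing the factor $(1+\|u\|_{\xx}+\|u\|_{\xx}^2)$ in the final estimate.
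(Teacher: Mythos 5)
Your proposal is correct and follows essentially the same route as the paper: the paper's own (very terse) argument is precisely to repeat the proof of Proposition~\ref{m4est} verbatim, noting that $[B(\xi_1,\xi_2,\xi_{34})\xi_{34}]_{sym}$ obeys the same multiplier bounds as $M_4$ because $B$ is the analogue of $b$ from Proposition~\ref{bc} with $a$ replaced by $A\in S(a)$, that the boundedness and compact Fourier support of $\phi$ leave the frequency analysis unchanged, and that the spatial localization of $\phi$ permits the local energy space to absorb the interval summation. Your case-by-case recycling of (\ref{sumupl2}), (\ref{l2hmodesamefrequency}), (\ref{triple}) and the frequency-separation estimate, together with the exponent counts, matches the paper's intent exactly.
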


Combining all the propositions in this section, we get the local
energy bound.
\begin{lemma} The solution to the KdV
equation (\ref{kdv}) satisfy the following bound \ben
&&\sum_{\lambda}\lambda^{-2s-5}\sup_j\|\chi_j^{\lambda}\partial_x
u_{\lambda}\|^2_{L^2_{x,t}}
\\
&\lesssim&
\sup_{t}\|u(t)\|^2_{H^s}(1+\|u(t)\|_{H^s})+\|u\|^3_{X^s\cap
X^s_{le}}+\|u\|_{\xx}^4(1+\|u\|_{\xx}+\|u\|^2_{\xx}).\een
\end{lemma}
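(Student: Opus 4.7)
The plan is to close the argument using the modified energy $\td{E}_3(u)=\td{E}_2(u)+\Lambda_B(u)$ constructed above. Take the symbol $a(\xi)=(1+\xi^2)^s\in S^s_\epsilon$, so that $E_2(u)\approx \|u\|^2_{H^s}$, and, because $\psi_\lambda^2(x)=\lambda^{-4s-5}\chi^2(x/\lambda^{4s+5})$ together with the factor $\ta(D)^{1/2}D$ contributes $\lambda^{2s+2}$ on the dyadic piece $\ul$, the quadratic form on each dyadic piece satisfies
\[
\|\psi_\lambda\,\ta(D)^{1/2}D\,\ul\|_{L^2_x}^2 \;\approx\; \lambda^{-2s-5}\|\chi^\lambda(x)\partial_x\ul\|_{L^2_x}^2 .
\]
The position of the bump $\chi$ is arbitrary in $\phi_\lambda$, so by translating the center of $\chi$ to $j\lambda^{4s+5}$ we may replace $\chi^\lambda(x)$ on the right-hand side by any $\chi_j^\lambda(x)$; in particular we can insert the $\sup_j$ in the final statement.

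Next, I would integrate the identity $\ddt\td{E}_3(u)=\tr_2(u)+\tr_{good,3}(u)+\tr_4(u)$ over $[0,1]$. Combined with Proposition \ref{localengy}, which gives
\[
\|\psi_\lambda\,\ta(D)^{1/2}D\,\ul\|_{L^2_x}^2 \;\lsm\; \tr_2(\ul)+c\,E_2(\ul),
\]
this yields
\[
\int_0^1\|\psi_\lambda\,\ta(D)^{1/2}D\,\ul\|_{L^2_x}^2\,dt \;\lsm\; \bigl|\td{E}_3(u)\bigr|_0^1 + \int_0^1 E_2(u)\,dt + \Bigl|\int_0^1\tr_{good,3}\,dt\Bigr| + \Bigl|\int_0^1\tr_4\,dt\Bigr|.
\]
The endpoint term $|\td{E}_3(u)|$ is controlled, as noted in the paper, by $E_2(u)+E_2(u)^{3/2}\lsm \|u(t)\|^2_{H^s}(1+\|u(t)\|_{H^s})$, imitating Propositions \ref{EE} and \ref{e3e} (the symbol of $\Lambda_B$ is essentially $\sigma_3$ localized in space by the bounded $\phi_\lambda$, and its Schwartz kernel acts boundedly on the appropriate $\ta(D)^{1/2}$-weighted space). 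The $E_2(u)$ time integral is also trivially bounded by $\sup_t\|u(t)\|^2_{H^s}$.

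For the two nonlinear error terms, the estimates are already in hand: $\bigl|\int_0^1\tr_{good,3}\,dt\bigr|\lsm\sum_{k=3,4}\|u\|^k_{\xx}$ by the proposition just above, and $\bigl|\int_0^1\tr_4\,dt\bigr|\lsm\|u\|^4_{\xx}(1+\|u\|_{\xx}+\|u\|^2_{\xx})$ by the proposition immediately preceding the lemma. Taking the $\sup_j$ on the left via translation, and then summing the resulting dyadic inequality in $\lambda$ (the summation is already built into both the local-energy norm on the left and the $\xx$ norms on the right), yields the stated estimate.

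The main obstacle is purely bookkeeping: the nonlinear estimates have all been proved, so what remains is to verify the two algebraic facts above, namely $\|\psi_\lambda\ta(D)^{1/2}D\ul\|^2_{L^2_x}\approx\lambda^{-2s-5}\|\chi_j^\lambda\partial_x\ul\|^2_{L^2_x}$ (which follows from the choice of scale $\lambda^{4s+5}$ in $\psi_\lambda$ and Fourier support considerations for the commutator, as in Proposition \ref{localengy}), and the bound on $|\td E_3(u)|$, for which the new piece is controlling $\Lambda_B(u)$. Here one dyadically decomposes the trilinear form, uses $B$ has the same size as $\sigma_3$ on the dyadic region $\{\alpha,\lambda,\lambda\}$ (Proposition \ref{bc}), and repeats the Bernstein computation in Proposition \ref{e3e}; the boundedness of $\phi_\lambda$ and the compact Fourier support of $\chi$ ensure that the spatial cutoff does not hurt the estimate.
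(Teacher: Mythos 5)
Your proposal is correct and follows essentially the same route as the paper: integrate $\ddt\td{E}_3=\tr_2+\tr_{good,3}+\tr_4$ over $[0,1]$, control the endpoint values of $\td{E}_3$ via Propositions \ref{EE} and \ref{e3e} (the symbol $B$ has the same size as $\sigma_3$), extract the local energy from $\tr_2$ via Proposition \ref{localengy}, and invoke the two error propositions for $\tr_{good,3}$ and $\tr_4$. Your explicit treatment of the normalization $\|\psi_\lambda \ta(D)^{1/2}D\ul\|^2_{L^2_x}\approx\lambda^{-2s-5}\|\chi_j^\lambda\partial_x\ul\|^2_{L^2_x}$ and of the translation in $j$ needed for the $\sup_j$ is a detail the paper leaves implicit, and is handled correctly.
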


\section{Finishing the proof}
To finish the whole argument, we  need to pick suitable symbol
$a(\xi)$ in the previous two sections.  As in \cite{KT1}, we pick
slow varying sequence.
\[\beta_{\lambda}^0=\frac{\lambda^{2s}\|u_{0\lambda}\|^2_{L^2}}{\|u_0\|^2_{H^s}},\]
\[\beta_\lambda=\sum_{\mu}2^{-\frac{\epsilon}{2}|\log\lambda-\log\mu|}\beta_{\mu}^0.\]
These $\beta_\lambda$ satisfy the following property
\begin{enumerate}[(i)]
\item $\lambda^{2s }\|u_{0\lambda}\|^2_{L^2}\lsm\beta_{\lambda}
\|u_0\|^2_{H^s},$\\ \item $\sum\beta_\lambda\lsm 1$,\\
\item $\beta_\lambda$ is slow varying in the sense that
\beq\label{slowvarying}|\log_2{\beta_\lambda}-\log_2{\beta_\mu}|\lsm
\frac{\epsilon}{2}|\log_2\lambda-\log_2\mu|.\eeq
\end{enumerate}Now if we take
$a_\lambda=\lambda^{2s}\max(1,\beta_{\lambda_0}^{-1}2^{-\epsilon|\log_2\lambda-\log_2{\lambda_0}|})$,
and correspondingly we take \[a(\xi)\approx
a_\lambda,\hspace{0.2in} |\xi|\approx\lambda\] Then from the slow
varying property (\ref{slowvarying}), we get
\[\sum_{\lambda}a_\lambda\|u_{0\lambda}\|_{L^2_x}^2\lsm \sum_{\lambda}\lambda^{2s}\|u_{0\lambda}\|_{L^2_x}^2
+2^{-\epsilon|\log_2\lambda-\log_2{\lambda_0}|}
\lambda^{2s}\beta_{\lambda_0}^{-1}\|u_{0\lambda}\|_{L^2_x}^2\lsm
\|u_0\|^2_{H^s}.\] Assume that $\|u\|_{\lh}\ll 1,$ which implies
$\sup_tE_2(u(t))\ll 1$. Recall that
\[\frac{d}{dt}(E_2(u)+\Lambda_3(\sigma_3))=\Lambda_4(M_4),\]
so from  Proposition \ref{e3e} and \ref{m4est}, we get
\[(\sum_{\lambda}a(\lambda)\|u_\lambda(t)\|_{L^2_x}^2)^\frac{1}{2}\lsm \|u_0\|_{H^s}+\|u\|_{\xx}^4(1+\|u\|_{\xx}+\|u\|^2_{\xx}).\]
 At fixed frequency $\lambda=\lambda_0$, we get \[\sup_{t}\lambda_0^s\|u_{\lambda_0}(t)\|_{L^2}\lsm \beta_{\lambda_0}^{\frac{1}{2}}(\|u_0\|_{H^s}+\|u\|_{\xx}^4(1+\|u\|_{\xx}+\|u\|^2_{\xx})).\]
From the property of $\beta_\lambda$, we can sum up $\lambda_0$,
and get (\ref{energybound}).

Together with the previous section, we can prove the local energy
bound in exactly the same way, so we conclude the proof of
proposition~\ref{energypropagation}.

\vspace{.2in} \textbf{Acknowledgement}. The author would like to
thank his   advisor, Daniel Tataru,  for suggesting the problem
and for all the guidance and encouragement along the way.

\end{document}